\documentclass{amsart}

\usepackage{amssymb}
\usepackage{amsmath}
\usepackage{verbatim}
\usepackage{url}
\usepackage{graphicx}
\usepackage{color}

\usepackage[foot]{amsaddr}

\usepackage{mathrsfs}

\newcommand{\eps}{\varepsilon}

\newcommand{\opnm}{\operatorname}

\newcommand{\Bff}{\mathbf}
\newcommand{\BigO}{\mathcal{O}}

\newcommand{\supp}{\operatorname{supp}}
\newcommand{\shift}{\mathcal{S}}
\newcommand{\ccc}{\rho_1}
\newcommand{\sss}{\rho_2}

\DeclareFontFamily{U}{wncy}{}
\DeclareFontShape{U}{wncy}{m}{n}{<->wncyr10}{}
\DeclareSymbolFont{mcy}{U}{wncy}{m}{n}
\DeclareMathSymbol{\Sh}{\mathord}{mcy}{"58} 
\DeclareMathSymbol{\sha}{\mathord}{mcy}{"78} 
\newcommand{\comb}{\sha}

\newtheorem{theorem}{Theorem}[section]
\newtheorem{proposition}[theorem]{Proposition}
\newtheorem{corollary}[theorem]{Corollary}
\newtheorem{lemma}[theorem]{Lemma}

\theoremstyle{definition}
\newtheorem{definition}[theorem]{Definition}

\theoremstyle{remark}
\newtheorem{remark}[theorem]{Remark}
\newtheorem{example}[theorem]{Example}

\newtheorem*{acknowledgements}{Acknowledgements}
\newcommand{\fnJ}[1]{}

\numberwithin{equation}{section}
\numberwithin{figure}{section}

\newcommand{\ii}{\textnormal{i}}
\newcommand{\dd}{\textnormal{d}}
\newcommand{\ee}{\textnormal{e}}

\title[Dirac brushes]{Dirac brushes (or, the fractional Fourier transform of Dirac combs)}

\author{Joe Viola}

\email{Joseph.Viola@univ-nantes.fr}

\address{Universit\'e de Nantes, CNRS, Laboratoire de Math\'ematiques Jean Leray (LMJL - UMR 6629), F-44000 Nantes, France}

\begin{document}

\begin{abstract}
In analogy with the Poisson summation formula, we identify when the fractional Fourier transform, applied to a Dirac comb in dimension one, gives a discretely supported measure. We describe the resulting series of complex multiples of delta functions, and through either the metaplectic representation of $SL(2,\Bbb{Z})$ or the Bargmann transform, we see that the the identification of these measures is equivalent to the functional equation for the Jacobi theta functions. In tracing the values of the antiderivative in certain small-angle limits, we observe Euler spirals, and on a smaller scale, these spirals are made up of Gauss sums which give the coefficient in the aforementioned functional equation.
\end{abstract}

\maketitle

\setcounter{tocdepth}{1}
\tableofcontents

\section{Introduction}

\subsection{Setting and main results}

For $r > 0$, we consider the Dirac comb
\begin{equation}\label{eq_def_comb}
	\comb_r(x) = \sqrt{r}\sum_{k \in \Bbb{Z}} \delta(x - rk)
\end{equation}
as a distribution in the dual Schwartz space $\mathscr{S}'(\Bbb{R})$, meaning that the Dirac comb is defined via its action $f \mapsto \sqrt{r}\sum_{k\in\Bbb{Z}}f(rk)$ on rapidly decaying smooth functions. We write the Fourier transform as
\begin{equation}\label{eq_def_Fourier}
	\mathcal{F}f(\xi) = \ii^{-1/2}\int \ee^{-2\pi\ii x \xi} f(x)\,\dd x
\end{equation}
(with the convention $\ii^{-1/2} = \sqrt{-\ii} = \ee^{-\pi\ii/4}$); the normalizations for $\mathcal{F}$ and $\comb_r$ are both chosen to respect the metaplectic representation (Section \ref{ss_metapl}). 

The Poisson summation formula is, in the sense of distributions,
\begin{equation}\label{eq_Poisson_original}
	\mathcal{F} \comb_r(x) = \ii^{-1/2} \comb_{1/r}(x).
\end{equation}
Since $\mathcal{F}\comb_{r}(x) = (\ii / r)^{-1/2} \sum_{k\in\Bbb{Z}} \ee^{-2\pi\ii rkx}$, the Poisson summation formula implies remarkable cancellations for the oscillating functions $\ee^{-2\pi\ii rkx}$, in that their sum vanishes on $\Bbb{R}\backslash \frac{1}{r}\Bbb{Z}$ in the distributional sense. We can interpret this cancellation as arising after rotating the Dirac comb by an angle of $-\pi/2$ in phase space, which is the action of the Fourier transform. We can rotate a function in phase space by other angles using the fractional Fourier transform.

\begin{definition}\label{def_FrFT}
For $\alpha \in (-2, 2) \backslash \{0\}$, the fractional Fourier transform has integral kernel
\begin{equation}\label{eq_def_op_FrFT}
	\mathcal{F}^\alpha f(x) = \frac{1}{\sqrt{\ii \sin(\pi\alpha/2)}}\int \ee^{\frac{\pi\ii}{\sin(\pi\alpha/2)}(\cos(\pi\alpha/2)(x^2 + y^2) - 2xy)}f(y)\,\dd y.
\end{equation}
Our convention for the square root is that the real part should be positive. Naturally, $\mathcal{F}^0 f = f$, and one extends the definition to any $\alpha \in \Bbb{R}$ via $\mathcal{F}^2 f(x) = -\ii f(-x)$. Therefore $\mathcal{F}^{\alpha + 2k}f(x) = \ee^{-\frac{\pi\ii}{2}k}\mathcal{F}^\alpha f((-1)^k x)$ for any $k \in \Bbb{Z}$ and $\alpha \in (-2, 2)$.
\end{definition}

This work is devoted to answering the question of whether cancellations occur when rotating a Dirac comb in phase space by other angles. A geometrically natural answer, which happens to be correct, is that cancellations occur if and only if the rotation by $-\frac{\pi \alpha}{2}$ of the lattice $r\Bbb{Z} \times \frac{1}{r}\Bbb{Z}$ gives a discrete set when projected onto the first coordinate, which may be phrased as follows.

\begin{theorem}\label{thm_support}
For $\alpha \in \Bbb{R}$ and $r > 0$, the fractional Fourier transform (Definition \ref{def_FrFT}) of the Dirac comb \eqref{eq_def_comb}, $\mathcal{F}^\alpha \comb_r$, has discrete support if and only if $r \cos \frac{\pi\alpha}{2}$ and $\frac{1}{r}\sin \frac{\pi\alpha}{2}$ are linearly dependent over $\Bbb{Z}$. Otherwise, $\opnm{supp}\mathcal{F}^\alpha \comb_r = \Bbb{R}$.
\end{theorem}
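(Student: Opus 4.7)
The plan is to express $\mathcal{F}^\alpha \comb_r$ as a nowhere-vanishing Gaussian prefactor times a classical theta-type series in $\mathscr{S}'(\Bbb{R})$, and to analyze the support of that series directly.

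First I would reduce to the generic range $\alpha \in (-2, 2) \setminus \{-1, 0, 1\}$, since for $\alpha \in 2\Bbb{Z}$ the transform $\mathcal{F}^\alpha \comb_r$ is simply $\pm\ii^{-k/2}\comb_r$, and at $\alpha \equiv \pm 1 \pmod{2}$ the kernel in Definition \ref{def_FrFT} collapses to that of the ordinary Fourier transform, so \eqref{eq_Poisson_original} gives a discrete comb (all of these cases are consistent with the linear-dependence criterion). On the remaining range, applying the kernel termwise to \eqref{eq_def_comb} gives
\[
\mathcal{F}^\alpha \comb_r(x) = \frac{\sqrt{r}}{\sqrt{\ii\sin(\pi\alpha/2)}}\, \ee^{\pi\ii \cot(\pi\alpha/2)\,x^2}\, \Theta\!\bigl(\tfrac{r x}{\sin(\pi\alpha/2)}\bigr),
\]
where, with $\tau = r^2 \cot(\pi\alpha/2)$,
\[
\Theta(y) = \sum_{k\in\Bbb{Z}} \ee^{\pi\ii \tau k^2 - 2\pi\ii k y} \in \mathscr{S}'(\Bbb{R}).
\]
Since the Gaussian prefactor is nowhere zero and $x \mapsto r x/\sin(\pi\alpha/2)$ is a diffeomorphism, $\supp\mathcal{F}^\alpha\comb_r$ corresponds to $\supp\Theta$. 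On this range the linear dependence of $r\cos(\pi\alpha/2)$ and $\tfrac{1}{r}\sin(\pi\alpha/2)$ over $\Bbb{Z}$ is equivalent to $\tau \in \Bbb{Q}$.

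Second, for the discrete direction I assume $\tau = p/q$ in lowest terms, $q > 0$, and split the summation index as $k = qn + j$ with $0 \leq j < q$. The cross term $2\pi\ii\tau q n j = 2\pi\ii p n j$ vanishes modulo $2\pi\ii\Bbb{Z}$, producing
\[
\Theta(y) = \sum_{j=0}^{q-1} \ee^{\pi\ii p j^2/q - 2\pi\ii j y} \sum_{n\in\Bbb{Z}} \ee^{\pi\ii p q n^2 - 2\pi\ii q n y}.
\]
The inner sum is $\sum_n \ee^{-2\pi\ii q n y}$ or $\sum_n(-1)^n \ee^{-2\pi\ii q n y}$ according to the parity of $pq$, and each is, by \eqref{eq_Poisson_original}, a discrete comb supported in $\frac{1}{q}\Bbb{Z}$ or in the half-lattice $\frac{1}{q}(\Bbb{Z} + \tfrac12)$. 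Hence $\Theta$, and so $\mathcal{F}^\alpha\comb_r$, is a locally finite sum of delta functions.

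For the converse, when $\tau$ is irrational I would combine the manifest period-$1$ invariance $\Theta(y+1) = \Theta(y)$ with the quasi-periodicity
\[
\Theta(y+\tau) = \ee^{-\pi\ii\tau - 2\pi\ii y}\,\Theta(y),
\]
obtained by the substitution $k \mapsto k+1$ in the defining series; both identities are valid in $\mathscr{S}'(\Bbb{R})$. Assume for contradiction that $\Theta|_{(a,b)} = 0$ on some nonempty open interval. Since the two multipliers are nowhere-vanishing smooth functions, $\Theta$ also vanishes on every translate $(a,b) + n + m\tau$, $(m,n)\in\Bbb{Z}^2$. Irrationality of $\tau$ makes $\{n + m\tau\}$ dense in $\Bbb{R}$, so these translates of a fixed-length interval cover all of $\Bbb{R}$ and $\Theta = 0$; but the $k = 0$ Fourier coefficient of $\Theta$ equals $1$, a contradiction. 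Hence $\supp\Theta = \Bbb{R}$. The main obstacle I anticipate is purely one of rigor at the distributional level: convergence of the defining series in $\mathscr{S}'(\Bbb{R})$ (its coefficients are bounded), legitimacy of termwise shifts, and preservation of support under multiplication by smooth nowhere-vanishing functions. None is deep, but the density-of-orbits step is the substantive point in the converse direction.
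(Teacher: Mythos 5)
Your proof is correct, and it takes a genuinely different route from the paper. The paper's argument is purely structural: it never evaluates the kernel, but instead transfers the shift-invariance $\mathcal{S}_{(jr,k/r)}\comb_r = \ee^{-\pi\ii jk}\comb_r$ through the Egorov relation \eqref{eq_Egorov_F}, obtaining translation-invariance of $\supp\mathcal{F}^\alpha\comb_r$ under the group $\{jr\cos\frac{\pi\alpha}{2}+\frac{k}{r}\sin\frac{\pi\alpha}{2}\}_{(j,k)\in\Bbb{Z}^2}$; Dirichlet's approximation theorem then gives density in the independent case, and in the dependent case a pure momentum shift $\mathcal{S}_{(0,-s)}$ yields a nowhere-locally-vanishing factor $\ee^{-2\pi\ii(sx-ab/2)}-1$ whose zero set confines the support. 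Your argument is computational: you unfold the Mehler kernel against the comb to produce the explicit theta series $\Theta(y)=\sum_k\ee^{\pi\ii\tau k^2-2\pi\ii ky}$ with $\tau=r^2\cot\frac{\pi\alpha}{2}$, reduce the dichotomy to $\tau\in\Bbb{Q}$ versus $\tau\notin\Bbb{Q}$, use the residue decomposition $k=qn+j$ plus Poisson summation for the rational case, and for the irrational case exploit the two quasi-periodicities $\Theta(y+1)=\Theta(y)$ and $\Theta(y+\tau)=\ee^{-\pi\ii\tau-2\pi\ii y}\Theta(y)$ together with density of $\Bbb{Z}+\tau\Bbb{Z}$ and non-vanishing of the zeroth Fourier coefficient. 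Structurally these are close cousins --- in both cases the irrational direction rests on translating an open set of vanishing around a dense subgroup --- but your version works at the level of the explicit series, which makes the rational case more hands-on (one sees the Gauss-sum weights emerge immediately from the inner/outer split) at the cost of treating the degenerate angles $\alpha\in\Bbb{Z}$ by hand, whereas the paper's Egorov-based version is uniform in $\alpha$ and fits more seamlessly with the metaplectic machinery used in the rest of the paper (indeed \eqref{eq_thm_support_proof} is reused directly in the proof of Theorem \ref{thm_identify}). One small remark: in the rational case you only need that $\supp\Theta$ is \emph{contained} in a discrete closed set, which your decomposition gives; you do not need to rule out cancellation shrinking the support further, and your write-up correctly does not claim to.
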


In the case where $r \cos \frac{\pi\alpha}{2}$ and $\frac{1}{r}\sin \frac{\pi\alpha}{2}$ are linearly dependent over $\Bbb{Z}$, one can deduce (Remark \ref{rem_rho_s} below) that there exist $a,b\in\Bbb{Z}$ relatively prime such that
\begin{equation}\label{eq_def_rho_s}
	\ee^{\frac{\pi\ii \alpha}{2}} = \frac{1}{s}\left(\frac{a}{r} + \ii br\right), \quad s = \sqrt{(a/r)^2 + (br)^2}.
\end{equation}

We show that when $\mathcal{F}^\alpha \comb_r$ has discrete support, it is a Dirac comb multipled by an oscillating Gaussian factor $\ee^{-\pi\ii t x^2}$ and with a possible half-integer shift in phase space described in \eqref{eq_def_shift}. When $c,d\in\Bbb{Z}$ are such that $ad - bc = 1$, this Gaussian factor depends on
\begin{equation}\label{eq_def_t}
	t = \frac{ac}{r^2} + bdr^2.
\end{equation}
(The choice of $(c,d)$ is not unique but this does not change the result; see Remark \ref{rem_t_rep}.) Because of this Gaussian factor, the delta-functions which make up $\mathcal{F}^\alpha \comb_r$ point in many directions in the complex plane, so their sum resembles a brush instead of a comb. (See Figure \ref{fig_intro}.)

\begin{theorem}\label{thm_identify}
Let $\alpha \in \Bbb{R}$ and $r > 0$ be such that, as in \eqref{eq_def_rho_s}, $\ee^{\frac{\pi\ii}{2}\alpha} = \frac{1}{s}(a/r + \ii br)$ for $a, b \in \Bbb{Z}$ relatively prime and $s > 0$. Let $c, d \in\Bbb{Z}$ be such that $ad - bc = 1$, and let $t = ac/r^2 + bdr^2$ as in \eqref{eq_def_t}. Recall the definitions of the Dirac comb $\comb_r$ in \eqref{eq_def_comb} and the fractional Fourier transform $\mathcal{F}^\alpha$ in Definition \ref{def_FrFT}.

Then there exists some $\mu_0 \in \Bbb{C} \backslash \{0\}$ such that
\begin{equation}\label{eq_thm_identify}
	\mathcal{F}^\alpha \comb_r(x) = \mu_0 \ee^{-\pi\ii t x^2 - \pi\ii cd s x}\comb_{1/s}(x-\frac{ab}{2s}).
\end{equation}
\end{theorem}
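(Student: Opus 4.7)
\emph{Proof proposal.} The plan is to realize $\mathcal{F}^\alpha \comb_r$ via the metaplectic representation, factoring the associated symplectic transformation so as to reduce the statement to identifying $\mathcal{M}_M \comb_1$ for the matrix $M = \bigl(\begin{smallmatrix} a & b \\ c & d\end{smallmatrix}\bigr) \in SL(2,\Bbb{Z})$. Writing $\comb_r = U_r \comb_1$ with $U_r f(x) = r^{-1/2} f(x/r)$, the rescaling $U_r$ corresponds to $D_r = \opnm{diag}(r, 1/r)$, and $\mathcal{F}^\alpha U_r$ to $R_\alpha D_r$, where $R_\alpha$ is the clockwise rotation of phase space by $\pi\alpha/2$. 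From \eqref{eq_def_rho_s} one obtains $R_\alpha D_r = s^{-1}\bigl(\begin{smallmatrix} a & b \\ -br^2 & a/r^2\end{smallmatrix}\bigr)$, after which a direct calculation using $ad - bc = 1$ and the definition \eqref{eq_def_t} of $t$ yields
\[
R_\alpha D_r \begin{pmatrix} d & -b \\ -c & a \end{pmatrix} = \begin{pmatrix} 1 & 0 \\ -t & 1 \end{pmatrix}\begin{pmatrix} 1/s & 0 \\ 0 & s \end{pmatrix},
\]
that is $R_\alpha D_r = G_t D_{1/s} M$, with $G_t = \bigl(\begin{smallmatrix} 1 & 0 \\ -t & 1\end{smallmatrix}\bigr)$ the symplectic symbol of multiplication by $\ee^{-\pi\ii t x^2}$. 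Lifting to the metaplectic group and absorbing the resulting cocycle phase into a constant $\mu \in \Bbb{C} \backslash \{0\}$ gives
\[
\mathcal{F}^\alpha \comb_r = \mu\, \ee^{-\pi\ii t x^2}\, U_{1/s}\, \mathcal{M}_M \comb_1,
\]
so the theorem reduces to identifying $\mathcal{M}_M \comb_1$ for $M \in SL(2,\Bbb{Z})$.

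For this key identity, the approach is Heisenberg covariance. The Dirac comb $\comb_1$ is a simultaneous eigendistribution of all Weyl translations $W(k, l)$, $(k, l) \in \Bbb{Z}^2$, with eigenvalue $\ee^{\pi\ii kl}$. Using the metaplectic intertwining $\mathcal{M}_M W(z) = W(Mz) \mathcal{M}_M$, the distribution $\mathcal{M}_M \comb_1$ is likewise a joint eigendistribution of this family, with eigenvalue $\ee^{\pi\ii k' l'}$ where $(k', l')^T = M^{-1}(k, l)^T$; expanding and using $ad - bc = 1$ together with the congruences $k^2 \equiv k$, $l^2 \equiv l \pmod{2}$ simplifies the exponent to show the eigenvalue is $\ee^{\pi\ii kl}(-1)^{cd\,k + ab\,l}$. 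A direct check confirms that the candidate $g(x) := \ee^{-\pi\ii cd x} \comb_1(x - ab/2)$ has exactly the same eigenvalues, via the quasi-periodicity $g(x - 1) = (-1)^{cd} g(x)$ and the evaluation of $\ee^{2\pi\ii l x}$ on its support $\Bbb{Z} + ab/2$.

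The main obstacle---and the point at which the functional equation for the Jacobi theta function (promised in the abstract) enters---is the uniqueness, up to scalar, of such a joint Weyl eigendistribution for a prescribed character of $\Bbb{Z}^2$. This one-dimensionality can be extracted from the Stone--von Neumann theorem, or proven directly: the $W(0, 1)$-eigenvalue forces the support into $\Bbb{Z} + ab/2 \pmod{1}$, after which the $W(1, 0)$-eigenvalue determines all coefficients from a single value by quasi-periodicity. Granted this, $\mathcal{M}_M \comb_1 = \lambda_M\, g$ for some $\lambda_M \in \Bbb{C}\backslash\{0\}$. Finally, applying $U_{1/s}$ (using $U_{1/s} f(x) = s^{1/2} f(sx)$) a short calculation gives $U_{1/s} g(x) = \ee^{-\pi\ii cd s x}\comb_{1/s}(x - ab/(2s))$, and combining with the Gaussian prefactor produces \eqref{eq_thm_identify} with $\mu_0 = \mu \lambda_M$.
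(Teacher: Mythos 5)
Your argument is correct and coincides in substance with the paper's metaplectic reformulation (Theorem \ref{thm_id_metapl} together with Lemma \ref{lem_M_Z} and Lemma \ref{lem_id_u1}), rather than with the direct proof given in Section \ref{ss_proofs_elementary}. The direct proof deduces the support from the Egorov relation \eqref{eq_Egorov_F}, obtains the series \eqref{eq_series_gammak}, and then solves the resulting two-term recursion for the coefficients $\gamma_k$ explicitly; your route instead factors the symplectic matrix as $\Bff{F}_\alpha \Bff{V}_{1/r} = \Bff{W}_{-t}\Bff{V}_s \Bff{M}$ (your $R_\alpha D_r = G_t D_{1/s} M$, which is exactly Lemma \ref{lem_M_Z} read backwards), reduces to identifying $\mathcal{M}(\Bff{M})\comb_1$, and then compares Weyl-translation eigenvalues of $\mathcal{M}(\Bff{M})\comb_1$ and of your candidate $g$ and invokes uniqueness of the joint eigendistribution. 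These two strategies are close cousins: the explicit recursion in the direct proof is exactly the content of the uniqueness lemma specialized to $\comb_1$. The reformulation you chose has the advantage of separating ``existence of $\mu_0$'' cleanly from ``identification of $\mu_0$'' and of making the role of $SL(2,\Bbb{Z})$ transparent, at the cost of needing the matrix factorization as a separate step.

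One misattribution worth flagging: you write that the uniqueness of a joint Weyl eigendistribution ``is the point at which the functional equation for the Jacobi theta function enters.'' That is not right. The uniqueness is the elementary distributional Lemma \ref{lem_id_u1} (a distribution annihilated by $\ee^{2\pi\ii x}-1$ is a sum of delta functions by \cite[Thm.~3.1.16]{Hormander_ALPDO1}, and the remaining invariance forces the coefficients to be constant), and you in fact sketch precisely this elementary argument in the next sentence. The theta functional equation is instead equivalent to Theorem \ref{thm_mu_8}, the identification of $\mu_0$ as a specific eighth root of unity, which is a separate statement that Theorem \ref{thm_identify} does not require. Your proof of Theorem \ref{thm_identify} is therefore self-contained and does not depend on theta-function theory; it only establishes existence of some nonzero $\mu_0$, which is exactly what is asked.
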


We separately identify the coefficient $\mu_0$ in Section \ref{s_symmetries}, which corresponds to the functional equation for Jacobi theta functions as described in Section \ref{ss_theta}.

\begin{theorem}\label{thm_mu_8}
The constant $\mu_0$ in Theorem \ref{thm_identify} is an eighth root of unity which depends only on $a,b,c,d$ and whether $\alpha \in (-2, 2]$ modulo 8.
\end{theorem}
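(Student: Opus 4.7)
To prove Theorem \ref{thm_mu_8}, my plan is to split the argument into showing $|\mu_0| = 1$ and then $\mu_0^8 = 1$, the latter reducing to the modular multiplier of the Jacobi theta function.

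The modulus $|\mu_0| = 1$ should come from the unitarity of $\mathcal{F}^\alpha$ on $L^2(\Bbb{R})$ together with the normalization of $\comb_r$ in \eqref{eq_def_comb}: the squared ``density'' of $\comb_r$ equals $1$ independent of $r$, so heuristically $\|\mathcal{F}^\alpha\comb_r\|^2 = \|\comb_r\|^2$ forces $|\mu_0|^2 = 1$. I would make this rigorous by pairing both sides of \eqref{eq_thm_identify} with a Schwartz bump $\phi_L(x)=\phi(x/L)$ that covers many periods of both $\comb_r$ and $\comb_{1/s}$, and sending $L\to\infty$, so that the averaged squared-modulus densities on the two sides must agree.

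For $\mu_0^8=1$, I would pass through the metaplectic representation of Section \ref{ss_metapl}. Writing $\mathcal{F}^\alpha$ as the metaplectic lift of the rotation $R_{-\pi\alpha/2}$ and $U_r\colon f(x)\mapsto\sqrt r\,f(rx)$ (which sends $\comb_1$ to $\comb_r$) as the lift of $\opnm{diag}(1/r,r)$, and using $\ee^{\pi\ii\alpha/2}=(a/r+\ii br)/s$ from \eqref{eq_def_rho_s} together with $ad-bc=1$, the classical symbol of $\mathcal{F}^\alpha U_r$ should factor in $SL(2,\Bbb{R})$ as $N\cdot M$, where $M=\begin{pmatrix} a & b \\ c & d \end{pmatrix}\in SL(2,\Bbb{Z})$ and $N$ is upper-triangular whose metaplectic lift is exactly the Gaussian modulation and half-integer shift $f(x)\mapsto\ee^{-\pi\ii t x^2-\pi\ii cds\,x}f(x-\tfrac{ab}{2s})$ appearing on the right-hand side of \eqref{eq_thm_identify}. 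Consequently $\mu_0$ is the product of a square-root branch phase (from Definition \ref{def_FrFT}) and the multiplier $\chi(M)$ by which the metaplectic lift of $M$ acts on the standard comb $\comb_1$.

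Both factors are 8th roots of unity. The branch phase is a power of $\ee^{-\pi\ii/4}$ depending only on $\alpha$ modulo 8 (via $\mathcal{F}^{\alpha+8}=\mathcal{F}^\alpha$ from Definition \ref{def_FrFT}); combined with $\mathcal{F}^{\alpha+4}=-\mathcal{F}^\alpha$, the $\alpha$-dependence reduces to whether $\alpha\in(-2,2]$ modulo 8. The multiplier $\chi(M)$ is, after normalization, a quadratic Gauss sum (essentially $\frac{1}{\sqrt{|c|}}\sum_{n\bmod c}\ee^{\pi\ii dn^2/c}$ for $c\neq 0$, with $c=0$ degenerate and trivial), or equivalently the coefficient in the functional equation of $\theta(\tau)=\sum_n\ee^{\pi\ii\tau n^2}$ under $\tau\mapsto(a\tau+b)/(c\tau+d)$; it is classically an 8th root of unity depending only on $(a,b,c,d)$. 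The main obstacle I foresee is the branch-of-square-root bookkeeping through the factorization $\mathcal{F}^\alpha U_r = N\cdot M$ (since the metaplectic lift of $M$ is only defined up to a sign), which must be done carefully to pin $\mu_0$ down as a specific, canonical 8th root of unity rather than one defined only up to sign.
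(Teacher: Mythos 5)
Your proposal reaches the right conclusion but by a different route from the paper's main argument, and it has one conceptual slip worth flagging.

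The paper's proof (Section~\ref{s_symmetries}) is \emph{algorithmic and self-contained}: it restates the eigenvalue relation as $\shift_{(\frac{q}{2},\frac{p}{2})}\mathcal{M}(\Bff{M})\comb_1 = \mu\comb_1$ (Theorem~\ref{thm_id_metapl}), proves three symmetry relations for $\mu$ in Proposition~\ref{prop_mu_transformations} --- one coming from the Poisson summation formula and two from commuting a Gaussian multiplier $\mathcal{W}_j$ past $\mathcal{M}(\Bff{M})$ --- and then runs a Euclidean-algorithm-type reduction of $\Bff{M}$ to $\pm\Bff{I}$, picking up a tracked power of $\ee^{\pi\ii/4}$ at each step. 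This yields $\mu^8=1$ without appealing to classical theta theory. Your proposal instead \emph{defers} to the classical theta multiplier system (Gauss sums, the multiplier in $\tau\mapsto(a\tau+b)/(c\tau+d)$), which is precisely the paper's \emph{alternative} route in Section~\ref{s_Bargmann} (via the Bargmann transform and Mumford's functional equation), and the Gauss-sum formula is Theorem~\ref{thm_Gauss_Sum}, which the paper obtains from a separate Fresnel-integral analysis in Section~\ref{s_continuity}. Both routes are legitimate; yours buys brevity by importing a known result, while the paper's is self-contained and also organizes the eighth-root bookkeeping needed for the Python algorithm and for Theorem~\ref{thm_theta}.

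Two remarks on details. First, your preliminary step $|\mu_0|=1$ is unnecessary: once $\mu_0^8=1$ is established, the modulus comes for free, and making the ``averaged density'' argument rigorous would be extra work since $\comb_r\notin L^2(\Bbb{R})$. Second --- and this is the real slip --- the factorization you propose, ``the classical symbol of $\mathcal{F}^\alpha U_r$ factors in $SL(2,\Bbb{R})$ as $N\cdot M$ with $N$ upper-triangular and its metaplectic lift being the Gaussian modulation \emph{and half-integer shift}'', cannot hold as stated: a half-integer translation in phase space is an \emph{affine} map, not a linear one, so it is not the metaplectic lift of any $N\in SL(2,\Bbb{R})$. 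The correct bookkeeping (which the paper does in Lemma~\ref{lem_M_Z} and Theorem~\ref{thm_id_metapl}) is that $\mathcal{V}_{1/s}\mathcal{W}_t\mathcal{F}^\alpha\mathcal{V}_{1/r}=\mathcal{M}(\Bff{M})$ is a clean linear factorization, and the half-integer shift $\shift_{(\frac{q}{2},\frac{p}{2})}$ arises \emph{separately}, forced by the requirement that the result be $\comb_1$-invariant under the translated lattice $\Bff{M}\Bbb{Z}^2$; its contribution to $\mu$ must then be controlled by the commutation law \eqref{eq_shift_composition} and the identity $\mathcal{W}_j\comb_1=\shift_{(0,j/2)}\comb_1$. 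Your conclusion survives, but the factorization needs to live in the affine (Jacobi) group, not in $SL(2,\Bbb{R})$.
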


\begin{remark}
The terms $\pi \ii cd s x$ and $\frac{ab}{2s}$ in \eqref{eq_thm_identify} depend only on the parity of $cd$ and $ab$, since 
\[
	\ee^{2\pi\ii s x}\comb_{1/s}(x) = \comb_{1/s}(x-\frac{2}{2s}) = \comb_{1/s}(x).
\]
Furthermore, at least one of $ab$ and $cd$ must be even because $ad-bc = 1$.
\end{remark}

\begin{remark}\label{rem_sign_evenness}
We discuss several symmetries for $\mu$ in Section \ref{s_symmetries}. We note here that the dependence on whether $\alpha \in ]-2, 2]$ modulo 8 is an ambiguity regarding the sign only, which comes from the fact that $\ee^{\frac{\pi\ii}{2}(\alpha+4)} = \ee^{\frac{\pi\ii}{2}\alpha}$ while $\mathcal{F}^{\alpha + 4} = -\mathcal{F}^\alpha$.

We also note that $\mathcal{F}^\alpha \comb_r$ is an even distribution (regardless of whether the support is discrete or not) because $\comb_r$ is even and the reflection $f \mapsto f(-x)$ can be written as $f(-x) = \ii \mathcal{F}^2f(x)$ which commutes with $\mathcal{F}^\alpha$: 
\[
	\mathcal{F}^\alpha \comb_r(-x) = \ii \mathcal{F}^2\mathcal{F}^\alpha \comb_r(x) = \mathcal{F}^\alpha( \ii \mathcal{F}^2\comb_r)(x) = \mathcal{F}^\alpha \comb_r(x).
\]
\end{remark}

\begin{remark}
We express $\mathcal{F}^\alpha \comb_r(x)$ in Theorem \ref{thm_identify} using multiplication by $\ee^{-\pi \ii t x^2}$, primarily because this multiplication is a metaplectic operator. Another natural way to write these objects is as the sum of Dirac masses localized on (possibly half-)integer multiples of $\frac{1}{s}$, as described in detail in Remark \ref{rem_sum_expression}:
\begin{equation}\label{eq_identify_sum_even_odd}
	\mathcal{F}^\alpha \comb_r(x) = \mu_0 s^{-1/2}\sum_{k \in \Bbb{Z}+\{\frac{ab}{2}\}}\ee^{-\frac{\pi \ii t}{s^2} k^2 - \pi\ii cdk}\delta(x-\frac{k}{s}).
\end{equation}
\end{remark}

\begin{remark}\label{rem_rho_s}
Let us suppose that $r\cos \frac{\pi\alpha}{2}$ and $\frac{1}{r}\sin\frac{\pi\alpha}{2}$ are linearly dependent over $\Bbb{Z}$. To see that \eqref{eq_def_rho_s} holds, note that there exist $a',b'\in\Bbb{Z}$ not both zero (and, without loss of generality, relatively prime) such that 
\begin{equation}\label{eq_ab_cossin}
	b'r\cos \frac{\pi\alpha}{2} - \frac{a'}{r}\sin\frac{\pi\alpha}{2} = 0.
\end{equation}
Consequently, $s' = (a'/r - \ii b' r)\ee^{\frac{\pi\ii\alpha}{2}}$ is nonzero (as the product of a two nonzero complex numbers) and real (since the imaginary part vanishes by choice of $a', b'$). We obtain $\ee^{\frac{\pi\ii \alpha}{2}} = \frac{s'}{(a'/r)^2 + (b'r)^2}(a'/r + \ii b'r)$, allowing us to conclude that \eqref{eq_def_rho_s} holds with $(a,b) = (\operatorname{sgn} s')(a', b')$ and $s = |s'|$. (The fact that $s = \sqrt{(a/r)^2 + (br)^2}$ follows because $|\ee^{\frac{\pi\ii\alpha}{2}}| = 1$.)
\end{remark}

\subsection{Illustrations of some ``Dirac brushes''}\label{ss_brushes}

To give a concrete example of Theorem \ref{thm_identify}, let $a = b = r = 1$ and take $c = 0$, $d = 1$ so that $s = \sqrt{2}$ and $t = 1$. In this case, $\mu_0 = 1$ (see Section \ref{s_symmetries}), and one can compute that
\[
	\mathcal{F}^{1/2}\comb_1(x) = 2^{-1/4}\ee^{-\frac{\pi\ii}{8}}\sum_{k\in\Bbb{Z}} \gamma_k \delta\left(x-\frac{k+1/2}{\sqrt{2}}\right),
\]
where
\[
	\gamma_k = \ee^{-\frac{\pi\ii}{2}(k^2+k)} = \left\{\begin{array}{ll} 1,& k \equiv 0, 3~(\opnm{mod} 4) \\ -1, & k\equiv 1,2~(\opnm{mod} 4).\end{array}\right.
\]
We illustrate this distribution at the top of Figure \ref{fig_intro}, where we illustrate functions $f:\Bbb{R} \to \Bbb{C}$ as curves in $\Bbb{R} \times \Bbb{C} \cong \Bbb{R}^3$.

For comparison, we also illustrate $\mathcal{F}^{\alpha}\comb_1$ for $\cot(\pi\alpha/2) = 20/21$ and $200/201$ (chosen arbitrarily among fractions near $1$), allowing us to see that the ``diameter'' and spacing for these brushes, $s^{-1/2}$ and $s^{-1}$ respectively, tend to zero as $s = \sqrt{a^2 + b^2}$ becomes large.

Let us consider the third example, where $\ee^{\frac{\pi\ii\alpha_3}{2}} = (80401)^{-1/2}(200+201\ii)$. The result of Theorem \ref{thm_identify} with $c = -1$, $d = -1$, and $t = -401$, expressed as in \eqref{eq_identify_sum_even_odd}, is
\[
	\mathcal{F}^{\alpha_3}\comb_1(x) = \frac{\ee^{-\pi\ii/4}}{(80401)^{1/4}} \sum_{k\in\Bbb{Z}} \ee^{\pi\ii\frac{401}{80401}k^2}\delta(x - \frac{k}{\sqrt{80401}}).
\]
To illustrate that this intimidating expression is far from random (despite appearances in Figure \ref{fig_intro}), in Figure \ref{fig_intro_cty} we plot the antiderivative
\[
	\Pi_\alpha(X) = \int_0^X \mathcal{F}^\alpha \comb_1(x)\,\dd x,
\]
described in Definition \ref{def_antiderivative}, for the three examples in Figure \ref{fig_intro} as well as for $\alpha_4$ satisfying $\cot\frac{\pi\alpha_4}{2} = \frac{2000}{2001}$. We see that the graphs of $\Pi_{\alpha_3}$ and $\Pi_{\alpha_4}$ become quite similar to the graph of $\Pi_{\alpha_1}$ for $\alpha_1 = \pi/4$, up to a spiraling error discussed in Section \ref{ss_spirals}.

\begin{figure}
\centering
\includegraphics[width = \textwidth]{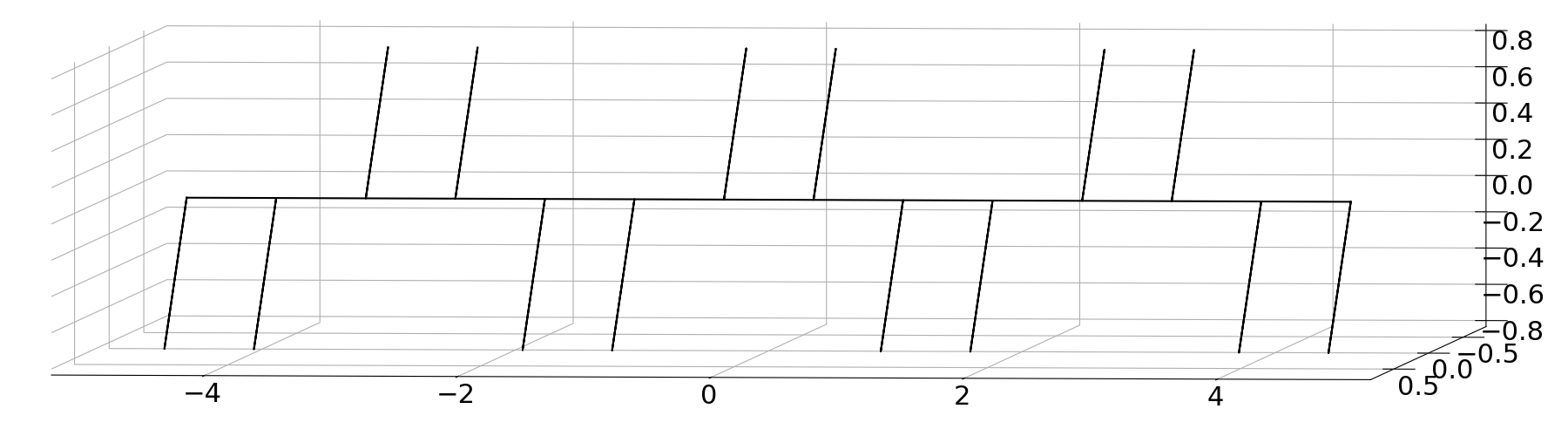}
\includegraphics[width = \textwidth]{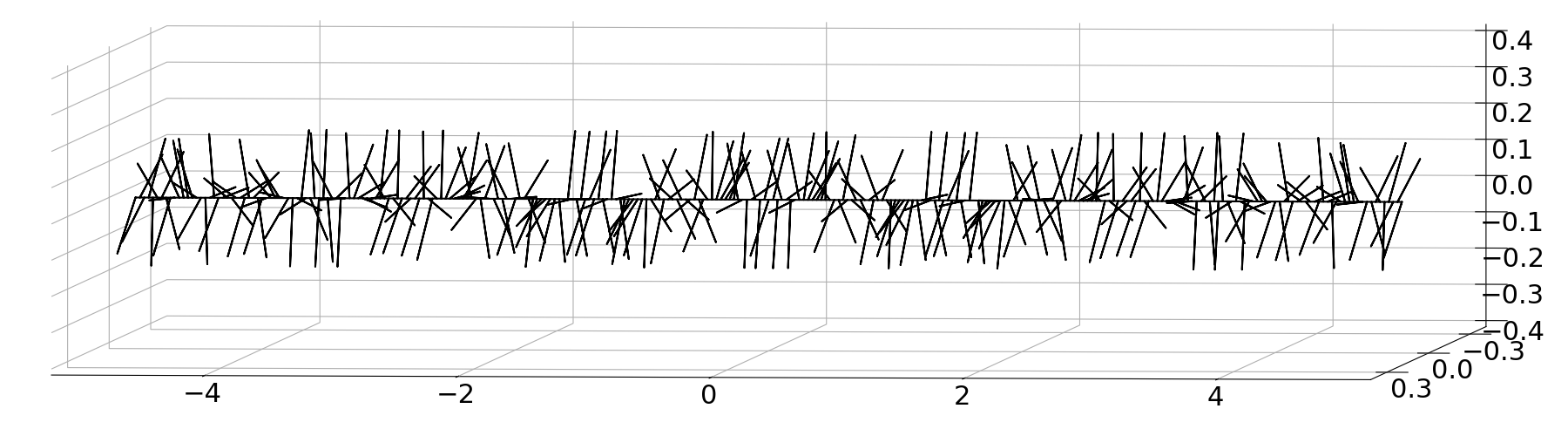}
\includegraphics[width = \textwidth]{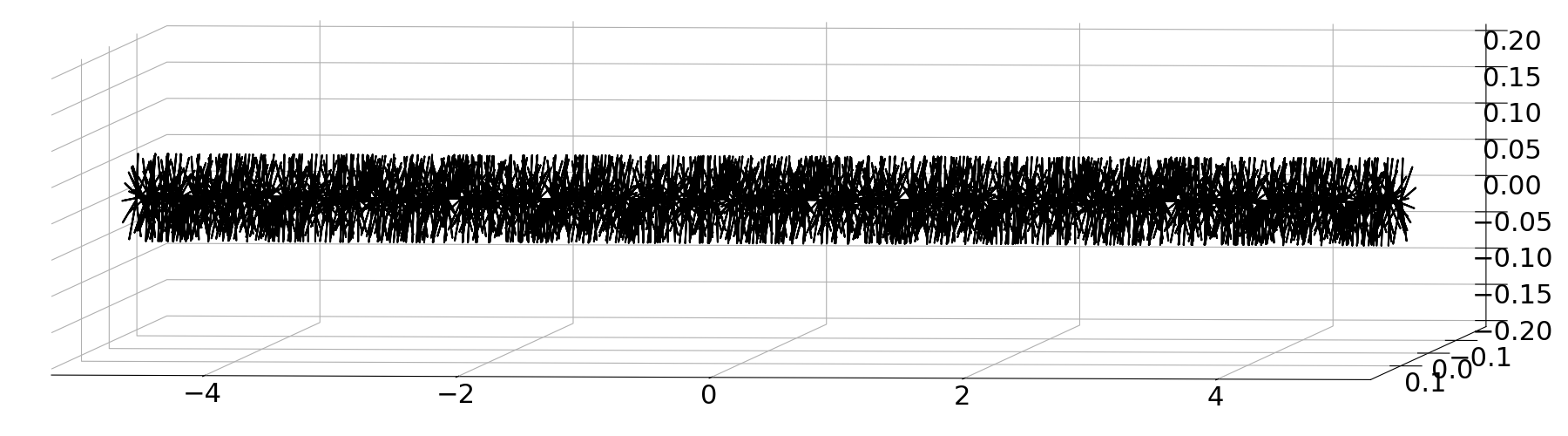}
\caption{Plots, where $f(x)$ is plotted as (horizontal, vertical, depth) = $(x, \Re f(x), \Im f(x))$, of $\mathcal{F}^{\alpha}\comb_1(x)$ where $\cot\frac{\pi\alpha}{2} = \frac{1}{1}, \frac{20}{21}, \frac{200}{201}$. Here, $\gamma \delta(x-x_0)$ is represented by the corresponding line segment from (horizontal, vertical, depth) = $(x_0, 0, 0)$ to $(x_0, \Re \gamma, \Im \gamma)$.\label{fig_intro}}
\end{figure}

\begin{figure}
\centering
\includegraphics[width = \textwidth]{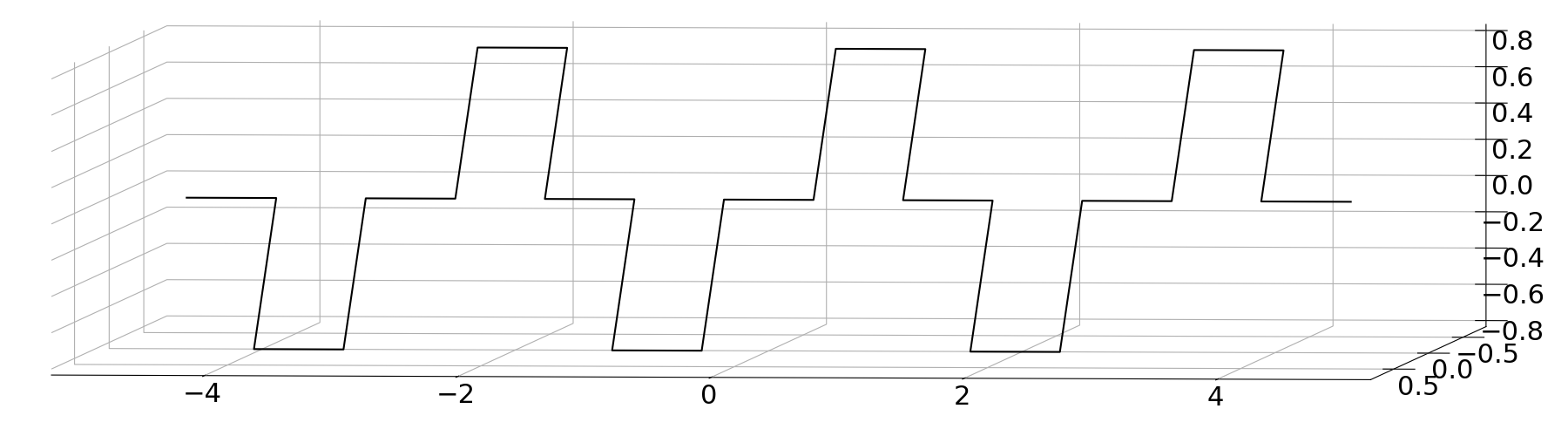}
\includegraphics[width = \textwidth]{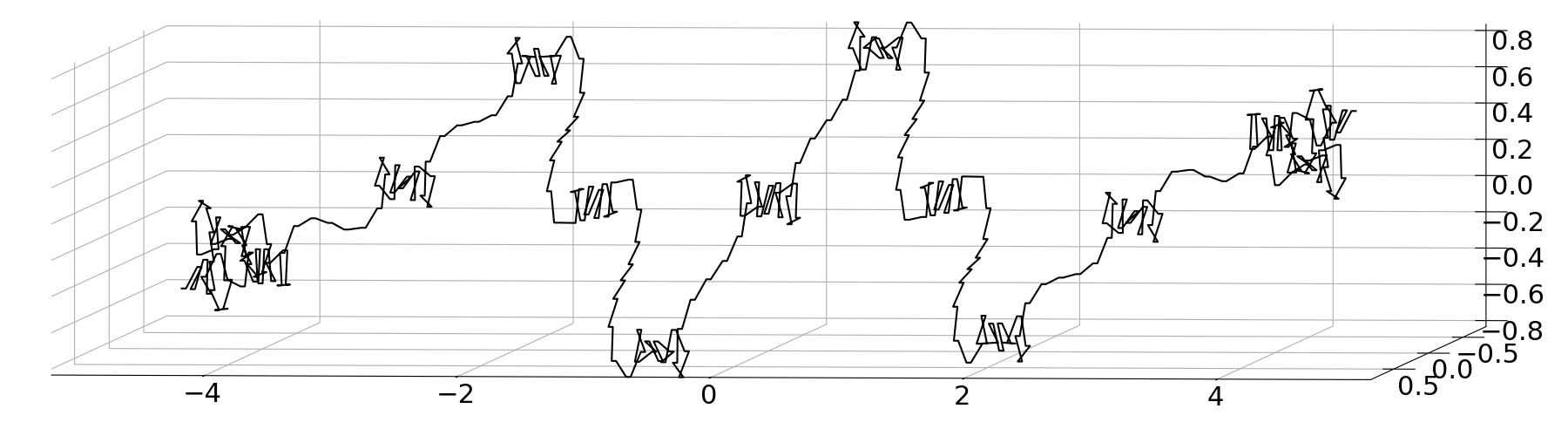}
\includegraphics[width = \textwidth]{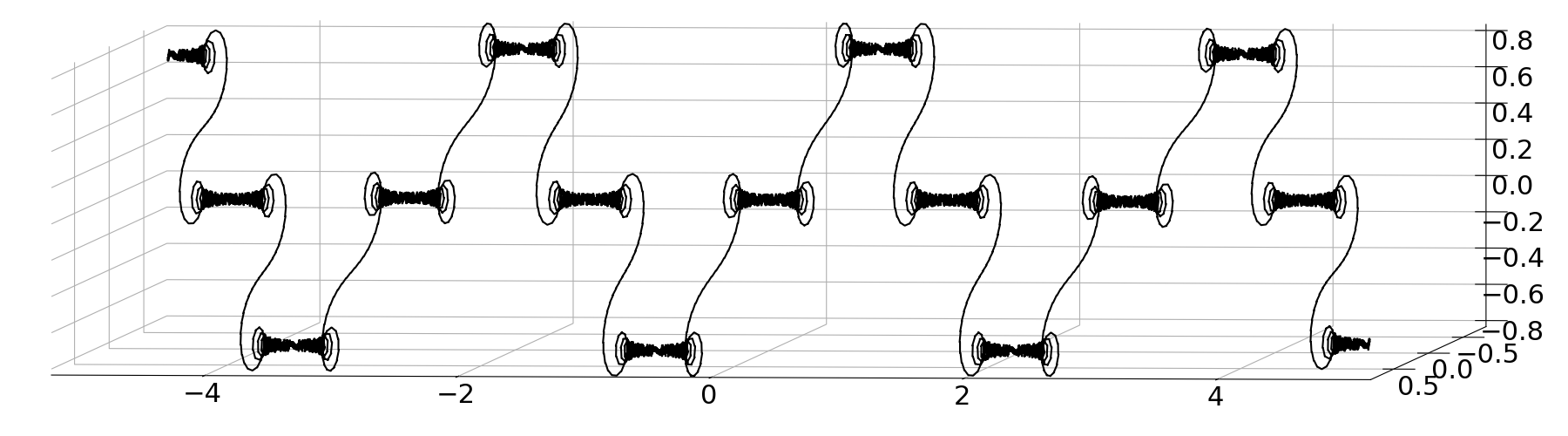}
\includegraphics[width = \textwidth]{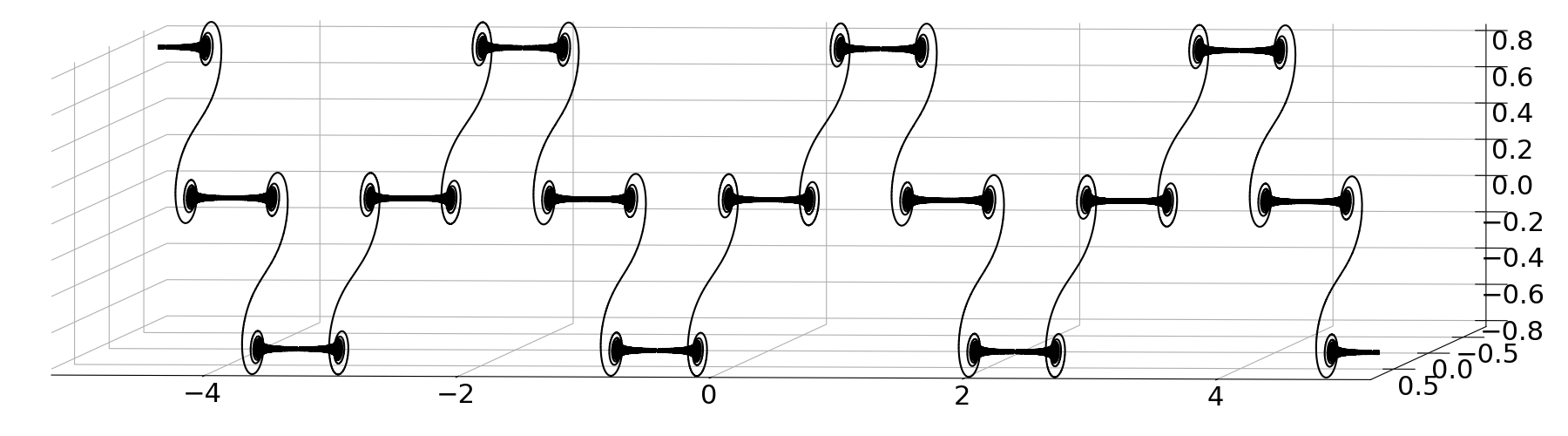}
\caption{Plots, where $f(x)$ is plotted as (horizontal, vertical, depth) = $(x, \Re f(x), \Im f(x))$, of the antiderivatives of the three measures in Figure \ref{fig_intro} plus, at the bottom, where $\cot\frac{\pi\alpha}{2} = \frac{2000}{2001}$.\label{fig_intro_cty}}
\end{figure}

\subsection{Context and plan of paper}

The author has often worked with (complexified) metaplectic operators \cite{Viola_2017} and happened upon this problem performing some numerical approximations. (It suffices to replace $\alpha$ by $\alpha - \ii \eps$ to obtain super-exponential convergence.) One immediately sees surprising and complicated structure behind the natural question of the interaction between a Dirac comb and the fractional Fourier transform, where the latter can be understood as the Schr\"odinger evolution of the quantum harmonic oscillator (Remark \ref{rem_Schro}).

The objects considered are central to the theory of modular forms, about which the author knows very little. The connection with the metaplectic representation is well-established; see for instance \cite{Lion_Vergne_1980}. The author would certainly be interested to find out to what extend this work runs in parallel to (or duplicates) works already established in that domain or elsewhere.

In studying this question, one is naturally led to prioritize the metaplectic representation and its associated (somewhat peculiar) normalizations, already seen in \eqref{eq_def_comb} and \eqref{eq_def_Fourier}. As shown in Section \ref{s_symmetries}, the symmetries giving Theorem \ref{thm_mu_8} (or, equivalently, the functional equation for the Jacobi theta functions) come from a relatively short list of metaplectic identites. Specifically, one commutes half-integer shifts \eqref{eq_shift_composition} and metaplectic operators \eqref{eq_Egorov} with straightforward linear algebra, and one can compose metaplectic operators (Lemma \ref{lem_M_compose}) via their associated linear maps with some occasional sign considerations. The particularities of the Dirac comb appear in the application of integer shifts \eqref{eq_shift_integer_comb}, the Poisson summation formula \eqref{eq_Poisson_original}, and the seemingly elementary fact that $\ee^{\pi\ii x^2 - \pi \ii x}\comb_1 = \comb_1$ because $k^2 \equiv k~(\opnm{mod} 2)$ for $k\in\Bbb{Z}$. Geometrically, this says that a shear in phase space \eqref{eq_def_canon_Wt} acts on the Dirac comb in the same way as a half-integer momentum shift, and this fact alone accounts for all half-integer shifts in the proof of Theorem \ref{thm_mu_8}.

The plan of this paper is as follows. In Section \ref{s_direct} which follows, we introduce shifts in phase space and their interaction with the Dirac comb and the fractional Fourier transform, and then we prove Theorems \ref{thm_support} and \ref{thm_identify}. We introduce the metaplectic point of view in Section \ref{s_metapl}; this allows us to rephrase Theorem \ref{thm_identify} in terms of the Dirac comb being an eigenfunction of certain shifted metaplectic operators. We also obtain the relation between Theorem \ref{thm_identify} and the functional equation for the Jacobi theta functions. In Section \ref{s_symmetries} we use symmetries given by the metaplectic representation to obtain the coefficient $\mu$ following the algorithm in \cite{Mumford_1983}. In Section \ref{s_Bargmann} we show that, with the Bargmann transform, one can deduce Theorem \ref{thm_identify} from the functional equation, and we also can establish a type of weak boundedness used later to show weak continuity of the family $\{\mathcal{F}^\alpha \comb_r\}_{\alpha \in\Bbb{R}}$. In Section \ref{s_further} we consider periodicity and parity of $\mathcal{F}^\alpha\comb_r$. Section \ref{s_continuity} concerns continuity of $\{\mathcal{F}^\alpha \comb_1\}_{\alpha \in \Bbb{R}}$: this family is wildly divergent in absolute value in the sense of complex measures, yet continuous when ``smoothed'' by the harmonic oscillator raised to any power less than $-1/2$. Its antiderivative is not locally uniformly continuous, but we can understand some of its behavior as $\alpha \to 0$ as approaching Fresnel integrals, and within these Fresnel integrals we can analyze repeating patterns which give the coefficient $\mu$ in terms of Gauss sums. The last section is devoted to the question, not answered here, of what happens when Theorem \ref{thm_support} gives $\opnm{supp} \mathcal{F}^\alpha \comb_1 = \Bbb{R}$: numerically, it certainly seems that the antiderivative converges, but to a rough and possibly self-similar function.

\begin{acknowledgements}
This work began with some conversations during the workshop ``Atelier d'Analyse Harmonique 2017'' of the ERC FAnFArE. In particular, the author is grateful to Yves Meyer for discussions and encouragement at the outset of this study. The author is greatly indebted to Francis Nier for numerous discussions and helpful references. The author also acknowledges the support of the R\'egion Pays de la Loire through the project EONE (\'Evolution des Op\'erateurs Non-Elliptiques).

\end{acknowledgements}

\section{Direct proof of Theorems \ref{thm_support} and \ref{thm_identify}}\label{s_direct}

\subsection{Shifts in phase space and the fractional Fourier transform}

This paper is essentially an application of the theory of metaplectic operators applied to shifts in phase space. Behind all these objects, of course, is the language of symplectic linear algebra, but we minimize the use of this language in the hopes of giving a more approchable presentation.

We define the phase-space shift for $(x_0, \xi_0) \in \Bbb{R}^2$ as
\begin{equation}\label{eq_def_shift}
	\shift_{(x_0, \xi_0)}f(x) = \ee^{-\pi \ii x_0 \xi_0 + 2\pi\ii \xi_0 x}f(x-x_0).
\end{equation}
One can readily verify the composition law
\begin{equation}\label{eq_shift_composition}
	\shift_{(x_1, \xi_1)}\shift_{(x_2,\xi_2)} = \ee^{\pi\ii(\xi_1 x_2 - x_2\xi_1)}\mathcal{S}_{(x_1+x_2, \xi_1+\xi_2)}.
\end{equation}

When $(jr, k/r) \in r\Bbb{Z} \times \frac{1}{r}\Bbb{Z}$, the Dirac comb $\comb_r$ in \eqref{eq_def_comb} is an eigenfunction of $\mathcal{S}_{(jr, k/r)}$ with eigenvalue $1$ or $-1$:
\begin{equation}\label{eq_shift_integer_comb}
	\mathcal{S}_{(jr,k/r)}\comb_r = \ee^{-\pi\ii jk}\comb_r, \quad \forall j, k \in \Bbb{Z}.
\end{equation}
Moreover, any distribution $f$ such that $\mathcal{S}_{(r, 0)}f = \mathcal{S}_{(0,1/r)}f = f$ must be a multiple of the Dirac comb $\comb_r$, a classical fact which we recall in Lemma \ref{lem_id_u1} below. In this way, \eqref{eq_shift_integer_comb} defines $\comb_r$ up to constants.

The fractional Fourier transform acts on shifts, via conjugation, by rotating the shift: defining
\begin{equation}\label{eq_def_F_canon}
	\Bff{F}_\alpha = \left(\begin{array}{cc} \cos \frac{\pi\alpha}{2} & \sin \frac{\pi\alpha}{2} \\ -\sin \frac{\pi\alpha}{2} & \cos\frac{\pi\alpha}{2}\end{array}\right),
\end{equation}
one has
\begin{equation}\label{eq_Egorov_F}
	\mathcal{F}^\alpha \mathcal{S}_{(x_0, \xi_0)} = \mathcal{S}_{\Bff{F}_\alpha(x_0, \xi_0)}\mathcal{F}^\alpha, \quad \forall (x_0, \xi_0) \in \Bbb{R}^2.
\end{equation}
This is but one example of the action of a metaplectic operator acting on shifts, discussed in more generality in Section \ref{ss_metapl}, but it is sufficient to prove Theorems \ref{thm_support} and \ref{thm_identify}.

\begin{example}
For the Fourier transform, the standard identity
\[
	\left(\mathcal{F}f(\cdot - x_0)\right)(\xi) = \ee^{-2\pi \ii x_0 \xi}(\mathcal{F}f)(\xi)
\]
can be written in terms of \eqref{eq_Egorov_F} with $\alpha = 1$ as
\[
	\mathcal{F}\mathcal{S}_{(x_0, 0)} = \mathcal{S}_{(0, -x_0)}\mathcal{F}.
\]
The presence of the factor $\ee^{-\pi\ii x_0\xi_0}$ in the definition \eqref{eq_def_shift} significantly simplifies expressions like $\mathcal{F}\shift_{(x_0, \xi_0)} = \shift_{(\xi_0, -x_0)}\mathcal{F}$ because constants arising from changes of variables cancel.
\end{example}

\subsection{Proof of Theorems \ref{thm_support} and \ref{thm_identify}}\label{ss_proofs_elementary}

We now prove Theorems \ref{thm_support} and \ref{thm_identify} using the Egorov relation \eqref{eq_Egorov_F} for the fractional Fourier transform. We also use \eqref{eq_shift_integer_comb} and the idea behind the classical Lemma \ref{lem_id_u1}: when $v \in \mathcal{D}'(\Bbb{R})$ is a distribution and $f \in C^\infty(\Bbb{R})$ is smooth with simple isolated zeros, $fv = 0$ implies that $v$ is a series of delta-functions supported on $\{f = 0\}$.

\begin{proof}[Proof of Theorem \ref{thm_support}]
To simplify notation, let us write
\[
	\rho_1 = \cos \frac{\pi\alpha}{2}, \quad \rho_2 = \sin\frac{\pi\alpha}{2}
\]
so that $\ee^{\frac{\pi\ii\alpha}{2}} = \ccc + \ii \sss.$

Applying \eqref{eq_Egorov_F} to \eqref{eq_shift_integer_comb} gives, for any $j, k\in\Bbb{Z}$,
\begin{equation}\label{eq_thm_support_proof}
	\begin{aligned}
	\mathcal{F}^\alpha \comb_r(x) &= \mathcal{F}^\alpha \ee^{\pi\ii jk}\mathcal{S}_{(jr, k/r)}\comb_r(x)
	\\ &= \ee^{\pi\ii jk}\mathcal{S}_{(jr\ccc + k\sss/r, -jr\sss + k\ccc/r)}\mathcal{F}^\alpha \comb_r (x)
	\end{aligned}
\end{equation}
For any distribution $v$, $\opnm{supp}\shift_{(x_0, \xi_0)} v = \opnm{supp} v + \{x_0\}$. Therefore for every $j,k\in\Bbb{Z}$, 
\[
	\opnm{supp}\mathcal{F}^\alpha \comb_r = \opnm{supp}\mathcal{F}^\alpha \comb_r + \{jr\ccc + k\sss/r\}.
\]
If $r\cos \frac{\pi\alpha}{2}$ and $\frac{1}{r}\sin\frac{\pi\alpha}{2}$ are linearly independent over $\Bbb{Z}$, we can make 
\[
	jr\rho_1 + \frac{k}{r}\rho_2 = jr\cos \frac{\pi\alpha}{2} + \frac{k}{r}\sin\frac{\pi\alpha}{2}
\]
arbitrarily small by Dirichlet's Approximation Theorem (for example, \cite[Sect.\ 1.2]{Meyer_1972}. Furthermore, $\{jr\rho_1 + k\rho_2/r\}_{(j,k)\in\Bbb{Z}}$ is a subgroup of $\Bbb{R}$ that now contains arbitrarily small elements, so it and $\opnm{supp} \mathcal{F}^\alpha \comb_r$ are dense. Since $\opnm{supp} \mathcal{F}^\alpha \comb_r$ is closed, we have shown that $\opnm{supp}\mathcal{F}^\alpha \comb_r = \Bbb{R}$.

On the other hand, if $r\ccc = r\cos \frac{\pi\alpha}{2}$ and $\sss/r = \frac{1}{r}\sin\frac{\pi\alpha}{2}$ are linearly dependent over $\Bbb{Z}$, as in Remark \ref{rem_rho_s} we may choose $a,b\in\Bbb{Z}$ relatively prime and $s > 0$ such that $\ee^{\frac{\pi\ii\alpha}{2}} = \frac{1}{s}(a/r + \ii b r)$ as in \eqref{eq_def_rho_s}. Replacing $\ccc$ with $a/(rs)$ and $\sss$ with $br/s$ and setting $(j,k) = (b,a)$ in \eqref{eq_thm_support_proof} gives $jr\ccc + k\sss/r = 0$ and
\[
	-jr\sss + \frac{k\ccc}{r} = \frac{1}{s}(b^2r^2 + \frac{a^2}{r^2}) = s,
\]
so
\[
	\mathcal{F}^\alpha \comb_r = \ee^{\pi \ii ab}\mathcal{S}_{(0, -s)}\mathcal{F}^\alpha \comb_r = \ee^{-2\pi\ii(sx-\frac{ab}{2})}\mathcal{F}^\alpha \comb_r.
\]
Consequently,
\[
	\opnm{supp} \mathcal{F}^\alpha \comb_r \subseteq \{x \::\: \ee^{-2\pi\ii (sx - \frac{ab}{2})} = 1\} = \frac{1}{s}(\Bbb{Z} + \{\frac{ab}{2}\}),
\]
which of course implies that $\opnm{supp}\mathcal{F}^\alpha \comb_r$ is discrete. In fact, because the zeros of $\ee^{2\pi \ii (sx - \frac{ab}{2})} - 1$ are simple, we can conclude from \cite[Thm.~3.1.16]{Hormander_ALPDO1} that $\mathcal{F}^\alpha \comb_r(x)$ is a delta-function when restricted to a sufficiently small neighborhood of any $\frac{1}{s}(k+\frac{ab}{2})$; see the proof of Lemma \ref{lem_id_u1}, which is taken from \cite[Sec.~7.2]{Hormander_ALPDO1}. Therefore for some sequence $\{\gamma_k\}_{k\in\Bbb{Z}}$ of complex numbers,
\begin{equation}\label{eq_series_gammak}
	\mathcal{F}^\alpha \comb_r(x) = \sum_{k\in\Bbb{Z}} \gamma_k \delta\left(x-\frac{1}{s}\left(k + \frac{ab}{2}\right)\right).
\end{equation}
\end{proof}

\begin{proof}[Proof of Theorem \ref{thm_identify}]
We continue our analysis using \eqref{eq_thm_support_proof} and \eqref{eq_series_gammak}. When $\ee^{\frac{\pi\ii\alpha}{2}} = \frac{1}{s}(a/r + \ii br)$ for $a,b\in\Bbb{Z}$ relatively prime and $s > 0$, we minimize the shift in space in \eqref{eq_thm_support_proof} by setting $j = d$ and $k = -c$ chosen such that $ad - bc = 1$. Consequently, $jr\ccc + k\sss/r = 1/s$. Then, with $t = ac/r^2 + bdr^2$ as in \eqref{eq_def_t} and using the expression \eqref{eq_series_gammak},
\[
	\begin{aligned}
	\mathcal{F}^\alpha \comb_r(x) &= \ee^{-\pi\ii cd}\mathcal{S}_{(1/s, -t/s)}\mathcal{F}^\alpha \comb_r(x)
\\ &= \ee^{-\pi\ii cd + \pi\ii \frac{t}{s^2} - 2\pi\ii \frac{t}{s}x}\mathcal{F}^\alpha \comb_r(x-1/s)
	\\ &= \ee^{-\pi\ii cd + \pi\ii \frac{t}{s^2} - 2\pi\ii \frac{t}{s}x} \sum_{k\in\Bbb{Z}}\gamma_k\delta(x-\frac{k+1+ab/2}{s})
	\\ &= \sum_{k\in\Bbb{Z}}\ee^{-\pi\ii cd + \pi\ii \frac{t}{s^2} - 2\pi\ii \frac{t}{s}x}\gamma_{k-1}\delta(x-\frac{k+ab/2}{s})
	\\ &= \sum_{k\in\Bbb{Z}}\ee^{-\pi\ii cd + \pi\ii \frac{t}{s^2} - 2\pi\ii \frac{t}{s^2}(k+ab/2)}\gamma_{k-1}\delta(x-\frac{k+ab/2}{s}).
	\end{aligned}
\]
We conclude that, in \eqref{eq_series_gammak}, the coefficients $\{\gamma_k\}_{k\in\Bbb{Z}}$ obey
\[
	\begin{aligned}
	\gamma_k &= \ee^{-\pi\ii cd + \pi\ii \frac{t}{s^2} - 2\pi\ii \frac{t}{s^2}(k+ab/2)}\gamma_{k-1}
	\\ &= \ee^{\pi\ii S_k}\gamma_0
	\end{aligned}
\]
when
\[
	\begin{aligned}
	S_k &= \sum_{j=1}^k \left(-cd + \frac{t}{s^2}(1 - 2j - ab)\right)
	\\ &= -cd k + \frac{t}{s^2}(k - k(k+1)-abk)
	\\ &= -cds \frac{k+ab/2}{s} - t\left(\frac{k+ab/2}{s}\right)^2 + cds\frac{ab}{2s} + t\left(\frac{ab}{2s}\right)^2.
	\end{aligned}
\]

We recognize that
\[
	\gamma_k = \ee^{-\pi\ii cds x_k - \pi\ii t x_k^2 + \pi \ii cds x_0 + \pi\ii t x_0^2}\gamma_0, \quad x_k = \frac{k+ab/2}{s}.
\]
Plugging back into \eqref{eq_series_gammak}, we obtain
\[
	\begin{aligned}
	\mathcal{F}^\alpha \comb_r(x) &= \ee^{\pi\ii cds x_0 + \pi\ii t x_0^2}\gamma_0\sum_{k\in\Bbb{Z}} \ee^{-\pi\ii cds x_k - \pi\ii t x_k^2} \delta(x-\frac{k+ab/2}{s})
	\\ &= \mu_0 \ee^{-\pi\ii cds x - \pi\ii t x^2}\comb_{1/s}(x-\frac{ab}{2s})
	\end{aligned}
\]
with $\mu_0 = s^{1/2}\ee^{\pi\ii cds x_0 + \pi\ii t x_0^2}\gamma_0$.\fnJ{check the computation one last time.} This proves Theorem \ref{thm_identify}.
\end{proof}

\section{Using the metaplectic representation of $SL(2,\Bbb{R})$}\label{s_metapl}

\subsection{Introduction to metaplectic operators}\label{ss_metapl}
The fractional Fourier transform is but one of many metaplectic operators (we restrict our discussion to dimension one except for Section \ref{ss_dim_high}), a closed subgroup of unitary operators on $L^2(\Bbb{R})$ which are automorphisms of $\mathscr{S}(\Bbb{R})$ and $\mathscr{S}'(\Bbb{R})$ which preserve linear forms in $(x,D_x) = (x, \frac{1}{2\pi\ii} \frac{\dd }{\dd x})$. (See, for instance, \cite{Leray_1981}.) They are in two-to-one correspondence with matrices with real entries and determinant one,
\begin{equation}\label{eq_def_M_Bff}
	\Bff{M} = \left(\begin{array}{cc} a & b \\ c & d \end{array}\right) \in SL(2, \Bbb{R}),
\end{equation}
and when $b \neq 0$ (we postpone the case $b = 0$ for Remark \ref{rem_bzero}), we may write the integral kernel for $\mathcal{M} = \mathcal{M}(\Bff{M})$ as
\begin{equation}\label{eq_M_kernel}
	\mathcal{M}f(x) = \frac{1}{\sqrt{\ii b}}\int_{\Bbb{R}} \ee^{\frac{\pi\ii}{b}(dx^2 - 2xy + ay^2)}f(y)\,\dd y.
\end{equation}
We adopt the convention that the sign of the square root is such that $\Re \sqrt{\ii b} > 0$, but $-\mathcal{M}$ is also a metaplectic operator, and $\mathcal{M}$ and $-\mathcal{M}$ are the only metaplectic operators associated with $\Bff{M}$.

Our principal tool linking metaplectic operators and shifts in phase space is the (exact) Egorov relation
\begin{equation}\label{eq_Egorov}
	\mathcal{M}\mathcal{S}_{(x_0, \xi_0)} = \mathcal{S}_{\Bff{M}(x_0, \xi_0)}\mathcal{M}.
\end{equation}
It is elementary to confirm this from the integral kernel \eqref{eq_M_kernel}; we omit the moderately lengthy computation.

The metaplectic group is generated by $\mathcal{F}$, scaling, and multiplication by Gaussians with imaginary exponents; we define these two last families now.

For $r > 0$,
\begin{equation}\label{eq_def_op_Vr}
	\mathcal{V}_r f(x) = \sqrt{r} f(rx), \quad r > 0
\end{equation}
is clearly unitary on $L^2(\Bbb{R})$. It also induces, in the sense of \eqref{eq_Egorov}, the transformation
\begin{equation}\label{eq_def_canon_Vr}
	\Bff{V}_r = \left(\begin{array}{cc} 1/r & 0 \\ 0 & r\end{array}\right).
\end{equation}
It is, in fact, through the scaling $\mathcal{V}_r$ that we have defined the comb $\comb_r$ in \eqref{eq_def_comb}: because a $\delta$-function is homogeneous of degree $-1$,
\begin{equation}\label{eq_def_comb_metapl}
	\comb_r(x) := \mathcal{V}_{1/r}\comb_1(x) = r^{-1/2}\sum_{k\in\Bbb{Z}}\delta(x/r - k) = r^{1/2}\sum_{k\in\Bbb{Z}}\delta(x-rk).
\end{equation}

For $t \in \Bbb{R}$, we also have
\begin{equation}\label{eq_def_op_Wt}
	\mathcal{W}_t f(x) = \ee^{\pi\ii t x^2}f(x),
\end{equation}
which induces the transformation
\begin{equation}\label{eq_def_canon_Wt}
	\Bff{W}_t = \left(\begin{array}{cc} 1 & 0 \\ t & 1\end{array}\right).
\end{equation}

\begin{remark}\label{rem_bzero} We can now describe $\mathcal{M}(\Bff{M})$ when the upper-right entry $b$ of $\Bff{M}$ is zero. In this case we must have, for some choice of sign making $a > 0$,
\[
	\Bff{M} = \pm \left(\begin{array}{cc} a & 0 \\ c & 1/a\end{array}\right).
\]
When the sign is positive,
\[
	\mathcal{M}(\Bff{M}) = \mathcal{V}_{1/a} \mathcal{W}_{ac}:f(x)\mapsto a^{-1/2}\ee^{\pi\ii\frac{c}{a}x^2}f(x/a).
\]
When the sign is negative, since $\mathcal{F}^2f(x) = -\ii f(-x)$ we can write
\[
	\mathcal{M}(\Bff{M}) = \mathcal{F}^2 \mathcal{V}_{1/a} \mathcal{W}_{ac}:f(x)\mapsto -\ii a^{-1/2}\ee^{\pi \ii \frac{c}{a}x^2}f(-x/a).
\]
The choice to use $\mathcal{F}^2$ instead of $\mathcal{F}^{-2} = -\mathcal{F}^2$ is completely arbitrary and has no deeper significance.
\end{remark}

\begin{remark}\label{rem_unitary}
If $K_{\Bff{M}}(x,y)$ is the integral kernel of $\mathcal{M}(\Bff{M})$, notice that $\overline{K_{\Bff{M}}(x,y)} = K_{\Bff{M}^{-1}}(y,x)$. For this reason, when $\langle \varphi, f\rangle = \varphi(f)$ is the sesquilinear dual bracket between $\varphi\in\mathscr{S}'(\Bbb{R})$ and $f\in \mathscr{S}(\Bbb{R})$, we still have
\[
	\langle \mathcal{M}(\Bff{M})\varphi, f\rangle = \langle \varphi, \mathcal{M}(\Bff{M})^{-1}f\rangle.
\]
Therefore (so long as both members are in $L^2(\Bbb{R})$ or one is in $\mathscr{S}'(\Bbb{R})$ and one is in $\mathscr{S}(\Bbb{R})$) we treat all inner products like $L^2(\Bbb{R})$ across which the ``unitary'' operators $\mathcal{M}(\Bff{M})$ and $\shift_{(x_0, \xi_0)}$ may be passed.
\end{remark}

\begin{remark}\label{rem_Schro}
The generators of the metaplectic representation are the Schr\"odigner evolutions of certain degree-two polynomials in 
\[
	(x, D_x) = (x, \frac{1}{2\pi\ii}\frac{\dd}{\dd x}).
\]
The infinitesimal generator of the family of fractional Fourier transforms $\{\mathcal{F}^\alpha\}_{\alpha \in \Bbb{R}}$ is the quantum harmonic oscillator
\begin{equation}\label{eq_def_Q0}
	Q_0 = \pi(x^2 + D_x^2) = \pi\left( x^2 - \frac{1}{4\pi}\frac{\dd^2}{\dd x^2}\right),
\end{equation}
meaning that
\begin{equation}\label{eq_def_Schro_FrFT}
	\mathcal{F}^\alpha = \exp\left(-\frac{\pi\ii}{2}\alpha Q_0\right)
\end{equation}
in the sense that
\[
	\partial_\alpha \mathcal{F}^\alpha f = -\frac{\pi\ii}{2}Q_0(\mathcal{F}^\alpha f)
\]
for all $f \in \mathscr{S}(\Bbb{R})$.

It is quite elementary to see that the infinitesimal generator of $\mathcal{W}_t$ is $x^2$ in the same sense, since
\[
	\partial_t \ee^{\pi\ii t x^2}f(x) = 2\pi\ii x^2(\ee^{\pi\ii t x^2}f(x)).
\]
The scaling $\mathcal{V}_r$ can be written using 
\[
	R_0 = xD_x + \frac{1}{4\pi\ii},
\]
where the term $\frac{1}{4\pi\ii}$ comes from the Weyl quantization. Specifically,
\[
	\mathcal{V}_r = \exp(2\pi\ii(\log r)R_0),
\]
and one could therefore make the case that is more natural to use $\tilde{\mathcal{V}}_r f(x) = \ee^{r/2}f(\ee^r x)$ as appears elsewhere in the literature. The author feels, in the end, that the current definition of $\mathcal{V}_r$ is perhaps more familiar.

Similarly, shifts in phase space are also Schr\"odinger evolutions, in the sense that
\[
	\shift_{(x_0, \xi_0)} = \exp\left(2\pi\ii (\xi_0 x - x_0 D_x)\right).
\]
This may be checked directly or found in essentially any discussion of the Weyl quantization.
\end{remark}

\subsection{Restatement and proof of Theorem \ref{thm_identify}}

We rephrase Theorem \ref{thm_identify} in terms of metaplectic operators since this simplifies the proof of Theorem \ref{thm_identify} significantly and lays the foundation for identifying the coefficient $\mu_0$. On the other hand, the factor $t = \frac{ac}{r^2} + bdr^2$ appears as a \emph{deus ex machina}.

Using the metaplectic definition $\comb_r = \mathcal{V}_{1/r} \comb_1$ presented in \eqref{eq_def_comb_metapl}, $\mathcal{W}_t$ from \eqref{eq_def_op_Wt}, and shifts from \eqref{eq_def_shift} which satisfy \eqref{eq_shift_integer_comb}, Theorem \ref{thm_identify} becomes
\[
	\mathcal{F}^\alpha \mathcal{V}_{1/r} \comb_1 = \mu_0 \ee^{\frac{\pi\ii}{4}abcd}\mathcal{W}_{-t} \mathcal{V}_s\shift_{(\frac{ab}{2}, -\frac{cd}{2})}\comb_1.
\]

We can solve for $\comb_1$ on the right, obtaining
\[
	\mathcal{S}_{(-\frac{ab}{2}, \frac{cd}{2})}\mathcal{V}_{1/s}\mathcal{W}_t\mathcal{F}^\alpha \mathcal{V}_{1/r}\comb_1 = \mu \comb_1, \quad \mu = \ee^{\frac{\pi\ii}{4}abcd}\mu_0.
\]
A straightforward computation, presented in Lemma \ref{lem_M_Z}, shows that under the hypotheses of Theorem \ref{thm_identify} and when $\alpha \in (-2, 2]$,
\begin{equation}\label{eq_FrFT_to_M}
	\mathcal{V}_{1/s}\mathcal{W}_t\mathcal{F}^\alpha \mathcal{V}_{1/r} = \mathcal{M}(\Bff{M}), \quad \Bff{M} = \left(\begin{array}{cc} a & b \\ c & d\end{array}\right) \in SL(2,\Bbb{Z}).
\end{equation}
Conversely, allowing for any $\alpha\in\Bbb{R}$ and varying choices of $t$ (Remark \ref{rem_t_rep}), every $\pm \mathcal{M}(\Bff{M})$ for $\Bff{M} \in SL(2,\Bbb{Z})$ can be obtained in this manner.

Of course, $\ee^{\frac{\pi\ii}{4}abcd}$ is an eighth root of unity. We may also, remaining within the set of eighth roots of unity, replace $(ab, -cd)$ by any other $(q,p)\in\Bbb{Z}^2$ with the same parity, since by \eqref{eq_shift_composition} and \eqref{eq_shift_integer_comb}, whenever $(q, p) = (q', p')~(\opnm{mod} 2)$,
\begin{equation}\label{eq_shift_change_rep}
	\begin{aligned}
	\shift_{(\frac{q}{2}, \frac{p}{2})}\comb_1 &= \shift_{(\frac{q'}{2}, \frac{p'}{2})}\mathcal{S}_{(-\frac{q'}{2}, -\frac{p'}{2})}\mathcal{S}_{(\frac{q}{2}, \frac{p}{2})}\comb_1
	\\ &= \ee^{\frac{\pi\ii}{4}(-p'q + pq')}\shift_{(\frac{q'}{2}, \frac{p'}{2})}\shift_{(\frac{q-q'}{2}, \frac{p-p'}{2})} \comb_1
	\\ &= \ee^{\frac{\pi\ii}{4}(pq'-p'q) - \frac{\pi\ii}{4}(q-q')(p-p')}\shift_{(\frac{q'}{2}, \frac{p'}{2})}\comb_1
	\\ &= \ee^{\frac{\pi\ii}{4}(p(q'-q) - (p'-p)q')}\shift_{(\frac{q'}{2}, \frac{p'}{2})}\comb_1.
	\end{aligned}
\end{equation}

We therefore rephrase, and prove, Theorem \ref{thm_identify} in the following equivalent way.

\begin{theorem}\label{thm_id_metapl}
Let $\Bff{M} = \begin{pmatrix} a & b \\ c & d\end{pmatrix}\in SL(2,\Bbb{R})$, and let $q, p\in\Bbb{R}$. Recall the definitions of the Dirac comb $\comb_1$ in \eqref{eq_def_comb}, the shift $\shift_{(\frac{q}{2}, \frac{p}{2})}$ in \eqref{eq_def_shift}, and the metaplectic operator $\mathcal{M}(\Bff{M})$ in \eqref{eq_M_kernel} or in Remark \ref{rem_bzero}.

Then there exists $\mu = \mu(\Bff{M}; (q,p)) \in \Bbb{C}\backslash \{0\}$ for which
\begin{equation}\label{eq_id_metapl}
	\shift_{(\frac{q}{2}, \frac{p}{2})}\mathcal{M}(\Bff{M})\comb_1 = \mu \comb_1
\end{equation}
if and only if $\Bff{M} \in SL(2,\Bbb{Z})$ and if $(q, p) \equiv (ab, cd)~(\opnm{mod} 2)$.
\end{theorem}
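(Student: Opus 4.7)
The strategy is to use the rigid characterization provided by Lemma \ref{lem_id_u1}: $\comb_1$ is, up to a multiplicative constant, the unique tempered distribution fixed by both $\shift_{(1, 0)}$ and $\shift_{(0, 1)}$. So \eqref{eq_id_metapl} holds for some $\mu \neq 0$ if and only if $v := \shift_{(q/2, p/2)} \mathcal{M}(\Bff{M}) \comb_1$ is fixed by both of these shifts. Each fixing condition will be computed by commuting $\shift_{(1, 0)}$ or $\shift_{(0, 1)}$ through $\shift_{(q/2, p/2)}$ via the composition law \eqref{eq_shift_composition}, then through $\mathcal{M}(\Bff{M})$ via Egorov \eqref{eq_Egorov}, and finally evaluating on $\comb_1$ via \eqref{eq_shift_integer_comb}.

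For sufficiency, assume $\Bff{M} \in SL(2, \Bbb{Z})$ and $(q, p) \equiv (ab, cd) \pmod{2}$. With the symplectic form $\omega((x_1, \xi_1), (x_2, \xi_2)) = \xi_1 x_2 - x_1 \xi_2$, the composition law gives $\shift_{(1, 0)} \shift_{(q/2, p/2)} = e^{-\pi \ii p} \shift_{(q/2, p/2)} \shift_{(1, 0)}$, and Egorov (with $\Bff{M}^{-1}(1, 0) = (d, -c)$) combined with \eqref{eq_shift_integer_comb} gives $\shift_{(1, 0)} \mathcal{M}(\Bff{M}) \comb_1 = e^{\pi \ii cd} \mathcal{M}(\Bff{M}) \comb_1$. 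Chaining produces $\shift_{(1, 0)} v = e^{\pi \ii (cd - p)} v$, which equals $v$ precisely when $p \equiv cd \pmod{2}$. The analogous computation for $\shift_{(0, 1)}$ (using $\Bff{M}^{-1}(0, 1) = (-b, a)$) gives $\shift_{(0, 1)} v = e^{\pi \ii (q + ab)} v$, which equals $v$ precisely when $q \equiv ab \pmod{2}$. Lemma \ref{lem_id_u1} then yields $v = \mu \comb_1$, and $\mu \neq 0$ because $\mathcal{M}(\Bff{M})$ and $\shift_{(q/2, p/2)}$ are invertible, so $v \neq 0$.

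For necessity, suppose \eqref{eq_id_metapl} holds with $\mu \neq 0$. For arbitrary $(j, k) \in \Bbb{Z}^2$, the same Egorov/composition/\eqref{eq_shift_integer_comb} computation shows that $\shift_{\Bff{M}(j, k)}(\mu \comb_1)$ is a complex scalar times $\mu \comb_1$; since $\mu \neq 0$, this means $\comb_1$ is itself an eigenfunction of $\shift_{\Bff{M}(j, k)}$. But the only $w \in \Bbb{R}^2$ for which $\comb_1$ is an eigenfunction of $\shift_w$ are $w \in \Bbb{Z}^2$ (a non-integer first coordinate displaces the support off of $\Bbb{Z}$, while a non-integer second coordinate makes the eigenvalue phase $e^{2\pi \ii \xi_0 k}$ non-constant over $k \in \Bbb{Z}$). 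Hence $\Bff{M}(\Bbb{Z}^2) \subseteq \Bbb{Z}^2$, and combining with $\det \Bff{M} = 1$ upgrades this to $\Bff{M} \in SL(2, \Bbb{Z})$. With $\Bff{M}$ integer-valued, the sufficiency eigenvalue identities run in reverse: the requirement $\shift_{(1, 0)} v = v$ forces $p \equiv cd \pmod{2}$, and $\shift_{(0, 1)} v = v$ forces $q \equiv ab \pmod{2}$.

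I foresee no serious obstacle; the only place demanding care is the bookkeeping of the half-integer phases from \eqref{eq_shift_composition} arising from the half-shift $\shift_{(q/2, p/2)}$, which must be aligned with the integer phases produced by $\shift_{\Bff{M}^{-1}(1, 0)}$ and $\shift_{\Bff{M}^{-1}(0, 1)}$ acting on $\comb_1$.
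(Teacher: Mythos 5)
Your proposal is correct and follows essentially the same route as the paper: necessity of $\Bff{M} \in SL(2,\Bbb{Z})$ by conjugating integer shifts through $\mathcal{M}(\Bff{M})$ and observing the support/periodicity constraints on $\comb_1$, and then establishing the parity condition (in both directions) via the eigenvalue phases $\ee^{\pi\ii(cd-p)}$ and $\ee^{\pi\ii(q+ab)}$ that arise when commuting $\shift_{(1,0)}$ and $\shift_{(0,1)}$ through $\shift_{(q/2,p/2)}\mathcal{M}(\Bff{M})$, together with Lemma~\ref{lem_id_u1}. The bookkeeping of half-integer phases matches the paper's own computation.
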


\begin{remark}
One could equivalently say that $\comb_1$ is an eigenfunction of $\mathcal{S}_{(\frac{q}{2}, \frac{p}{2})}\mathcal{M}(\Bff{M})$ as an operator on $\mathscr{S}'(\Bbb{R})$ if and only if $\Bff{M} \in SL(2,\Bbb{Z})$ and $(q, p) \equiv (ab, cd)~(\opnm{mod} 2)$. Section \ref{s_symmetries} is devoted to proving that $\mu(\Bff{M}; (q,p))$ is an eighth root of unity, which is equivalent to Theorem \ref{thm_mu_8}.
\end{remark}

\begin{proof}
Using \eqref{eq_shift_integer_comb}, the Egorov relation \eqref{eq_Egorov}, and \eqref{eq_shift_composition}, for any $j,k\in\Bbb{Z}$
\begin{align*}
	\shift_{(\frac{q}{2}, \frac{p}{2})}\mathcal{M}(\Bff{M})\comb_1 &= \ee^{\pi\ii jk}\shift_{(\frac{q}{2}, \frac{p}{2})} \mathcal{M}(\Bff{M}) \shift_{(j,k)} \comb_1
	\\ &= \ee^{\pi\ii jk}\shift_{(\frac{q}{2}, \frac{p}{2})}  \shift_{\Bff{M}(j,k)} \mathcal{M}(\Bff{M})\comb_1
	\\ &= \ee^{\pi\ii c_0}\shift_{\Bff{M}(j,k)}\shift_{(\frac{q}{2},\frac{p}{2})}\mathcal{M}(\Bff{M})\comb_1
\end{align*}
where $c_0 = jk + p(aj+bk) - (cj+dk)q$ is not used here. Therefore if \eqref{eq_id_metapl} holds, then for any $j,k \in \Bbb{Z}$
\[
	\comb_1 = \ee^{\pi\ii c_0}\shift_{\Bff{M}(j,k)}\comb_1,
\]
which can only hold if $\Bff{M}(j,k) \in \Bbb{Z}^2$ for all $j,k\in\Bbb{Z}$. (If not, $\shift_{\Bff{M}(j,k)}\comb_1$ will not be supported in $\Bbb{Z}$ or will not be $1$-periodic.) Therefore $\Bff{M} \in SL(2, \Bbb{Z})$ is a necessary condition for \eqref{eq_id_metapl}.

Let us suppose then that $\Bff{M} \in SL(2, \Bbb{Z})$. By the classical Lemma \ref{lem_id_u1}, it is enough to show that
\[
	\mathcal{S}_{(\frac{q}{2}, \frac{p}{2})}\mathcal{M}(\Bff{M})\comb_1
\]
is invariant under $\mathcal{S}_{(1,0)}$ and $\mathcal{S}_{(0,1)}$. Using \eqref{eq_shift_composition}, \eqref{eq_Egorov} where we apply $\Bff{M}^{-1}$, and \eqref{eq_shift_integer_comb},
\begin{align*}
	\shift_{(1,0)}\shift_{(\frac{q}{2}, \frac{p}{2})}\mathcal{M}(\Bff{M})\comb_1 &= \ee^{-\pi\ii p}\mathcal{S}_{(\frac{q}{2}, \frac{p}{2})}\mathcal{S}_{(1,0)}\mathcal{M}(\Bff{M})\comb_1 
	\\ &= \ee^{-\pi\ii p} \mathcal{S}_{(\frac{q}{2}, \frac{p}{2})}\mathcal{M}(\Bff{M})\mathcal{S}_{(d, -c)}\comb_1 
	\\ &= \ee^{-\pi\ii p + \pi\ii cd}\mathcal{S}_{(\frac{q}{2}, \frac{p}{2})}\mathcal{M}(\Bff{M})\comb_1.
\end{align*}
Similarly,
\begin{align*}
	\mathcal{S}_{(0,1)}\mathcal{S}_{(\frac{q}{2}, \frac{p}{2})}\mathcal{M}(\Bff{M})\comb_1 &= \ee^{\pi\ii q}\mathcal{S}_{(\frac{q}{2}, \frac{p}{2})}\mathcal{S}_{(0,1)}\mathcal{M}(\Bff{M})\comb_1 
	\\ &= \ee^{\pi\ii q} \mathcal{S}_{(\frac{q}{2}, \frac{p}{2})}\mathcal{M}(\Bff{M})\mathcal{S}_{(-b, a)}\comb_1 
	\\ &= \ee^{\pi\ii q + \pi\ii ab}\mathcal{S}_{(\frac{q}{2}, \frac{p}{2})}\mathcal{M}(\Bff{M})\comb_1.
\end{align*}
We see that $\shift_{(\frac{q}{2}, \frac{p}{2})}\mathcal{M}(\Bff{M})\comb_1$ is invariant under $\shift_{(1,0)}$ and $\shift_{(0,1)}$ if and only if $\ee^{-\pi\ii p + \pi\ii cd} = \ee^{\pi\ii q + \pi\ii ab} = 1$ if and only if $(q, p) \equiv (ab, cd)~(\opnm{mod} 2)$; therefore, when this holds, it is a multiple of the Dirac comb $\comb_1$. Since the multiple cannot be zero since $\shift_{(\frac{q}{2}, \frac{p}{2})}\mathcal{M}(\Bff{M})$ is invertible on $\mathscr{S}'(\Bbb{R})$, this proves the theorem.
\end{proof}

\begin{remark}\label{rem_sum_expression}
Having defined $\mu(\Bff{M}; (q,p))$ in Theorem \ref{thm_id_metapl}, we record the equivalent formula for $\mathcal{F}^\alpha \comb_r$ in terms of a sum of $\delta$-functions. Recall that we assume that $\ee^{\frac{\pi\ii}{2}\alpha} = \frac{1}{s}(\frac{a}{r} + \ii br)$ for $a, b \in \Bbb{Z}$ relatively prime and $s = \sqrt{a^2 + b^2}$, that $c, d \in \Bbb{Z}$ are such that $ad - bc = 1$, and that $t = \frac{ac}{r^2} + bdr^2$. Fix $q, p \in \Bbb{Z}$ such that $(q,p) \equiv (ab, cd)~(\opnm{mod} 2)$, and let $\mu = \mu\left(\begin{pmatrix} a & b \\ c & d\end{pmatrix}; (q,p)\right)$. To handle $\alpha \notin (-2, 2]$, we simply let $\eps = \eps(\alpha) = 1$ if $\alpha \in (-2, 2]~(\opnm{mod} 8)$ and $-1$ otherwise. We then solve for $\mathcal{F}^\alpha \comb_r$ in
\[
	\shift_{(\frac{q}{2}, \frac{p}{2})}\mathcal{V}_{\frac{1}{s}} \mathcal{W}_t \mathcal{F}^\alpha \comb_r = \eps\mu(\Bff{M}; (q,p))\comb_1
\]
to obtain
\begin{align*}
	\mathcal{F}^\alpha \comb_r(x) &= \eps\mu\mathcal{W}_{-t}\mathcal{V}_s \shift_{(-\frac{q}{2}, -\frac{p}{2})}\comb_1(x)
	\\ &= \eps \mu \mathcal{W}_{-t}\mathcal{V}_s \left(\sum_{k \in\Bbb{Z}}\ee^{-\frac{\pi\ii}{4}qp - \pi\ii px}\delta(x + \frac{q}{2} - k)\right)
	\\ &= \eps \mu \mathcal{W}_{-t}\mathcal{V}_s\left(\sum_{k \in \Bbb{Z} - \{\frac{q}{2}\}}\ee^{-\frac{\pi\ii}{4}qp - \pi\ii px}\delta(x-k)\right)
	\\ &= \eps \mu s^{-\frac{1}{2}}\sum_{k \in \Bbb{Z} - \{\frac{q}{2}\}}\ee^{-\frac{\pi\ii}{4}qp - \pi\ii psx-\pi\ii t x^2}\delta(x-\frac{k}{s})
	\\ &= \eps \mu s^{-\frac{1}{2}}\sum_{k \in \Bbb{Z} - \{\frac{q}{2}\}}\ee^{-\frac{\pi\ii}{4}qp - \pi\ii pk - \pi\ii \frac{t}{s^2} k^2}\delta(x-\frac{k}{s}).
\end{align*}
\end{remark}

\begin{definition}\label{def_antiderivative}
Taking $r = 1$ for simplicity, when $\cos\frac{\pi\alpha}{2}$ and $\sin\frac{\pi\alpha}{2}$ are linearly dependent over $\Bbb{Z}$, we define the antiderivative for $X > 0$ as
\[
	\Pi_\alpha(X) = \int_0^X \mathcal{F}^\alpha \comb_1(x)\,\dd x.
\]
More precisely, since $\delta$-functions are involved,
\[
	\Pi_\alpha(X) = \frac{1}{2}\left(\int_{[0, X]} \mathcal{F}^\alpha\comb_1(x)\,\dd x + \int_{(0, X)}\mathcal{F}^\alpha\comb_1(x)\,\dd x\right).
\]
We extend $\Pi_\alpha(X)$ to be an odd function, so in particular $\Pi_\alpha(0) = 0$.

For example, with this definition, if $X \in (k, k+1)$ for $k\in\Bbb{Z}$, then $\Pi_0(X) = k+\frac{1}{2}$, and $\Pi_0(k) = k$ for all $k \in \Bbb{Z}$.

The advantages of this definition are that $\Pi_\alpha$ should be an odd function, since $\mathcal{F}^\alpha \comb_1(x)$ is even (Remark \ref{rem_sign_evenness}) and that $\Pi_\alpha(X_1+X_2) = \Pi_\alpha(X_1) + \Pi_\alpha(X_2)$. 
\end{definition}

\subsection{And the functional equation for theta functions with half-integer characteristics}\label{ss_theta}

The Jacobi theta functions can be obtained by taking the duality product of Gaussians and (shifted) Dirac combs. Let
\[
	g_{(0, 0), \tau}(x) = \ee^{\pi\ii\tau x^2}
\]
and
\begin{equation}\label{eq_def_gaussian}
	\begin{aligned}
	g_{(x_0, \xi_0), \tau}(x) &= \shift_{(x_0, \xi_0)}g_{(0, 0), \tau}(x)
	\\ &= \ee^{-\pi\ii x_0\xi_0 + 2\pi\ii \xi_0 x + \pi\ii \tau(x-x_0)^2}.
	\end{aligned}
\end{equation}
When $\Im \tau > 0$ and $z \in \Bbb{C}$, we write
\begin{equation}\label{eq_def_theta_qp}
	\theta_{qp}(z,\tau) = \langle \shift_{(\frac{q}{2}, \frac{p}{2})}g_{(0, z), \tau}, \comb_1\rangle.
\end{equation}
This notation differs slightly from the notation used in \cite{Mumford_1983} but agrees for $\theta_{00}, \theta_{10}$, and $\theta_{01}$ and agrees more generally up to eighth roots of unity.

Testing Theorem \ref{thm_identify} against Gaussians gives the classical functional equation \cite[Thm.~7.1]{Mumford_1983} for Jacobi theta functions, which describes the transformation of these theta functions under the action of $SL(2,\Bbb{Z})$. Or, if the reader prefers, the identification of $\mu(\Bff{M}; (p,q))$ as an eighth root of unity is a consequence of this classical functional equation.

\begin{theorem}\label{thm_theta}[Theorem~7.1, \cite{Mumford_1983}]
Let $\Bff{M} = \begin{pmatrix} a & b \\ c & d\end{pmatrix}\in SL(2,\Bbb{Z})$ and let $q,p\in\Bbb{Z}$ be such that $(q,p) \equiv (ab, cd)~(\opnm{mod} 2)$. Then for any $z, \tau \in\Bbb{C}$ with $\Im \tau > 0$, let
\[
	z' = \frac{z}{a+b\tau}, \quad \tau' = \frac{c+d\tau}{a+b\tau},
\]
and $\mu = \mu(\Bff{M}; (q,p))$ from Theorem \ref{thm_id_metapl}. Then
\begin{equation}\label{eq_theta}
	\theta_{00}(z,\tau) = \bar{\mu}\ee^{-\frac{\pi\ii b}{a+b\tau}z^2}(a+b\tau)^{-1/2} \theta_{qp}(z', \tau').
\end{equation}
(The square root is taken with positive real part.)
\end{theorem}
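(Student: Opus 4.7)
The plan is to test the distributional identity of Theorem \ref{thm_id_metapl},
\[
    \shift_{(q/2,p/2)}\mathcal{M}(\Bff{M})\comb_1 = \mu\,\comb_1,
\]
against a Gaussian $g_{(0,z),\tau}$ with $\Im\tau > 0$, using the adjoint relation of Remark \ref{rem_unitary} to move $\shift_{(q/2,p/2)}^{-1}$ and $\mathcal{M}(\Bff{M})^{-1}$ onto the Gaussian side. The resulting identity then reads $\bar\mu\,\theta_{00}(z,\tau) = \langle \mathcal{M}(\Bff{M})^{-1}\shift_{(q/2,p/2)}^{-1}g_{(0,z),\tau},\comb_1\rangle$, and the content is to recognize the right-hand side as the prescribed scalar times $\theta_{qp}(z',\tau')$.

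The key computation is the action of $\mathcal{M}(\Bff{M})^{-1}$ on a Gaussian. When $b\ne 0$, I would plug $g_{(0,z),\tau}(y) = \ee^{2\pi\ii zy + \pi\ii\tau y^2}$ into the kernel of $\mathcal{M}(\Bff{M}^{-1})$ from \eqref{eq_M_kernel}, complete the square in $y$, and apply the standard Gaussian integral $\int \ee^{-\pi Ay^2 + 2\pi By}\,\dd y = A^{-1/2}\ee^{\pi B^2/A}$ (valid by analytic continuation since $\Im\tau>0$ ensures the relevant $\Re A > 0$). Using $ad-bc = 1$ to simplify the resulting quadratic exponent in $x$, I expect to obtain
\[
    \mathcal{M}(\Bff{M})^{-1} g_{(0,z),\tau}(x) = (a+b\tau)^{-1/2}\,\ee^{-\frac{\pi\ii b}{a+b\tau}z^2}\,g_{(0, z'),\tau'}(x),
\]
with $\tau' = (c+d\tau)/(a+b\tau)$, $z' = z/(a+b\tau)$, and the square root normalized to have positive real part. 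The degenerate case $b=0$ is handled directly from Remark \ref{rem_bzero} using the explicit action of $\mathcal{V}_{1/a}\mathcal{W}_{ac}$ on Gaussians, where the Möbius transformation degenerates to $\tau' = c/a + \tau/a^2$ and the scalar is $a^{-1/2} = (a+b\tau)^{-1/2}$ with $b=0$.

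The action of $\shift_{(q/2,p/2)}^{-1}$ on this Gaussian is explicit from the definition \eqref{eq_def_shift}: it produces $g_{(-q/2,-p/2+z'),\tau'}$ up to a phase which, once combined with the parity conditions on $(q,p)$ from Theorem \ref{thm_id_metapl}, matches the sign convention built into \eqref{eq_def_theta_qp}. Pairing this shifted Gaussian with $\comb_1$ is then, by definition, $\theta_{qp}(z',\tau')$ (or its complex conjugate, depending on the sesquilinearity convention, which is harmless since $\comb_1$ is real). Combining the two pieces yields \eqref{eq_theta} after regrouping constants.

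The main obstacle is the Gaussian integral in the first step: one must track two square roots (the $\sqrt{-\ii b}$ from the kernel and the $A^{-1/2}$ from the Gaussian integral) and verify that their product equals $(a+b\tau)^{1/2}$ in the positive-real-part branch. Everything else is bookkeeping: using $ad-bc=1$ to collapse cross terms into the fractional linear transformation, and tracking phases from \eqref{eq_shift_composition} and the sesquilinear pairing so that the constant on the right side is precisely $\bar\mu$ rather than $\mu$ or $\pm\mu$.
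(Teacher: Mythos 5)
Your high-level strategy — test the identity $\shift_{(q/2,p/2)}\mathcal{M}(\Bff{M})\comb_1 = \mu\comb_1$ against Gaussians and use the adjoint/unitarity relation of Remark \ref{rem_unitary} — is exactly the paper's strategy, but you have moved the operators to the wrong side, which creates a concrete error in the middle step.

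The paper pairs the identity with $\shift_{(q/2,p/2)}\mathcal{M}g_{(0,z),\tau}$ (equivalently, uses unitarity in the form $\langle g,\comb_1\rangle = \langle \shift\mathcal{M}g, \shift\mathcal{M}\comb_1\rangle$), so the computation needed is $\mathcal{M}g_{(0,z),\tau}$. By Lemma \ref{lem_M_on_gaussian}, Egorov and \eqref{eq_gaussian_shift_project}, this gives precisely
$(a+b\tau)^{-1/2}\ee^{-\frac{\pi\ii b}{a+b\tau}z^2}g_{(0,z'),\tau'}$ with $\tau' = (c+d\tau)/(a+b\tau)$. You instead pair with $g_{(0,z),\tau}$ itself and then move the operators across the bracket as inverses, so the computation you actually need is $\mathcal{M}^{-1}\shift^{-1}g_{(0,z),\tau}$. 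The formula you write,
\[
\mathcal{M}(\Bff{M})^{-1}g_{(0,z),\tau} = (a+b\tau)^{-1/2}\,\ee^{-\frac{\pi\ii b}{a+b\tau}z^2}\,g_{(0,z'),\tau'},
\]
is the formula for $\mathcal{M}(\Bff{M})$, not $\mathcal{M}(\Bff{M})^{-1}$. Since $\Bff{M}^{-1} = \begin{pmatrix} d & -b \\ -c & a\end{pmatrix}$, applying Lemma \ref{lem_M_on_gaussian} to $\Bff{M}^{-1}$ gives a prefactor $(d-b\tau)^{-1/2}$ and the M\"obius transform $\tau'' = \frac{-c+a\tau}{d-b\tau}$, not $(a+b\tau)^{-1/2}$ and $\tau'$; you can also see the discrepancy by noting that $\tau\mapsto\tau'$ and $\tau\mapsto\tau''$ are inverse fractional-linear maps, so they agree only if $\Bff{M}^2 = \pm I$.

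This is not just a bookkeeping slip: carrying the correct inverse formula through your equation $\bar\mu\,\theta_{00}(z,\tau) = \langle\mathcal{M}^{-1}\shift^{-1}g_{(0,z),\tau},\comb_1\rangle$ produces an expression in $\tau''$ and a shift by $\Bff{M}^{-1}(-q/2,z-p/2)$, which is not $\theta_{qp}(z',\tau')$; turning it into $\theta_{qp}(z',\tau')$ would require re-invoking Theorem \ref{thm_id_metapl} (contributing another $\mu$), and indeed a consistency check shows your left-hand side must equal $\bar\mu^2$ times the right-hand side of \eqref{eq_theta} after dividing out $\theta_{00}$, not $\bar\mu^0$ times it. The repair is to pair the identity with $\shift\mathcal{M}g_{(0,z),\tau}$ instead, obtaining $\theta_{00} = \bar\mu\langle\shift\mathcal{M}g_{(0,z),\tau},\comb_1\rangle$; then Lemma \ref{lem_M_on_gaussian} applies to $\mathcal{M}$ directly (no inversion needed), and the remaining shift $\shift_{(q/2,p/2)}$ is exactly the one in the definition \eqref{eq_def_theta_qp} of $\theta_{qp}$, so the bracket is $\theta_{qp}(z',\tau')$ by definition.
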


\begin{proof}
We use $\langle \cdot, \cdot\rangle$ to indicate the sesquilinear duality product between $\mathscr{S}(\Bbb{R})$ and $\mathscr{S}'(\Bbb{R})$. (See Remark \ref{rem_unitary}.) To simplify notation, let $\mathcal{M} = \mathcal{M}(\Bff{M})$ and $\mu = \mu(\Bff{M}; (q,p))$. Using Lemma \ref{lem_M_on_gaussian}, \eqref{eq_Egorov}, and \eqref{eq_gaussian_shift_project},
\[
	\mathcal{M}g_{(0,z), \tau} = (a+b\tau)^{-1/2}g_{(bz, dz), \tau'} = \ee^{-\pi\ii b(d-\tau'b)z^2}g_{(0, (d-\tau'b)z), \tau'}.
\]
One computes that
\[
	d-\tau'b = d- b\left(\frac{c+d\tau}{a+b\tau}\right) = \frac{ad - bc}{a+b\tau} = \frac{1}{a + b\tau}
\]
since $ad - bc = 1$. Therefore
\[
	\mathcal{M}g_{(0, z), \tau} = (a+b\tau)^{-1/2}\ee^{-\frac{\pi\ii b}{a+b\tau}z^2}g_{(0, z'),\tau'},
\]
and
\begin{align*}
	\theta_{00}(z, \tau) &= \langle g_{(0, z), \tau}, \comb_1\rangle
	\\ &= \langle \shift_{(\frac{q}{2}, \frac{p}{2})}\mathcal{M} g_{(0, z), \tau}, \shift_{(\frac{q}{2}, \frac{p}{2})}\mathcal{M}\comb_1\rangle
	\\ &= \ee^{-\frac{\pi\ii b}{a+b\tau}z^2}(a+b\tau)^{-1/2}\langle \shift_{(\frac{q}{2}, \frac{p}{2})} g_{(0, z'), \tau'}, \mu \comb_1\rangle
	\\ &= \bar{\mu}\ee^{-\frac{\pi\ii b}{a+b\tau}z^2}(a+b\tau)^{-1/2} \theta_{qp}(z', \tau'),
\end{align*}
which proves the theorem.
\end{proof}

\begin{remark}
This presentation of the functional equation for the Jacobi theta functions allows us to identify certain separate ``moving parts.'' Most importantly, the eighth root of unity belongs to the Dirac comb and its transformation under metaplectic operators, and it has nothing to do with $z$ or $\tau$. The linear fractional transformation giving $\tau'$ from $\tau$ and the factor $(a + b\tau)^{-1/2}$ are due to the transformation of a (centered) Gaussian under a metaplectic operator. 

Finally, the formula for $z'$ and the factor $\ee^{-\frac{\pi\ii b}{a+b\tau}z^2}$ comes first from the metaplectic operator, which transforms $(0, z)$ into $(bz, dz)$, and then from our insistence that the shift is of the form $(0, z')$, in momentum only. This (somewhat unnatural) requirement leads us to project $(bz, dz)$ onto $\{(0, \xi)\}_{\xi \in \Bbb{C}}$ along $\Lambda_\tau = \{(x, \tau x)\}_{x\in\Bbb{C}}$, which explains the dependence of $z'$ on $\tau$ as well as the factor $\ee^{-\frac{\pi\ii b}{a+b\tau}z^2}$. (See Lemma \ref{lem_equiv_shifts}.) One could naturally simplify this dependence in defining for $(x_0, \xi_0) \in \Bbb{R}^2$ and $\Im \tau > 0$ the function
\[
	\Theta_{qp}((x_0, \xi_0), \tau) = \langle \shift_{(\frac{q}{2}, \frac{p}{2})}g_{(x_0, \xi_0), \tau}, \comb_1\rangle
\]
for which
\[
	\Theta_{00}((x_0, \xi_0), \tau) = \bar{\mu}(a+b\tau)^{-1/2}\Theta_{qp}(\Bff{M}(x_0, \xi_0), \tau').
\]
But, of course, one could eliminate $\tau'$ and $(a+b\tau)^{-1/2}$ as well by writing Theorem \ref{thm_id_metapl}.
\end{remark}

\subsection{Lemmas on the metaplectic operator $\mathcal{M}$}

In this section, we collect a number of elementary lemmas on metaplectic operators and their effect on Gaussians. The results of principal interest are \eqref{eq_FrFT_to_M} relating Theorem \ref{thm_identify} to Theorem \ref{thm_id_metapl} and the effect of a metaplectic operator on a Gaussian.

\begin{lemma}\label{lem_M_Z}
Let $r > 0$ and let $\alpha \in (-2, 2]$ be such that $\ee^{\frac{\pi\ii}{2}\alpha} = \frac{1}{s}(a/r+\ii br)$ for $s > 0$ and $a,b\in\Bbb{Z}$ relatively prime, as in \eqref{eq_def_rho_s}. Let $t = ac/r^2 + bdr^2$ as in \eqref{eq_def_t} and let $\Bff{M} = \begin{pmatrix} a & b \\ c & d \end{pmatrix}\in SL(2,\Bbb{Z})$. Recall the metaplectic operators $\mathcal{F}^\alpha$, $\mathcal{V}_{\rho}$, and $\mathcal{W}_{\tau}$ from Definition \ref{def_FrFT}, \eqref{eq_def_op_Vr}, and \eqref{eq_def_op_Wt}. Then
\[
	\mathcal{V}_{1/s}\mathcal{W}_t\mathcal{F}^\alpha \mathcal{V}_{1/r} = \mathcal{M}(\Bff{M})
\]
defined in \eqref{eq_M_kernel}.
\end{lemma}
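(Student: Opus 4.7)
The strategy is to directly verify the identity by computing the composition on both sides. I split it into a linear-algebraic step (identifying the associated symplectic matrix) and an analytic step (fixing the sign of the metaplectic lift).

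\emph{Step 1: the matrix product.} I first compute $\Bff{V}_{1/s}\Bff{W}_t\Bff{F}_\alpha\Bff{V}_{1/r}$ from \eqref{eq_def_canon_Vr}, \eqref{eq_def_canon_Wt}, and \eqref{eq_def_F_canon}, substituting $\cos(\pi\alpha/2) = a/(rs)$ and $\sin(\pi\alpha/2) = br/s$ from \eqref{eq_def_rho_s}. A short multiplication gives
\[
	\Bff{F}_\alpha\Bff{V}_{1/r} = \begin{pmatrix} a/s & b/s \\ -br^2/s & a/(sr^2)\end{pmatrix},
\]
and applying $\Bff{W}_t$ with $t = ac/r^2 + bdr^2$ turns the bottom row into $(cs, ds)$; here the simplifications use $ad-bc = 1$ (to convert $ad-1$ to $bc$) and $s^2 r^2 = a^2 + b^2r^4$ (from the definition of $s$). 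Multiplying by $\Bff{V}_{1/s}$ scales the top row by $s$ and the bottom by $1/s$, producing $\Bff{M}$ exactly.

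\emph{Step 2: fixing the sign (generic case).} Since both sides are metaplectic operators associated to $\Bff{M}$, they differ at most by a sign. I resolve this by computing the integral kernel of the left-hand side directly and comparing to \eqref{eq_M_kernel}. Assuming $\alpha \in (-2,2]$ with $\alpha \notin \{0,2\}$ (so $b \neq 0$), I unfold $\mathcal{V}_{1/s}\mathcal{W}_t\mathcal{F}^\alpha\mathcal{V}_{1/r}f(x)$, perform the substitution $y = rz$ inside the fractional Fourier integral, and collect powers of $x$ and $z$. Using $\frac{a}{br^2} + t = \frac{ds^2}{b}$ (again by $ad-bc=1$ and $s^2 r^2 = a^2 + b^2 r^4$), the total phase collapses to $\frac{\pi\ii}{b}(dx^2 - 2xz + az^2)$, which is exactly the phase in \eqref{eq_M_kernel}. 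For the prefactor, I need
\[
	\frac{1}{\sqrt{s}} \cdot \frac{\sqrt{r}}{\sqrt{\ii\sin(\pi\alpha/2)}} = \frac{1}{\sqrt{\ii b}},
\]
which follows from the identity $\sqrt{\ii\sin(\pi\alpha/2)} = \sqrt{\ii b}\,\sqrt{r/s}$ under the convention $\Re\sqrt{\cdot} > 0$. This requires a short case check: for $\alpha \in (0,2)$ both $b$ and $\sin(\pi\alpha/2)$ are positive so the square roots lie in the first quadrant, while for $\alpha \in (-2,0)$ both are negative so the square roots lie in the fourth quadrant, and the identity holds on the nose in either case.

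\emph{Step 3: the degenerate cases $\alpha \in \{0,2\}$.} When $\alpha = 0$ or $\alpha = 2$, the integral formula for $\mathcal{F}^\alpha$ is not available, and the relation $e^{\pi\ii\alpha/2} = \tfrac{1}{s}(a/r + \ii br)$ forces $b = 0$ and $a = \pm 1$, so $\Bff{M}$ falls into the regime of Remark \ref{rem_bzero}. For $\alpha = 0$ one gets $s = 1/r$, $t = c/r^2$, and the identity $\mathcal{V}_r\mathcal{W}_{c/r^2}\mathcal{V}_{1/r} = \mathcal{W}_c$ is a one-line computation matching $\mathcal{M}(\Bff{M})$. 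For $\alpha = 2$ one uses $\mathcal{F}^2 f(x) = -\ii f(-x)$ and checks analogously that $\mathcal{V}_r\mathcal{W}_{-c/r^2}\mathcal{F}^2\mathcal{V}_{1/r}f(x) = -\ii e^{-\pi\ii c x^2}f(-x)$, which is exactly the expression for $\mathcal{M}(\Bff{M})$ given in Remark \ref{rem_bzero} with the minus sign.

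The only subtle point is the sign verification in Step 2: the rest is bookkeeping with the identities $ad - bc = 1$ and $s^2 r^2 = a^2 + b^2 r^4$. I expect the branch-of-square-root argument to be the part that deserves the most care, since it is precisely this choice that distinguishes $\mathcal{M}(\Bff{M})$ from $-\mathcal{M}(\Bff{M})$ and that later pins down the sign in Theorem \ref{thm_mu_8}.
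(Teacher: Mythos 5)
Your proof is correct and follows essentially the same route as the paper's: both reduce to computing the integral kernel of $\mathcal{V}_{1/s}\mathcal{W}_t\mathcal{F}^\alpha\mathcal{V}_{1/r}$ via the change of variables $y \mapsto y/r$ and the identity $\frac{a}{r^2} + bt = ds^2$ derived from $ad-bc=1$. Your Step~1 (the symplectic matrix product) is logically redundant once Step~2 is done, since the kernel computation already fixes both phase and sign, but it is a reasonable way to organize the argument, and the paper itself observes the matrix identity as an alternative route in a remark following Corollary~\ref{cor_M_compose_VW}. One genuine improvement: you treat the boundary cases $\alpha \in \{0,2\}$ carefully via Remark~\ref{rem_bzero}, whereas the paper waves at $\alpha = 2$ as trivial (with a small slip, asserting $s=1$ where in fact $s = 1/r$) and does not mention $\alpha = 0$ at all.
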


\begin{proof}
We use the Mehler formula \eqref{eq_def_op_FrFT}; the case $\alpha = 2$, when $\sin \frac{\pi\alpha}{2} = 0$, is trivial because $s = 1$ and $\mathcal{F}^\alpha f(x) = -f(x)$. Along with the change of variables $y' = y/r$, we obtain
\[
	\begin{aligned}
	\mathcal{V}_{1/s}\mathcal{W}_t\mathcal{F}^\alpha \mathcal{V}_{1/r} f(x) &= \mathcal{V}_{1/s} \ee^{\pi\ii tx^2}\frac{1}{\sqrt{\ii br/s}}\int \ee^{\frac{\pi\ii}{br/s}(\frac{a}{rs}x^2 - 2xy +\frac{a}{rs}y^2)}\frac{1}{\sqrt{r}}f(y/r)\,\dd y
	\\ &= \frac{1}{\sqrt{s}}\ee^{\frac{\pi\ii t}{s^2}x^2}\frac{1}{\sqrt{\ii b r/s}} \int \ee^{\frac{\pi\ii}{b}(\frac{a}{r^2s^2}x^2 - 2xy' + a(y')^2)}\sqrt{r}f(y')\,\dd y'
	\\ &= \frac{1}{\sqrt{\ii b}}\int \ee^{\frac{\pi\ii}{b}(\frac{1}{s^2}(\frac{a}{r^2} + bt)x^2 - 2xy' + a(y')^2)}f(y')\,\dd y'.
	\end{aligned}
\]
It suffices to show that the coefficient of $x^2$ in the exponent is in fact $d$. Using $bc+1 = ad$,
\[
	\frac{a}{r^2} + bt = \frac{a}{r^2} + \frac{bac}{r^2} + b^2dr^2 = \frac{a}{r^2}(bc+1) + (br)^2 d = ((a/r)^2 + (br)^2)d = s^2d.
\]
This confirms that the coefficient of $x^2$ is $d$ and completes the proof of the lemma.
\end{proof}

In order to compute the effect of a metaplectic operator on a Gaussian
\[
	g_{(x_0, \xi_0), \tau} = \shift_{(x_0, \xi_0)}g_{0, \tau}, \quad g_{0, \tau}(x) = \ee^{\pi\ii x^2},
\]
we begin with the centered Gaussian $g_{(0,0), \tau}$.

\begin{lemma}\label{lem_M_on_gaussian}
Let $g_{(0,0), \tau}(x) = \ee^{\pi\ii\tau x^2}$ and suppose that $\Im \tau > 0$. Let $\Bff{M} = \begin{pmatrix} a & b \\ c & d \end{pmatrix} \in SL(2,\Bbb{R})$. Define
\[
	\tau' = \frac{c+d\tau}{a + b\tau}.
\]
Then, with the metaplectic operator $\mathcal{M}(\Bff{M})$ defined as in \eqref{eq_M_kernel},
\[
	\mathcal{M}(\Bff{M})g_{(0,0), \tau} = (a+b\tau)^{-1/2}g_{(0,0), \tau'}.
\]
where the square root is chosen such that $\Re((a+b\tau)^{-1/2}) > 0$.
\end{lemma}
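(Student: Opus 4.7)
The plan is to substitute the Gaussian into the integral kernel \eqref{eq_M_kernel}, which in the case $b\neq 0$ reduces the problem to evaluating a quadratic Gaussian integral. First I write
\[
	\mathcal{M}(\Bff{M})g_{(0,0),\tau}(x) = \frac{1}{\sqrt{\ii b}}\int \ee^{\frac{\pi\ii}{b}(dx^2 - 2xy + ay^2) + \pi\ii \tau y^2}\,\dd y,
\]
so that the coefficient of $y^2$ in the exponent is $\pi\ii(a+b\tau)/b$. Since $\Im\tau>0$, the real part of $-\pi\ii(a+b\tau)/b$ equals $\pi\Im\tau > 0$, so the integral converges absolutely.

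Next I complete the square in $y$: the quadratic-and-linear part of the exponent becomes
\[
	\pi\ii\frac{a+b\tau}{b}\left(y - \frac{x}{a+b\tau}\right)^2 - \frac{\pi\ii x^2}{b(a+b\tau)}.
\]
Combining the new $x^2$ term with the original $\pi\ii dx^2/b$, and using $ad-bc = 1$ so that $d(a+b\tau) - 1 = bc + bd\tau = b(c+d\tau)$, I obtain exactly $\pi\ii\tau' x^2$ with $\tau' = (c+d\tau)/(a+b\tau)$. The Gaussian integral over $y$ evaluates to a square root of $\ii b/(a+b\tau)$, which when divided by $\sqrt{\ii b}$ yields $(a+b\tau)^{-1/2}$, modulo a choice of branch. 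This gives the claimed identity up to a sign.

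The main obstacle is justifying that the branch of $(a+b\tau)^{-1/2}$ produced by this computation is the one with positive real part. I would handle this by observing that both sides of the identity are holomorphic functions of $\tau$ on the upper half plane $\{\Im\tau>0\}$: the left-hand side because it is an absolutely convergent Gaussian integral depending holomorphically on $\tau$, and the right-hand side by inspection once the branch is fixed. Since $a+b\tau$ has positive imaginary part times $\operatorname{sgn}(b)$ and never vanishes on the upper half plane, the principal branch of $(a+b\tau)^{-1/2}$ with $\Re(a+b\tau)^{-1/2}>0$ is single-valued there. It then suffices to check the identity at one convenient point, for example $\tau = \ii$ if $b>0$ (or rescale appropriately), where both sides can be computed directly from the Fourier-transform-like normalization of $\mathcal{M}$, and to invoke analytic continuation.

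Finally, the case $b=0$ is handled separately using Remark \ref{rem_bzero}: then $\Bff{M} = \pm\begin{pmatrix} a & 0 \\ c & 1/a\end{pmatrix}$, and $\tau' = c/a + \tau/a^2$; a direct computation of $\mathcal{V}_{1/a}\mathcal{W}_{ac}g_{(0,0),\tau}$ gives $a^{-1/2}g_{(0,0),\tau'}$, matching $(a+b\tau)^{-1/2} = a^{-1/2}$ with positive real part since $a>0$ by convention in Remark \ref{rem_bzero}. (The sign choice in Remark \ref{rem_bzero} gives the two metaplectic operators $\pm \mathcal{M}(\Bff{M})$.)
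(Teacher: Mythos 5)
Your main computation is the same as the paper's: substitute into the kernel \eqref{eq_M_kernel}, complete the square in $y$, evaluate the Gaussian integral, and use $ad-bc=1$ to identify the resulting coefficient of $x^2$ with $\tau'$. Where you diverge is in fixing the branch. The paper tracks arguments directly through the Gaussian integral: since $\Re\bigl(-\ii(a/b+\tau)\bigr)=\Im\tau>0$, the root $\sqrt{-\ii(a/b+\tau)}$ has argument in $(-\pi/4,\pi/4)$; since $\arg\sqrt{\ii b}=\pm\pi/4$, the product $\sqrt{\ii b}\,\sqrt{-\ii(a/b+\tau)}$ has argument in $(-\pi/2,\pi/2)$ and squares to $a+b\tau$, so it is exactly the root of $a+b\tau$ with positive real part. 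Your analytic-continuation plan is a valid alternative in outline: the sign discrepancy is a continuous $\{\pm1\}$-valued function of $\tau$ on the connected upper half-plane, hence constant, so one base point suffices. The gap is in the final step: you do not actually carry out the check, and the point $\tau=\ii$ you suggest is not convenient — for a general $\Bff{M}$ with $b\neq0$, the integral $\int\ee^{\pi\ii(a+\ii b)y^2/b}\,\dd y$ at $\tau=\ii$ requires precisely the same argument-tracking as the general case, so nothing is saved. A point that genuinely closes the gap is $\tau=-a/b+\ii T$ with $T>0$: then $a+b\tau=\ii bT$, the $y$-integral reduces to the manifestly positive $\int\ee^{-\pi Ty^2}\,\dd y=T^{-1/2}$, and the total prefactor is $(\ii b)^{-1/2}T^{-1/2}=(a+b\tau)^{-1/2}$ with positive real part, as required. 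With that repair your proof is complete; your handling of the separate case $b=0$ via Remark \ref{rem_bzero} is a harmless addition the paper's lemma (stated for the kernel form) does not cover explicitly.
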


\begin{proof}
This is a direct computation:
\begin{align*}
	\mathcal{M}(\Bff{M})g_{(0,0), \tau}(x) &= \frac{1}{\sqrt{\ii b}} \int \ee^{\frac{\pi\ii}{b}(dx^2 - dxy + ay^2)+\pi\ii\tau y^2}\,\dd y
	\\ &= \frac{1}{\sqrt{\ii b}}\ee^{\frac{\pi\ii b}{d}x^2}\int \ee^{\pi\ii(\tau + \frac{a}{b})y^2 - \frac{2\pi\ii}{b}xy}\,\dd y
	\\ &= \frac{1}{\sqrt{\ii b}\sqrt{-\ii (a/b+\tau)}} \ee^{\frac{\pi\ii b}{d}x^2 - \frac{\pi\ii}{b(a+b\tau)}x^2}.
\end{align*}
We check the sign of the coefficient: in order to integrate the Gaussian, we have used that 
\[
	\pi\ii (\tau + \frac{a}{b}) = -\pi(-\ii(\tau + \frac{a}{b}))
\]
where $\Re (-\ii(\tau + \frac{a}{b})) > 0$. The square root is therefore chosen such that 
\[
|\arg(\sqrt{-\ii (a/b + \tau)})| < \pi/4.
\]
Because $\mathcal{M}$ is defined with $\Re \sqrt{\ii b} > 0$, meaning that $\arg\sqrt{\ii b} = \pm \frac{\pi\ii}{4}$, we have that
\[
	\left|\arg\left(\sqrt{\ii b}\sqrt{-\ii (a/b+\tau)}\right)\right| < \frac{\pi}{2}.
\]
Therefore
\[
	\frac{1}{\sqrt{\ii b}\sqrt{-\ii (a/b+\tau)}} = (a+b\tau)^{-1/2}
\]
with positive real part.

The coefficient of $\pi\ii x^2$ in the exponent is
\[
	\frac{1}{b}\left(d-\frac{1}{a + b\tau}\right) = \frac{1}{b}\left(\frac{bd + ad-1}{a + b\tau}\right) = \tau'
\]
because $ad-1 = bc$. This completes the proof of the lemma.
\end{proof}

\begin{remark}\label{rem_gaussians_suffice}
For any $\tau$ with $\Im \tau > 0$ fixed, the set of Gaussians $\{g_{\Bff{v}, \tau}\::\: \Bff{v} \in\Bbb{R}^2\}$ has dense span in $L^2(\Bbb{R})$; indeed, if $\langle f, g_{\Bff{v}, \tau}\rangle = 0$ for all $\Bff{v}\in\Bbb{R}^2$, a modified Bargmann transform of $f$, like the one used in Section \ref{s_Bargmann}, vanishes identically. The relation
\[
	\mathcal{M}(\Bff{M})g_{\Bff{v}, \tau} = (a+b\tau)^{-1/2}g_{\Bff{Mv}, \tau},
\]
coming from the Egorov relation \eqref{eq_Egorov} and Lemma \ref{lem_M_on_gaussian}, therefore suffices as a definition of $\mathcal{M}(\Bff{M})$. Note that this definition does not require $b \neq 0$, but the choice of sign when $a = -1$ and $b = 0$ is still somewhat arbitrary.

However, there is some ambiguity in that many different $\Bff{v}$ give, up to constant multiples, the same $g_{\Bff{v}, \tau}$. These $\Bff{v}$ are equal modulo the plane $\{(x, \tau x)\}_{x\in\Bbb{C}}$ associated with the Gaussian $g_{0, \tau}$; see \cite[Sect.~5]{Hormander_1995}.
\end{remark}

\begin{lemma}\label{lem_equiv_shifts}
	Let $(x_0, \xi_0), (y_0, \eta_0) \in \Bbb{C}^2$ and let $\tau \in \Bbb{C}$. Then there exists $c_0 \in \Bbb{C}$ such that
	\[
		g_{(x_0, \xi_0), \tau} = c_0 g_{(y_0, \eta_0), \tau}
	\]
	if and only if $\xi_0 - \eta_0 = \tau (x_0 - y_0)$. In this case, $c_0 = \ee^{\pi\ii(\xi_0 y_0 - \eta_0 x_0)}$.
\end{lemma}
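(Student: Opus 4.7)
The strategy is a direct calculation from the explicit formula \eqref{eq_def_gaussian}. Since both $g_{(x_0, \xi_0), \tau}$ and $g_{(y_0, \eta_0), \tau}$ are everywhere-nonvanishing complex exponentials, the identity $g_{(x_0, \xi_0), \tau} = c_0\, g_{(y_0, \eta_0), \tau}$ for a constant $c_0$ is equivalent to demanding that the pointwise quotient $g_{(x_0, \xi_0), \tau}(x) / g_{(y_0, \eta_0), \tau}(x)$ be independent of $x$.

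First I would form this quotient. The quadratic-in-$x$ term $\pi\ii \tau x^2$ appears in exactly the same way in the exponents of both numerator and denominator, so it cancels, leaving an exponential whose exponent is at most affine in $x$. Using the algebraic identity $(x-x_0)^2 - (x-y_0)^2 = -2(x_0-y_0)x + (x_0^2 - y_0^2)$ together with the linear pieces $2\pi\ii \xi_0 x$ and $2\pi\ii \eta_0 x$ coming from \eqref{eq_def_gaussian}, the coefficient of $x$ in the exponent of the ratio works out to $2\pi\ii\bigl[(\xi_0 - \eta_0) - \tau(x_0 - y_0)\bigr]$. The quotient is therefore constant in $x$ if and only if $\xi_0 - \eta_0 = \tau(x_0 - y_0)$, which settles the ``if and only if'' part of the statement.

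Assuming that condition, it remains to evaluate the surviving $x$-independent part of the exponent, which reads $-\pi\ii x_0 \xi_0 + \pi\ii y_0 \eta_0 + \pi\ii \tau (x_0-y_0)(x_0+y_0)$, where the last term comes from the constant part of $(x-x_0)^2 - (x-y_0)^2$. Substituting the constraint $\tau(x_0-y_0) = \xi_0 - \eta_0$ into this quadratic piece and expanding, the terms $\pm \pi\ii \xi_0 x_0$ and $\pm \pi\ii \eta_0 y_0$ cancel pairwise, and one is left with $\pi\ii(\xi_0 y_0 - \eta_0 x_0)$, giving $c_0 = \ee^{\pi\ii(\xi_0 y_0 - \eta_0 x_0)}$ as claimed.

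No conceptual obstacle is anticipated; this lemma is purely a bookkeeping exercise, arising because the phase factor $\ee^{-\pi\ii x_0 \xi_0}$ built into \eqref{eq_def_shift} interacts with the cross-terms from completing the square. The only real point of care is tracking signs when expanding $(x-x_0)^2 - (x-y_0)^2$ and then verifying the symmetric cancellation that produces the skew-symmetric expression $\xi_0 y_0 - \eta_0 x_0$ in the final constant.
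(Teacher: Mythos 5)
Your proof is correct. The computation is sound: the $\pi\ii\tau x^2$ term cancels in the pointwise quotient, the coefficient of $x$ in the remaining exponent is $2\pi\ii\bigl[(\xi_0 - \eta_0) - \tau(x_0 - y_0)\bigr]$, and under the vanishing condition the constant term collapses to $\pi\ii(\xi_0 y_0 - \eta_0 x_0)$ by the cancellation of $\pm\pi\ii\xi_0 x_0$ and $\pm\pi\ii\eta_0 y_0$ after expanding $(\xi_0-\eta_0)(x_0+y_0)$.

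The route, however, differs from the paper's. The paper first treats the special case $(y_0,\eta_0)=(0,0)$, where completing the square shows $g_{(x_0,\xi_0),\tau}=g_{(0,0),\tau}$ iff $\xi_0=\tau x_0$ with constant $1$, and then reduces the general statement to this via the shift composition law \eqref{eq_shift_composition}: writing $g_{(x_0,\xi_0),\tau}=c_0 g_{(y_0,\eta_0),\tau}$ as $g_{0,\tau}=c_0\,\shift_{-(x_0,\xi_0)}\shift_{(y_0,\eta_0)}g_{0,\tau}$ produces the factor $\ee^{-\pi\ii(\xi_0 y_0-\eta_0 x_0)}$ immediately. Your argument avoids the shift algebra and works directly with the quotient of exponentials; it is self-contained and perhaps more transparent for a reader unfamiliar with the phase-space shifts. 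The paper's reduction buys consistency with the metaplectic bookkeeping developed throughout, letting the skew-symmetric factor $\xi_0 y_0 - \eta_0 x_0$ arise from \eqref{eq_shift_composition} rather than from an ad hoc expansion. Both are correct one-paragraph verifications.
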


\begin{proof}
	It is instructive to begin with the case $(y_0, \eta_0) = (0,0)$, where it is evident that
	\begin{align*}
		g_{(x_0, \xi_0), \tau}(x) &= \ee^{-\pi\ii x_0\xi_0 + 2\pi\ii \xi_0 x + \pi\ii\tau(x-x_0)^2}
		\\ &= \ee^{-\pi\ii x_0(\xi_0 - \tau x_0) + 2\pi\ii (\xi_0 - \tau x_0)x + \pi\ii \tau x^2}
		\\ &= g_{(0,0), \tau}(x)
	\end{align*}
	if and only if $\xi_0 = \tau x_0$ (and in this case the constant is $c_0 = 1$).
	
	The general result follows from writing the equivalent statement
	\begin{align*}
		g_{0, \tau} &= c_0\shift_{-(x_0, \xi_0)}\shift_{(y_0, \eta_0)} g_{0, \tau}
		\\ &= c_0\ee^{-\pi\ii(\xi_0 y_0 - \eta_0 x_0)}g_{(y_0-x_0, \eta_0-\xi_0), \tau}
	\end{align*}
	and applying the previous special case.
\end{proof}

A consequence, which may be checked directly, is that, when $\tau \neq 0$ and if $y_0 = x_0 - \xi_0/\tau$ and $\eta_0 = \xi_0 - \tau x_0$,
\begin{equation}\label{eq_gaussian_shift_project}
	g_{(x_0, \xi_0), \tau} = \ee^{-\pi\ii x_0 \eta_0} g_{(0, \eta_0), \tau} = \ee^{\pi\ii \xi_0 y_0}g_{(y_0, 0), \tau}.
\end{equation}

As an application of the effect of a metaplectic operator on a Gaussian, we finish this subsection by proving some well-known formulas for compositions of metaplectic operators.

\begin{lemma}\label{lem_M_compose}
Let $\Bff{M}_1\Bff{M}_2 = \Bff{M}_3$ for $\Bff{M}_j \in SL(2,\Bbb{R})$ with
\[
	\Bff{M}_j = 
		\begin{pmatrix}
			a_j & b_j \\
			c_j & d_j
		\end{pmatrix},
		\quad j = 1,2,3.
\]
Then there exists a $\sigma \in \{\pm 1\}$ such that
\[
	\mathcal{M}(\Bff{M}_1)\mathcal{M}(\Bff{M}_2) = \sigma \mathcal{M}(\Bff{M}_3).
\]
For any $\tau \in \Bbb{C}$ with $\Im \tau > 0$,
\[
	\sigma = (a_2 + b_2\tau)^{-1/2}
	\left(\frac{a_3 + b_3\tau}{a_2+b_2\tau}\right)^{-1/2}
	(a_3 + b_3\tau)^{1/2},
\]
where each of the three terms has positive real part.
\end{lemma}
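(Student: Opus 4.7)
The plan is to prove the lemma in two stages. First, use the Egorov relation \eqref{eq_Egorov} to force $\mathcal{M}(\Bff{M}_1)\mathcal{M}(\Bff{M}_2) = \sigma\,\mathcal{M}(\Bff{M}_3)$ for some scalar $\sigma$. Second, compute $\sigma$ by tracking the action on the single Gaussian $g_{(0,0),\tau}$ via Lemma \ref{lem_M_on_gaussian}.

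For the first stage, applying \eqref{eq_Egorov} to each factor in turn gives, for every $\Bff{v}\in\Bbb{R}^2$,
\[
\mathcal{M}(\Bff{M}_1)\mathcal{M}(\Bff{M}_2)\shift_{\Bff{v}} = \mathcal{M}(\Bff{M}_1)\shift_{\Bff{M}_2\Bff{v}}\mathcal{M}(\Bff{M}_2) = \shift_{\Bff{M}_3\Bff{v}}\mathcal{M}(\Bff{M}_1)\mathcal{M}(\Bff{M}_2),
\]
so $\mathcal{M}(\Bff{M}_1)\mathcal{M}(\Bff{M}_2)\mathcal{M}(\Bff{M}_3)^{-1}$ commutes with every phase-space shift. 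By Remark \ref{rem_gaussians_suffice} (or the standard Schur-type fact that the only automorphism of $\mathscr{S}'(\Bbb{R})$ commuting with all $\shift_{\Bff{v}}$ is a scalar), this intertwiner is a scalar $\sigma\in\Bbb{C}$.

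For the second stage, I would fix $\tau$ with $\Im\tau > 0$ and apply Lemma \ref{lem_M_on_gaussian} twice on the left, producing a factor $(a_2+b_2\tau)^{-1/2}(a_1+b_1\tau_2')^{-1/2}$, with $\tau_2' = (c_2+d_2\tau)/(a_2+b_2\tau)$. Routine algebra using the entries $(a_3,b_3) = (a_1a_2+b_1c_2,\,a_1b_2+b_1d_2)$ of $\Bff{M}_1\Bff{M}_2$ collapses $a_1+b_1\tau_2'$ to $(a_3+b_3\tau)/(a_2+b_2\tau)$, and the two composed Möbius transforms collapse to the single map $\tau \mapsto (c_3+d_3\tau)/(a_3+b_3\tau)$, matching the single application of Lemma \ref{lem_M_on_gaussian} to $\mathcal{M}(\Bff{M}_3)$ which contributes a factor $(a_3+b_3\tau)^{-1/2}$. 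Dividing yields the stated formula for $\sigma$, with each square root taken with positive real part in accordance with the conventions of Lemma \ref{lem_M_on_gaussian}.

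Finally, squaring the formula collapses by cancellation to $\sigma^2 = 1$, and since $\sigma$ is continuous in $\tau$ on the connected upper half plane, it is constant there and equals $+1$ or $-1$. The main subtlety is the bookkeeping of square-root branches: the product of three principal square roots need not itself have positive real part, which is precisely why the formula is presented as a triple product rather than a single simplified expression. The degenerate case where some $b_j=0$ presents no real obstacle, since Lemma \ref{lem_M_on_gaussian} remains valid when $\mathcal{M}(\Bff{M}_j)$ is interpreted via Remark \ref{rem_bzero}, as one checks directly.
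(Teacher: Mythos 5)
Your proof is correct and follows essentially the same route as the paper: both reduce to comparing actions on Gaussians via Lemma \ref{lem_M_on_gaussian} and the Egorov relation \eqref{eq_Egorov}, derive the formula for $\sigma$ from the same M\"obius-transform and factor cancellations $a_1 + b_1\tau_1 = (a_3+b_3\tau)/(a_2+b_2\tau)$, and conclude $\sigma\in\{\pm1\}$ from $\sigma^2 = 1$ together with continuity in $\tau$. The only cosmetic deviation is that you first invoke a Schur-type/irreducibility argument to see that the intertwiner $\mathcal{M}(\Bff{M}_1)\mathcal{M}(\Bff{M}_2)\mathcal{M}(\Bff{M}_3)^{-1}$ is a scalar and then test on the single Gaussian $g_{(0,0),\tau}$, whereas the paper skips that step and tests directly against the full family $\{g_{\Bff{v},\tau}\}_{\Bff{v}\in\Bbb{R}^2}$, relying on its density (Remark \ref{rem_gaussians_suffice}).
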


\begin{corollary} The set $\{\pm \mathcal{M}(\Bff{M}) \::\: \Bff{M} \in G\}$ is closed for any subgroup $G$ of $SL(2,\Bbb{R})$.
\end{corollary}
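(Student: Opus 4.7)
The plan is to verify directly that the set $H = \{\pm \mathcal{M}(\Bff{M}) : \Bff{M} \in G\}$ is closed under composition and inversion (and contains the identity), i.e., is a subgroup of the metaplectic group; Lemma \ref{lem_M_compose} does essentially all of the work.

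First I would check closure under products. Given two elements $\eps_1 \mathcal{M}(\Bff{M}_1)$ and $\eps_2 \mathcal{M}(\Bff{M}_2)$ of $H$ with $\eps_j \in \{\pm 1\}$ and $\Bff{M}_j \in G$, apply Lemma \ref{lem_M_compose} to write
\[
	(\eps_1 \mathcal{M}(\Bff{M}_1))(\eps_2 \mathcal{M}(\Bff{M}_2)) = \eps_1 \eps_2 \sigma \, \mathcal{M}(\Bff{M}_1 \Bff{M}_2),
\]
with $\sigma \in \{\pm 1\}$. Since $G$ is a subgroup, $\Bff{M}_1 \Bff{M}_2 \in G$, so the right side lies in $H$.

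Next I would handle inverses. Applying Lemma \ref{lem_M_compose} with $\Bff{M}_1 = \Bff{M}$, $\Bff{M}_2 = \Bff{M}^{-1}$ gives $\mathcal{M}(\Bff{M})\mathcal{M}(\Bff{M}^{-1}) = \sigma \mathcal{M}(\Bff{I})$ for some $\sigma \in \{\pm 1\}$. Since $\mathcal{M}(\Bff{I})$ must itself equal $\pm \mathrm{Id}_{L^2}$ (it is the metaplectic operator associated to the identity matrix, unique up to sign), we conclude that $\mathcal{M}(\Bff{M})^{-1} = \pm \mathcal{M}(\Bff{M}^{-1})$. Because $\Bff{M}^{-1}\in G$, this inverse lies in $H$, and the same is true of $(-\mathcal{M}(\Bff{M}))^{-1} = -\mathcal{M}(\Bff{M})^{-1}$. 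Finally, $\Bff{I} \in G$ gives $\pm\mathrm{Id} \in H$, so $H$ contains the identity and the corollary follows.

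There is essentially no obstacle here; the only point requiring a moment's care is the sign ambiguity in $\mathcal{M}(\Bff{I})$, but precisely because $H$ is defined to contain both $+\mathcal{M}(\Bff{M})$ and $-\mathcal{M}(\Bff{M})$ for each $\Bff{M} \in G$, this ambiguity is absorbed automatically and no further computation of $\sigma$ is needed.
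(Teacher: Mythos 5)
Your proof is correct and uses precisely the tool the paper intends: the corollary is stated immediately after Lemma \ref{lem_M_compose} with no separate proof, because it is the immediate consequence of that composition law, and your argument simply spells out the (omitted) group-theoretic bookkeeping. One minor simplification you could note: with the paper's sign conventions from Remark \ref{rem_bzero}, $\mathcal{M}(\Bff{I})$ is exactly the identity operator (take $a=1$, $c=0$, positive sign, so $\mathcal{V}_1\mathcal{W}_0 = \mathrm{Id}$), so there is no sign ambiguity to worry about there, though as you say the ambiguity is harmless in any case since $H$ contains both signs by definition.
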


\begin{proof}
In view of the density of $\{g_{\Bff{v}, \tau}\::\: \Bff{v} \in \Bbb{R}^2\}$ (see Remark \ref{rem_gaussians_suffice}), it suffices to show that $\mathcal{M}(\Bff{M}_1)\mathcal{M}(\Bff{M}_2)$ and $\sigma\mathcal{M}(\Bff{M}_3)$ agree on any Gaussian $g_{\Bff{v}, \tau}$. When
\[
	\tau_1 = \frac{c_2+d_2\tau}{a_2 + b_2\tau}, \quad \tau_2 = \frac{c_1 + d_1\tau_1}{a_1+b_1\tau_1},
\]
\[
	\mathcal{M}(\Bff{M}_1)\mathcal{M}(\Bff{M}_2)g_{\Bff{v}, \tau} = (a_2+b_2\tau)^{-1/2}(a_1+b_1\tau_1)^{-1/2}g_{\Bff{M}_1\Bff{M}_2 \Bff{v}, \tau_2},
\]
and when $\tau_3 = \frac{c_3+d_3\tau}{a_3+b_3\tau}$,
\[
	\sigma\mathcal{M}(\Bff{M}_3)g_{\Bff{v}, \tau} = \sigma(a_3 + b_3\tau)^{-1/2}g_{\Bff{M}_3\Bff{v},\tau_3}.
\]
Since $\Bff{M}_1\Bff{M}_2 = \Bff{M}_3$, it suffices to show that $\tau_2 = \tau_3$ and that we have the correct formula for $\sigma$.

Expanding
\[
	\Bff{M}_3 = \begin{pmatrix}
					a_3 & b_3 \\ c_3 & d_3
				\end{pmatrix}
			=	\begin{pmatrix}
					a_1 a_2 + b_1c_2 & a_1b_2 + b_1 d_2 \\
					c_1a_2 + d_1c_2 & c_1b_2 + d_1d_2
				\end{pmatrix}
			= \Bff{M}_1\Bff{M}_2,
\]
we compute
\[
	\tau_2 = \frac{c_1(a_2+b_2\tau) + d_1(c_2 + d_2\tau)}{a_1(a_2 + b_2\tau) + b_1(c_2 + d_2\tau)} = \frac{c_3 + d_3\tau}{a_3 + b_3\tau}
\]
and
\[
	a_1 + b_1\tau_1 = \frac{a_3 + b_3\tau}{a_2 + b_2\tau}.
\]
Therefore $\tau_2 = \tau_3$ and
\begin{align*}
	\sigma &= (a_3 + b_3\tau)^{1/2}(a_2+b_2\tau)^{-1/2}(a_1 + b_1\tau_1)^{-1/2}
	\\ &= (a_2 + b_2\tau)^{-1/2}
	\left(\frac{a_3 + b_3\tau}{a_2+b_2\tau}\right)^{-1/2}
	(a_3 + b_3\tau)^{1/2}
\end{align*}
as promised. It is clear that $\sigma^2 = 1$ and that $\sigma$ is a continuous function of $\tau$ to $\{\pm 1\}$, so $\sigma$ is independent of $\tau$ and the lemma is proved.
\end{proof}

In the case of scaling or multiplication by a Gaussian, $\sigma = 1$, which may also be easily verified using the integral kernel.

\begin{corollary}\label{cor_M_compose_VW}
If $\Bff{M}_1 = \Bff{V}_r$ for $r > 0$ or if $\Bff{M}_1 = \Bff{W}_t$ for $t \in \Bbb{R}$ as in \eqref{eq_def_canon_Vr} or \eqref{eq_def_canon_Wt}, then for any $\Bff{M}_2 \in SL(2,\Bbb{R})$
\[
	\mathcal{M}(\Bff{M}_1)\mathcal{M}(\Bff{M}_2) = \mathcal{M}(\Bff{M}_1\Bff{M}_2).
\]
\end{corollary}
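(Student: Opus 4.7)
The plan is to invoke Lemma \ref{lem_M_compose}, which already gives $\mathcal{M}(\Bff{M}_1)\mathcal{M}(\Bff{M}_2) = \sigma \mathcal{M}(\Bff{M}_1\Bff{M}_2)$ for some $\sigma \in \{\pm 1\}$, and then to show that in both cases the explicit formula
\[
	\sigma = (a_2+b_2\tau)^{-1/2}\left(\frac{a_3+b_3\tau}{a_2+b_2\tau}\right)^{-1/2}(a_3+b_3\tau)^{1/2}
\]
telescopes to $1$ once the relevant entries of $\Bff{M}_3 = \Bff{M}_1 \Bff{M}_2$ are computed. Since $\sigma$ is independent of $\tau$, it suffices to carry out the check for one convenient value, say $\tau = \ii$.

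First I would treat $\Bff{M}_1 = \Bff{V}_r$, for which a direct matrix multiplication shows that the top row of $\Bff{M}_3$ equals $r^{-1}$ times the top row of $\Bff{M}_2$, so $a_3 + b_3\tau = (a_2+b_2\tau)/r$. Thus the middle factor of $\sigma$ is $r^{1/2}$, the outer factor is $(a_2+b_2\tau)^{1/2}/r^{1/2}$ (each square root having positive real part, so the branches are forced), and the three factors multiply to $1$. The case $\Bff{M}_1 = \Bff{W}_t$ is even cleaner: the top row of $\Bff{M}_3$ agrees with that of $\Bff{M}_2$, so $a_3 + b_3\tau = a_2 + b_2\tau$, the middle factor is $1$, and the outer factors are reciprocals.

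An alternative I would keep in mind as a sanity check is to verify the identity at the level of integral kernels, since $\mathcal{V}_r$ and $\mathcal{W}_t$ act respectively by scaling and pointwise multiplication: applying them to \eqref{eq_M_kernel} for $\mathcal{M}(\Bff{M}_2)$ produces exactly the kernel \eqref{eq_M_kernel} for $\mathcal{M}(\Bff{M}_1\Bff{M}_2)$ with the prescribed square root convention $\Re\sqrt{\ii b_3} > 0$ (noting $b_3 = b_2/r$ or $b_3 = b_2$ respectively, both of which preserve the sign of $b$).

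The only genuine subtlety is branch bookkeeping for the square roots in the expression for $\sigma$; but because each of the three factors is \emph{defined} with positive real part and because the product lies in $\{\pm 1\}$, it is enough to see that the product is \emph{not} $-1$, which is immediate once one observes, e.g.\ at $\tau = \ii$ and $r = 1$ (or $t = 0$), that $\sigma$ trivially equals $1$ at that base point and depends continuously on $r$ (respectively on $t$) within a connected parameter space. I expect no obstacle beyond this careful but routine tracking of branches.
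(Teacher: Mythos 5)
Your proof is correct and takes essentially the same route as the paper: both invoke Lemma \ref{lem_M_compose}, compute that $(a_3, b_3) = (a_2/r, b_2/r)$ for $\Bff{V}_r$ and $(a_3, b_3) = (a_2, b_2)$ for $\Bff{W}_t$, and conclude $\sigma = 1$. The paper calls this last step ``obvious''; you spell out the branch bookkeeping (and offer the kernel and continuity checks as backups), but the argument is the same.
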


\begin{proof}
In the former case, $\Bff{M}_1 = \Bff{V}_r$, we have $(a_1, b_1) = (1/r, 0)$ and $(a_3, b_3) = (a_2/r, b_2/r)$, and in the latter case, $\Bff{M}_1 = \Bff{W}_t$, we have $(a_1, b_1) = (1, 0)$ and $(a_3, b_3) = (a_2, b_2)$. It is obvious, in either case, that $\sigma = 1$ in Lemma \ref{lem_M_compose}.
\end{proof}

\begin{remark}
Having established composition rules for metaplectic operators, we note that Lemma \ref{lem_M_Z} follows upon checking that $\Bff{V}_{1/s}\Bff{W}_t\Bff{F}_\alpha \Bff{V}_{1/r} = \Bff{M}$ which is an elementary matrix multiplication.
\end{remark}

\subsection{Identification of $\comb_r$ via invariance under integer shifts}

We record the classical fact that the Dirac comb $\comb_r$ from \eqref{eq_def_comb} is the only distribution (up to constant multiples) invariant under both $\mathcal{S}_{(r, 0)}$ and $\mathcal{S}_{(0, 1/r)}$. The proof is adapted from \cite[Sec.~7.2]{Hormander_ALPDO1}.

\begin{lemma}\label{lem_id_u1}
A distribution $v \in \mathscr{D}'(\Bbb{R})$ is a constant multiple of the Dirac comb $\comb_r$ in \eqref{eq_def_comb} for $r > 0$ if and only if
\begin{equation}\label{eq_lem_id_u1}
	\mathcal{S}_{(r,0)}v = \mathcal{S}_{(0, 1/r)}v = v.
\end{equation}
\end{lemma}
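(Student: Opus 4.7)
The plan is to handle the ``only if'' direction by direct verification, and then use the two invariances in turn: the modulation invariance $\mathcal{S}_{(0,1/r)}v = v$ will localize $\opnm{supp} v$ to $r\Bbb{Z}$ and force $v$ to be a sum of Dirac masses there, while the translation invariance $\mathcal{S}_{(r,0)}v = v$ will force the coefficients to be constant.

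First I would unpack the shifts using Definition \eqref{eq_def_shift}: $\mathcal{S}_{(r,0)}v(x) = v(x-r)$ and $\mathcal{S}_{(0,1/r)}v(x) = \ee^{2\pi\ii x/r}v(x)$. For the ``only if'' direction, both identities for $\comb_r$ are immediate from the definition \eqref{eq_def_comb}, since translating the lattice $r\Bbb{Z}$ by $r$ returns the same lattice, and $\ee^{2\pi\ii x/r} = 1$ on $r\Bbb{Z}$.

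For the ``if'' direction, assume \eqref{eq_lem_id_u1}. The second identity reads $(\ee^{2\pi\ii x/r}-1)v = 0$. The function $f(x) = \ee^{2\pi\ii x/r}-1$ is smooth with simple isolated zeros at $\{rk\}_{k\in\Bbb{Z}}$, so by the classical result \cite[Thm.~3.1.16]{Hormander_ALPDO1} (exactly the tool already invoked after \eqref{eq_series_gammak}), any distribution annihilated by $f$ is of the form
\[
	v = \sum_{k \in\Bbb{Z}} c_k \delta(x - rk)
\]
for some complex sequence $\{c_k\}_{k\in\Bbb{Z}}$, the series converging in $\mathscr{D}'(\Bbb{R})$ since it is locally finite.

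The main step is then to apply translation invariance. Since $\mathcal{S}_{(r,0)}v(x) = v(x-r) = \sum_{k}c_k \delta(x-r(k+1)) = \sum_k c_{k-1}\delta(x-rk)$, the identity $\mathcal{S}_{(r,0)}v = v$ forces $c_{k-1} = c_k$ for every $k \in\Bbb{Z}$. Hence $c_k \equiv c$ is constant, and
\[
	v = c \sum_{k \in\Bbb{Z}} \delta(x-rk) = \frac{c}{\sqrt{r}}\comb_r,
\]
as claimed.

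The only nontrivial ingredient is the local structure theorem of H\"ormander, which is standard; beyond that the proof is bookkeeping with the definition \eqref{eq_def_shift} of the shifts, and no obstacle should arise.
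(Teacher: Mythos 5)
Your proof is correct and follows essentially the same route as the paper: the modulation invariance $\mathcal{S}_{(0,1/r)}v=v$ gives $(\ee^{2\pi\ii x/r}-1)v=0$, whence (via H\"ormander's local structure theorem for distributions supported at a point, after dividing by the smooth nonvanishing function $(\ee^{2\pi\ii x/r}-1)/(x-rk)$ near each zero) $v=\sum_k c_k\,\delta(x-rk)$, and the translation invariance forces $c_k$ to be constant. The only cosmetic difference is that you carry the $\sqrt{r}$ normalization through explicitly at the end, which is in fact slightly more careful than the paper's final line.
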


\begin{proof}
Writing $\mathcal{S}_{(r, 0)}\comb_r = \mathcal{S}_{(0, 1/r)}\comb_r = \comb_r$ using the definition \eqref{eq_def_comb} gives the equivalent statement
\[
	\sqrt{r}\sum_{k\in\Bbb{Z}}\delta(x-rk) = \sqrt{r}\sum_{k\in\Bbb{Z}}\delta(x-r-rk) = \sqrt{r}\sum_{k\in\Bbb{Z}} \ee^{2\pi\ii x/r}\delta(x-rk).
\]
This is true by a change of variables and by the observation that $\ee^{2\pi\ii x/r} = 1$ when $x = rk$ for any $k\in\Bbb{Z}$.

Suppose, conversely, that the two equalities in \eqref{eq_lem_id_u1} hold. When $\mathcal{S}_{(0, 1/r)}v = v$, $(\ee^{2\pi\ii x/r}-1)v(x) = 0$. Therefore $\supp v \subseteq r\Bbb{Z}$ and, since the zeros of $(\ee^{2\pi\ii x/r} - 1)$ are nondegenerate, in a neighborhood of any $rk \in r\Bbb{Z}$,
\[
	(x-rk)v(x) = \frac{x-rk}{\ee^{2\pi\ii x/r} - 1}(\ee^{2\pi\ii x/r}-1)v(x)= 0.
\]
Therefore, by \cite[Thm.~3.1.16]{Hormander_ALPDO1}, $v(x)$ coincides with a multiple of a $\delta$ function in a neighborhood of any $rk \in r\Bbb{Z}$, or
\[
	v(x) = \sum_{k \in \Bbb{Z}} \gamma_k \delta(x-rk)
\]
for some sequence $\{\gamma_k\}_{k\in\Bbb{Z}}$ of complex numbers. Finally, $\mathcal{S}_{(r,0)}v = v$, so $v$ is $r$-periodic. Therefore $\{\gamma_k\}$ is a constant sequence and $v = \gamma_0 \comb_r$.
\end{proof}

\section{Evaluation of the coefficient $\mu$}\label{s_symmetries}

We follow \cite{Mumford_1983} in showing that the coefficient $\mu(\Bff{M};(q, p))$ in Theorem \ref{thm_id_metapl} can be obtained from certain symmetries. We remark that, when $\Bff{I}$ is the identity matrix and $(q, p) \equiv (0,0)~(\opnm{mod} 2)$, then $\mathcal{M}(\Bff{I})$ is the identity operator, and by \eqref{eq_shift_integer_comb},
\[
	\mu(\Bff{I}; (q, p)) = \ee^{-\frac{\pi\ii}{4}pq} \in \{\pm 1\}.
\]
We also note that changing representations of $(q, p)$ modulo 2 follows readily from \eqref{eq_shift_change_rep}: by Theorem \ref{thm_id_metapl}, when $(q, p) \equiv (q', p') \equiv (ab, cd)~(\opnm{mod} 2)$,
\[
	\mu(\Bff{M};(q,p))\mathcal{S}_{(-\frac{q}{2}, -\frac{p}{2})} \comb_1 
	= \mathcal{M}(\Bff{M})\comb_1 
	= \mu(\Bff{M};(q',p'))\mathcal{S}_{(-\frac{q'}{2}, -\frac{p'}{2})} \comb_1.
\]
By \eqref{eq_shift_change_rep},
\[
	\mathcal{S}_{(-\frac{q}{2}, -\frac{p}{2})}\comb_1 = \ee^{\frac{\pi\ii}{4}((p'-p)q' - p(q'-q))}\mathcal{S}_{(-\frac{q'}{2}, -\frac{p'}{2})}\comb_1,
\]
so, still supposing that $(q,p) \equiv (q', p') \equiv (ab, cd)~(\opnm{mod} 2)$,
\begin{equation}\label{eq_mu_change_rep}
	\mu(\Bff{M}; (q, p)) = \ee^{\frac{\pi\ii}{4}((p'-p)q' - p(q'-q))}\mu(\Bff{M};(q', p')).
\end{equation}

We therefore search for symmetries paying little regard to $(p,q)$. Following the proof of \cite[Thm.~7.1]{Mumford_1983}, it is enough to understand how $\mu$ changes under the transformations
\[
	\begin{aligned}
	\Bff{M} = \left(\begin{array}{cc} a & b \\ c & d\end{array}\right) &\mapsto \left(\begin{array}{cc} -b & a \\ -d & c\end{array}\right) = \Bff{M}\Bff{F}_1,
	\\ \Bff{M} &\mapsto \left(\begin{array}{cc} a & b \\ c+ja & d+jb\end{array}\right) = \Bff{W}_j\Bff{M},
	\\ \Bff{M} &\mapsto \left(\begin{array}{cc} a+jb & b \\ c+jd & d\end{array}\right) = \Bff{M}\Bff{W}_j.
	\end{aligned}
\]
Here, the matrices $\Bff{F}_1$ and $\Bff{W}_j$ (where $j \in \Bbb{Z}$) are defined in \eqref{eq_def_F_canon} and \eqref{eq_def_canon_Wt}. The effect of these transformations on $\mu$ follow from the Poisson summation formula and the effect of multiplication by Gaussians on $\mathcal{M}$.

\begin{proposition}\label{prop_mu_transformations}
For $\Bff{M} \in SL(2, \Bbb{Z})$ as in \eqref{eq_def_M_Bff} and for $(q,p) \equiv (ab, cd)~(\opnm{mod} 2)$ where $a,b,c,d$ are the entries of $\Bff{M}$, let $\mu(\Bff{M}; (q,p))$ be as in Theorem \ref{thm_id_metapl}. Let $\sigma = -1$ if $a < 0$ and $b \geq 0$ and $\sigma = +1$ otherwise, and let $j \in \Bbb{Z}$ be arbitrary. Then, with $\Bff{F}_1$ and $\Bff{W}_j$ from \eqref{eq_def_F_canon} and \eqref{eq_def_canon_Wt},
\[
	\begin{aligned}
	\mu(\Bff{M}; (q,p)) &= \sigma \ee^{\frac{\pi\ii}{4}}\mu(\Bff{MF}_1; (q,p)),
	\\&= \ee^{-\frac{\pi\ii}{4}jp}\mu(\Bff{W}_j\Bff{M}; (q, p+j(q-1)))
	\\&= \ee^{\frac{\pi\ii}{4}j(dq-pb)}\mu(\Bff{M}\Bff{W}_j; (q-jb, p-jd)).
	\end{aligned}
\]
\end{proposition}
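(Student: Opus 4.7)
The plan is to derive each of the three relations from the defining identity $\shift_{(q/2,p/2)}\mathcal{M}(\Bff{M})\comb_1 = \mu(\Bff{M};(q,p))\comb_1$ of Theorem \ref{thm_id_metapl}, following the same template in each case. One writes $\mathcal{M}$ of the transformed matrix as a product of $\mathcal{M}(\Bff{M})$ with either $\mathcal{F}$ or $\mathcal{W}_j$, applies that product to $\comb_1$, converts the auxiliary factor into a pure phase-space shift using the two ``distinguished'' identities
\[
	\mathcal{F}\comb_1 = \ee^{-\pi\ii/4}\comb_1 \quad \text{(Poisson summation)} \qquad \text{and} \qquad \mathcal{W}_j\comb_1 = \shift_{(0, j/2)}\comb_1 \quad (j \in \Bbb{Z}),
\]
and finally prepends the specified shift, absorbing all commutators through the Egorov relation \eqref{eq_Egorov} and the shift composition law \eqref{eq_shift_composition}. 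The identity $\mathcal{W}_j\comb_1 = \shift_{(0, j/2)}\comb_1$ for integer $j$ is precisely the content of $k^2 \equiv k \pmod 2$ flagged in the introduction, and it is the only place the arithmetic of $\Bbb{Z}$ enters.

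For the $\Bff{M}\Bff{F}_1$ case, Lemma \ref{lem_M_compose} gives $\mathcal{M}(\Bff{M}\Bff{F}_1) = \sigma'\,\mathcal{M}(\Bff{M})\mathcal{F}$ for some sign $\sigma'$. Applying this to $\comb_1$, Poisson turns the right-hand side into $\sigma'\ee^{-\pi\ii/4}\mathcal{M}(\Bff{M})\comb_1$, after which prepending $\shift_{(q/2, p/2)}$ and comparing with the defining identity for $\mu(\Bff{M}\Bff{F}_1;(q,p))$ yields $\mu(\Bff{M};(q,p)) = \sigma'\ee^{\pi\ii/4}\mu(\Bff{M}\Bff{F}_1;(q,p))$. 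Identifying $\sigma'$ with the piecewise $\sigma$ of the proposition reduces to evaluating Lemma \ref{lem_M_compose} at a convenient $\tau$, for instance $\tau = \ii$: one gets $\ii^{-1/2}(a+b\ii)^{-1/2}(\ii(a+b\ii))^{1/2}$ with each square root having positive real part, and verifies that $(\ii z)^{1/2} = -\ii^{1/2}\,z^{1/2}$ precisely when $\arg z \in (\pi/2, \pi]$, i.e.\ when $a < 0$ and $b \geq 0$.

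For the $\Bff{W}_j\Bff{M}$ and $\Bff{M}\Bff{W}_j$ cases, Corollary \ref{cor_M_compose_VW} (or Lemma \ref{lem_M_compose} specialized to $b_2 = 0$) gives the clean factorizations $\mathcal{W}_j\mathcal{M}(\Bff{M})$ and $\mathcal{M}(\Bff{M})\mathcal{W}_j$ with no extra sign. In the left case, applying $\mathcal{W}_j$ to both sides of the defining identity converts the left side to $\mu(\Bff{M};(q,p))\shift_{(0, j/2)}\comb_1$, while Egorov gives $\mathcal{W}_j\shift_{(q/2, p/2)} = \shift_{(q/2, (p+jq)/2)}\mathcal{W}_j$ on the right; pre-multiplying both sides by $\shift_{(0, -j/2)}$ kills the left-hand half-integer momentum shift and, via \eqref{eq_shift_composition}, combines $\shift_{(0,-j/2)}\shift_{(q/2,(p+jq)/2)}$ into an explicit $8$th-root-of-unity phase times $\shift_{(q/2, (p+j(q-1))/2)}\mathcal{M}(\Bff{W}_j\Bff{M})\comb_1$, yielding the stated formula. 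The right-multiplication case is parallel: one applies $\mathcal{W}_j$ to $\comb_1$ directly, rewrites the result as $\shift_{(0, j/2)}\comb_1$, and commutes this shift past $\mathcal{M}(\Bff{M})$ via Egorov to obtain $\shift_{(bj/2, dj/2)}\mathcal{M}(\Bff{M})\comb_1$. Composing with the prepended $\shift_{((q-jb)/2,(p-jd)/2)}$ through \eqref{eq_shift_composition} reconstructs $\shift_{(q/2, p/2)}$ multiplied by the phase $\ee^{\pi\ii j(dq - pb)/4}$, with the arithmetic $(p-jd)b - (q-jb)d = pb - qd$ doing all the work.

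The principal obstacle is the sign $\sigma$ in the first transformation: unlike the $\Bff{W}_j$ compositions, where Lemma \ref{lem_M_compose} automatically produces $+1$ because $b_2 = 0$, the Fourier matrix $\Bff{F}_1$ has $b_2 = 1$, so $\sigma'$ is a genuine product of three positive-real-part square roots whose combined sign flips depending on which quadrant $a + b\ii$ lies in. The case analysis at $\tau = \ii$ gives the claimed dichotomy, but the boundary cases $a = 0$ or $b = 0$ (where one of the roots hits the imaginary axis) need to be pinned down separately, either by continuity in $\tau$ from a base case like $\Bff{M} = \Bff{I}$ or by a direct check, to confirm that the convention ``$\sigma = -1$ iff $a<0$ and $b \geq 0$, else $+1$'' is placed correctly on the boundaries. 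Everything else is elementary phase arithmetic driven by the symplectic form $\xi_1 x_2 - x_1\xi_2$ in \eqref{eq_shift_composition}.
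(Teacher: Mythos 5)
Your plan is correct in outline and, for the second and third equalities, follows essentially the same route as the paper: commute $\mathcal{W}_j$ through the defining identity via Egorov, use the sign-free factorizations $\mathcal{W}_j\mathcal{M}(\Bff{M}) = \mathcal{M}(\Bff{W}_j\Bff{M})$ and $\mathcal{M}(\Bff{M})\mathcal{W}_j = \mathcal{M}(\Bff{M}\Bff{W}_j)$ (the latter is not in Corollary \ref{cor_M_compose_VW} as printed, but does follow from Lemma \ref{lem_M_compose} with $b_2 = 0$, as you note), convert $\mathcal{W}_j\comb_1$ to $\shift_{(0,j/2)}\comb_1$, and combine shifts by \eqref{eq_shift_composition}. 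One bookkeeping warning: the phase in the second identity comes out as $\ee^{-\frac{\pi\ii}{4}jq}$, not $\ee^{-\frac{\pi\ii}{4}jp}$ as printed in the Proposition: the composition $\shift_{(0,-\frac{j}{2})}\shift_{(\frac{q}{2},\frac{p+jq}{2})}$ contributes $\ee^{\pi\ii(-\frac{j}{2})\cdot\frac{q}{2}} = \ee^{-\frac{\pi\ii}{4}jq}$, which is also what the paper's own proof produces before its final display. You wrote ``yielding the stated formula'' without computing the phase; had you computed it you would have caught the misprint.

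Where you genuinely diverge from the paper is the first equality, and there your proposed fix for the boundary cases needs repair. The paper parametrizes by $\alpha \in (-2,2]$ through Lemma \ref{lem_M_Z}, inserts $\mathcal{F}^{\alpha+1}\mathcal{F}^{-1}$, applies Poisson, and reads $\sigma$ off from whether $\alpha+1$ overshoots $(-2,2]$, which happens exactly for $\alpha \in (1,2]$, i.e.\ $a<0$ and $b\geq 0$; the boundary is unambiguous. Your alternative, evaluating Lemma \ref{lem_M_compose} at $\tau = \ii$, runs into branch-cut degeneracies that \emph{cannot} be cured by moving $\tau$: when $a = 0$, $b > 0$, the factor $(a_3 + b_3\tau)^{1/2} = (-b + a\tau)^{1/2} = (-b)^{1/2}$ lies on the negative real axis for \emph{every} $\tau$ in the upper half-plane (because $b_3 = a = 0$), and when $b = 0$, $a < 0$, the middle factor $\bigl(\tfrac{a_3+b_3\tau}{a_2+b_2\tau}\bigr)^{-1/2} = a^{-1/2}$ is similarly degenerate for every $\tau$. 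In these cases the convention ``positive real part'' gives zero real part, so one must fall back on the explicit sign convention of Remark \ref{rem_bzero} for $\mathcal{M}$ with vanishing upper-right entry, a caveat the paper itself raises in Remark \ref{rem_gaussians_suffice}. Your second suggested remedy (a direct check on those particular matrices) would work, but the ``continuity in $\tau$'' route is closed. This is the concrete reason the paper's $\alpha$-parametrization is the cleaner proof of the first equality: it avoids the two-fold cover's branch ambiguity altogether.
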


\begin{proof}
For the first, we revert to the expression corresponding to Theorem \ref{thm_identify} via Lemma \ref{lem_M_Z}. Specifically, with $r = 1$ and when $\ee^{\frac{\pi\ii}{2}\alpha} = \frac{1}{s}(a+\ii b)$ for $\alpha \in (-2, 2]$,
\begin{equation}\label{eq_prop_symmetries_1}
	\mu(\Bff{M};(q,p))\comb_1 
	= \mathcal{S}_{(-\frac{q}{2}, -\frac{p}{2})}\mathcal{V}_{1/s}\mathcal{W}_t \mathcal{F}^\alpha \comb_1 
	= \mathcal{S}_{(-\frac{q}{2}, -\frac{p}{2})}\mathcal{V}_{1/s}\mathcal{W}_t \mathcal{F}^{\alpha+1}\mathcal{F}^{-1} \comb_1.
\end{equation}
By the Poisson summation formula \eqref{eq_Poisson_original}, $\mathcal{F}^{-1} \comb_1 = \ee^{\frac{\pi\ii}{4}}\comb_1$. Note that $\ee^{\frac{\pi\ii}{2}(\alpha + 1)} = \frac{1}{s}(-b + \ii a)$ and $t = ac + bd = (-b)(-d) + ac$ is unchanged under replacing $\Bff{M}$ with $\Bff{M}\Bff{F}_1$. However, if $b \geq 0$ and $a < 0$, then $\alpha + 1$ is no longer in $(-2, 2]$. To correct for this, we replace $\mathcal{F}^{\alpha + 1}$ by $\mathcal{F}^{\alpha - 3} = -\mathcal{F}^{\alpha+1}$. Therefore
\[
	\mathcal{V}_{1/s}\mathcal{W}_t \mathcal{F}^{\alpha+1} = \sigma\mathcal{M}(\Bff{M}\Bff{F}_1)
\]
and \eqref{eq_prop_symmetries_1} becomes
\[
	\mu(\Bff{M};(q,p))\comb_1 = \sigma\ee^{\frac{\pi\ii}{4}}\mathcal{S}_{(-\frac{q}{2}, -\frac{p}{2})}\mathcal{M}(\Bff{M}\Bff{F}_1) \comb_1 = \sigma\ee^{\frac{\pi\ii}{4}}\mu(\Bff{M}\Bff{F}_1; (q, p))\comb_1,
\]
proving the first equality in the proposition.

For the second and third equalities, we use the observation that
\begin{equation}\label{eq_M_Wj}
	\mathcal{M}(\Bff{M}) = \mathcal{W}_{-j}\mathcal{M}(\Bff{W}_j \Bff{M}) = \mathcal{M}(\Bff{M}\Bff{W}_j)\mathcal{W}_{-j}
\end{equation}
with the multiplication operator $\mathcal{W}_{-j} f(x) = \ee^{-\pi\ii j x^2}f(x)$ defined in \eqref{eq_def_op_Wt} and $\Bff{W}_j$ from \eqref{eq_def_canon_Wt}. Notice that, unlike composition with the Fourier transform, there is no change of sign; see Corollary \ref{cor_M_compose_VW}.

By the definition of $\mu(\Bff{M};(q,p))$ in Theorem \ref{thm_id_metapl}, the first equality in \eqref{eq_M_Wj}, and the Egorov relation \eqref{eq_Egorov} for $\mathcal{W}_j$ and $\Bff{W}_j$,
\begin{align*}
	\mu(\Bff{M}; (q,p))\comb_1 &= \mathcal{S}_{(\frac{q}{2}, \frac{p}{2})}\mathcal{M}(\Bff{M})\comb_1 
	\\ & = \mathcal{S}_{(\frac{q}{2}, \frac{p}{2})}\mathcal{W}_{-j}\mathcal{M}(\Bff{W}_j\Bff{M})\comb_1 
	\\ & = \mathcal{W}_{-j}\mathcal{S}_{(\frac{q}{2}, \frac{p+jq}{2})}\mathcal{M}(\Bff{W}_j\Bff{M})\comb_1
\end{align*}
In order to cancel $\mathcal{W}_{-j}$, we observe that
\begin{equation}\label{eq_Wj_shift_comb}
	\mathcal{W}_j \comb_1 = \mathcal{S}_{(0, \frac{j}{2})}\comb_1, \quad j \in \Bbb{Z},
\end{equation}
because $\ee^{\pi\ii j x^2} = \ee^{\pi\ii j x}$ for $x \in \opnm{supp} \comb_1 = \Bbb{Z}$. Therefore $\comb_1 = \mathcal{S}_{(0, -\frac{j}{2})}\mathcal{W}_j \comb_1$, and our computation becomes
\begin{align*}
	\mu(\Bff{M}; (q,p))\comb_1 
	& = \mathcal{S}_{(0, -\frac{j}{2})}\mathcal{W}_j \mu(\Bff{M}; (q,p)) \comb_1 
	\\ & = \mathcal{S}_{(0, -\frac{j}{2})}\mathcal{W}_j \mathcal{W}_{-j}\mathcal{S}_{(\frac{q}{2}, \frac{p+jq}{2})}\mathcal{M}(\Bff{W}_j\Bff{M})\comb_1 
	\\ & = \mathcal{S}_{(0, -\frac{j}{2})}\mathcal{S}_{(\frac{q}{2}, \frac{p+jq}{2})}\mathcal{M}(\Bff{W}_j\Bff{M})\comb_1 
	\\ & = \ee^{-\frac{\pi\ii}{4} jq}\mathcal{S}_{(\frac{q}{2}, \frac{p+jq}{2})}\mathcal{M}(\Bff{W}_j\Bff{M})\comb_1
\end{align*}
by the composition law \eqref{eq_shift_composition}. By the definition of $\mu$ in Theorem \ref{thm_id_metapl}, this proves
\[
	 \mu(\Bff{M}; (q,p)) = \ee^{-\frac{\pi\ii}{4} jq} \mu(\Bff{M}; (q, p+j(q-1)))
\]
and therefore the second equality in the proposition.

The final equality in the proposition follows directly from the definition of $\mu$ in Theorem \ref{thm_id_metapl}, the second equality in \eqref{eq_M_Wj}, the observation \eqref{eq_Wj_shift_comb}, the Egorov theorem for $\mathcal{M}(\Bff{M}\Bff{W}_j)$, and the composition law \eqref{eq_shift_composition}:
\begin{align*}
	\mu(\Bff{M}; (q,p))\comb_1 
	& = \mathcal{S}_{(\frac{q}{2}, \frac{p}{2})}\mathcal{M}(\Bff{M})\comb_1 
	\\ &= \mathcal{S}_{(\frac{q}{2}, \frac{p}{2})}\mathcal{M}(\Bff{M}\Bff{W}_j)\mathcal{W}_{-j}\comb_1 
	\\ & = \mathcal{S}_{(\frac{q}{2}, \frac{p}{2})}\mathcal{M}(\Bff{M}\Bff{W}_j)\mathcal{S}_{(0, -\frac{j}{2})}\comb_1 
	\\ & = \mathcal{S}_{(\frac{q}{2}, \frac{p}{2})}\mathcal{S}_{-\frac{j}{2}(b, d)}\mathcal{M}(\Bff{M}\Bff{W}_j)\comb_1 
	\\ & = \ee^{-\frac{\pi\ii}{4}j(pb-dq)}\mathcal{S}_{\frac{1}{2}((q,p) - (jb, jd))}\mathcal{M}(\Bff{M}\Bff{W}_j)\comb_1 
	\\ & = \ee^{\frac{\pi\ii}{4}j(dq-pb)}\mu(\Bff{M}\Bff{W}_j; (q,p) - (jb,jd))\comb_1.
\end{align*}
This completes the proof of the proposition.
\end{proof}

\begin{remark}\label{rem_t_rep}
Another view of the second equality in Proposition \ref{prop_mu_transformations} is that multiplying $\mathcal{M}(\Bff{M})$ by $\mathcal{W}_j$ is equivalent to changing the choice of $t$ in Theorem \ref{thm_identify}. Indeed, 
\[
	\mathcal{W}_j \mathcal{V}_{1/s}\mathcal{W}_t = \mathcal{V}_{1/s}\mathcal{W}_{t+js^2},
\]
and replacing $t$ by $t' = ac' + bd'$ in Theorem \ref{thm_identify} (taking $r = 1$ for simplicity) is possible if and only if $ad' - bc' = ad - bc$, meaning that $(c' - c, d' - d) = j(b, a)$ for some $j \in \Bbb{Z}$ because $a$ and $b$ are relatively prime. In this case, $t' = t + js^2$, in correspondance with the second equality in Proposition \ref{prop_mu_transformations}.
\end{remark}

\begin{remark}\label{rem_conj}
There are many symmetries which we do not use in proving the functional equation. One worth remarking on is
\[
	\mathcal{F}^{-\alpha}\comb_1 = \overline{\mathcal{F}^{\alpha}\comb_1},
\]
which follows immediately from the fact that whenever $f \in \mathscr{S}(\Bbb{R})$ is real-valued, $\overline{\mathcal{F}^\alpha f} = \mathcal{F}^{-\alpha}f$ and $\comb_1 f \in \Bbb{R}$ as well. This allows us to compute for any $f \in \mathscr{S}(\Bbb{R})$ real-valued (see Remark \ref{rem_unitary})
\[
	\overline{\langle f, \mathcal{F}^\alpha\rangle} = \langle \overline{\mathcal{F}^{-\alpha} f}, \comb_1\rangle = \langle \mathcal{F}^{\alpha} f, \comb_1\rangle = \langle f, \mathcal{F}^{-\alpha} \comb_1\rangle.
\]
This corresponds to the symmetry
\[
	\mu\left(\begin{pmatrix} a & b \\ c & d \end{pmatrix}; (q, p)\right) = \mu\left(\begin{pmatrix} a & -b \\ -c & d \end{pmatrix}; (q, -p)\right)^{-1}.
\]
\end{remark}

\subsection{Algorithm to identify $\mu$}

We recall the method in the proof of \cite[Theorem~7.1]{Mumford_1983} to identify the coefficient in the functional equation for the Jacobi theta function, adapted for our setting.

Let $a,b,c,d,q,p \in \Bbb{Z}$ be such that $ad-bc = 1$ and $(ab, cd) = (q,p)~(\opnm{mod} 2)$. We can choose $j$ such that $|a+jb| \leq |b|/2$; we apply the third equality in Proposition \ref{prop_mu_transformations} to replace $(a,b,c,d)$ with $(a+jb, b, c+jd, d)$. The first equality (the Poisson summation formula) allows us to replace $(a,b)$ with $(-b, a)$, and we continue until $(a,b) = (\pm 1, 0)$. In this case $ab = 0$, so we can replace $(p,q)$ with $(0, q)$ by multiplying by $\ee^{-\frac{\pi\ii}{4}pq}$ in view of \eqref{eq_mu_change_rep}. Finally, the observation \eqref{eq_Wj_shift_comb} and $\shift_{(0,j)}\comb_1 = \comb_1$ for all $j \in \Bbb{Z}$ gives $\mu(1, 0, c, 1; 0, q) = 1$ for all $c \in \Bbb{Z}$ and $q \equiv c (\opnm{mod} 2)$. Applying the first equality of Proposition \ref{prop_mu_transformations} twice gives that $\mu(-1, 0, c, -1; 0, q) = -\ii$. This gives us the value of $\mu$.

To make this procedure concrete, we present it in a few lines of code in Python, which returns $K \in \Bbb{Z}$ such that $\mu = \ee^{\frac{\pi\ii}{4}K}$.

\begin{verbatim}
def logmu(a,b,c,d,q,p):
    K = 0
    while b != 0:
        j = -a//b
        K = K + j*(d*q - p*b) 	# Third equality
        a, c, p, q = a+j*b, c+j*d, q-j*b, p-j*d 	# Third equality
        if a < 0 and b >= 0:
            K = K - 3 	# First equality, sigma = -1
        else:
            K = K + 1 	# First equality, sigma = +1
        a,b,c,d = -b,a,-d,c 	# First equality
    K = K+p*q 	# Reduction from (q, p) to to (0, q)
    if a == -1:
        K = K - 2	# If a = d = -1
    return(K%8)
\end{verbatim}

\subsection{A remark on higher dimensions}\label{ss_dim_high}

We have focused here on dimension one because there is a natural continuous one-parameter group of transformations that the author wishes to study. When rephrased in terms of the metaplectic representation of $Sp(2n, \Bbb{Z})$, very little changes in higher dimension. We describe the analogous result rapidly without attempting any deeper study. 

In this section, $n \geq 1$ is the dimension and
\[
	\comb_1(x) = \sum_{k \in \Bbb{Z}^n}\delta(x-k).
\]

We recall that $Sp(2n, \Bbb{R})$ is the set of linear transformations of $\Bbb{R}^{2n}$ leaving invariant the symplectic form
\[
	\sigma((x,\xi), (y, \eta)) = \xi\cdot y - \eta \cdot x, \quad (x,\xi), (y,\eta) \in \Bbb{C}^{2n}
\]
(so $\Bff{M} \in Sp(2n, \Bbb{R})$ if $\sigma(\Bff{M}\Bff{v}, \Bff{M}\Bff{w}) = \sigma(\Bff{v}, \Bff{w})$ for all $\Bff{v}, \Bff{w} \in \Bbb{C}^{2n}$). Shifts are defined identically,
\[
	\shift_{(x_0, \xi_0)}f(x) = \ee^{-\pi\ii x_0\cdot \xi_0 + 2\pi\ii \xi_0 x}f(x-x_0), \quad (x_0, \xi_0) \in\Bbb{R}^{2n}.
\]
One has the composition law $\shift_{\Bff{v}}\shift_{\Bff{w}} = \ee^{\pi\ii\sigma(\Bff{v}, \Bff{w})}\shift_{\Bff{v}+\Bff{w}} = \ee^{2\pi\ii\sigma(\Bff{v}, \Bff{w})}\shift_{\Bff{w}}\shift_{\Bff{v}}$ and that $\shift_{(j, k)}\comb_1 = \ee^{-\pi\ii j\cdot k}\comb_1$ whenever $j, k \in \Bbb{Z}^n$.

When
\[
	\Bff{M} = \begin{pmatrix} A & B \\ C & D \end{pmatrix} \in Sp(2n, \Bbb{R})
\]
for $A, B, C, D \in \Bff{M}_{n\times n}(\Bbb{R})$, and when $\det B \neq 0$, we may similarly define the metaplectic operator
\[
	\mathcal{M}(\Bff{M})f(x) = (\det (-\ii B))^{-1/2}\int \ee^{\pi\ii(DB^{-1}x\cdot x -2B^{-1}x\cdot y + B^{-1}Ay\cdot y)}f(y)\,\dd y
\]
(where the author has no preferred choice of sign for the square root). When $\det B = 0$ one may concoct a composition of multiplication by Gaussians with imaginary phase and changes of variables. One may alternatively view the metaplectic representation through its generators: the Fourier transform in the first variable, changes of variables, and multiplication by Gaussians with imaginary phase (see for instance \cite[Lem.~18.5.9]{Hormander_ALPDO3}). As a final approach, one may define a metaplectic operator through its effect on Gaussians as in Remark \ref{rem_gaussians_suffice}.

With any approach, one has a family of operators $\mathcal{M}(\Bff{M})$ corresponding two-to-one with $Sp(2n, \Bbb{R})$; these operators are unitary on $L^2(\Bbb{R}^n)$, isomorphisms of $\mathscr{S}(\Bbb{R}^n)$ and $\mathscr{S}'(\Bbb{R}^n)$, and the Egorov relation
\[
	\mathcal{M}(\Bff{M})\shift_{\Bff{v}} = \shift_{\Bff{Mv}}\mathcal{M}(\Bff{M}), \quad \forall \Bff{v}\in \Bbb{R}^{2n}.
\]

We simply record that an obvious analogue of Theorem \ref{thm_id_metapl} holds in any dimension.

\begin{theorem}
For $n \geq 1$, let $\Bff{M} = \begin{pmatrix} A & B \\ C & D\end{pmatrix} \in Sp(2n, \Bbb{Z})$. Define $q_j^0$ as the scalar product of the $j$-th rows of $A$ and of $B$ (equivalently, the $j$-th diagonal entry of $AB^\top$) and let $p_j^0$ be the scalar product of the $j$-th rows of $C$ and of $D$. Let $q^0 = (q_j^0)_{j=1}^n$ and let $p^0 = (p_j^0)_{j=1}^n$. Then, when $(q, p) \in \Bbb{R}^{2n}$, when $\shift_{(\frac{q}{2}, \frac{p}{2})}$ is the corresponding shift operator, when $\mathcal{M}(\Bff{M})$ is one of the two metaplectic operators corresponding to $\Bff{M}$, and when $\comb_1$ is the Dirac comb, there exists $\mu$ such that
\begin{equation}\label{eq_multid}
	\shift_{(\frac{q}{2}, \frac{p}{2})}\mathcal{M}(\Bff{M}) \comb_1 = \mu \comb_1
\end{equation}
if and only if $(q, p) \equiv (q^0, p^0)~(\opnm{mod} 2)$, and in this case $\mu^8 = 1$.
\end{theorem}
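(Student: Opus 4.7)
The plan is to mirror the proof of Theorem~\ref{thm_id_metapl} and the algorithmic derivation of Theorem~\ref{thm_mu_8}, adapted to $Sp(2n,\Bbb{Z})$. The starting point is the $n$-dimensional analogue of Lemma~\ref{lem_id_u1}: a distribution $v\in\mathscr{D}'(\Bbb{R}^n)$ is a constant multiple of $\comb_1$ if and only if $\shift_{(e_\ell,0)}v = \shift_{(0,e_\ell)}v = v$ for every $\ell = 1, \ldots, n$. This follows by iterating the one-dimensional argument coordinate by coordinate: invariance under the momentum shifts localizes $\opnm{supp} v \subseteq \Bbb{Z}^n$, after which invariance under the position shifts forces the coefficients to be equal.

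For the characterization of when \eqref{eq_multid} holds, I would apply each integer shift $\shift_\Bff{e}$ (for $\Bff{e} \in \{(e_\ell, 0), (0, e_\ell)\}_{\ell = 1}^n$) to $\shift_{(q/2,p/2)}\mathcal{M}(\Bff{M})\comb_1$, commuting with $\shift_{(q/2,p/2)}$ using the composition law and then through $\mathcal{M}(\Bff{M})$ using the Egorov relation. This produces a shift by $\Bff{M}^{-1}\Bff{e}$ applied to $\comb_1$, which gives an eigenvalue $\ee^{-\pi\ii j\cdot k}$ where $(j,k) = \Bff{M}^{-1}\Bff{e} \in \Bbb{Z}^{2n}$ (since $\Bff{M}^{-1} \in Sp(2n,\Bbb{Z})$). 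Using
\[
	\Bff{M}^{-1} = \begin{pmatrix} D^\top & -B^\top \\ -C^\top & A^\top\end{pmatrix},
\]
a short calculation identifies the total eigenvalue as $\ee^{\pi\ii(p_\ell^0 - p_\ell)}$ for $\Bff{e} = (e_\ell, 0)$ and as $\ee^{\pi\ii(q_\ell + q_\ell^0)}$ for $\Bff{e} = (0, e_\ell)$, using that $(DC^\top)_{\ell\ell} = (CD^\top)_{\ell\ell} = p_\ell^0$ and analogously for $q_\ell^0$. All eigenvalues equal one exactly when $(q,p)\equiv(q^0, p^0)~(\opnm{mod} 2)$, in which case the identification lemma yields \eqref{eq_multid} with $\mu \ne 0$; conversely, a mismatch in any eigenvalue prevents $\shift_{(q/2,p/2)}\mathcal{M}(\Bff{M})\comb_1$ from being a multiple of the shift-invariant $\comb_1$.

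The genuinely subtle step is $\mu^8 = 1$, which I expect to be the main obstacle. My plan is to generalize the reduction of Section~\ref{s_symmetries} using the fact that $Sp(2n,\Bbb{Z})$ is generated by the Fourier involution $J = \begin{pmatrix} 0 & I \\ -I & 0\end{pmatrix}$, the symmetric-integer shears $\Bff{T}_S = \begin{pmatrix} I & 0 \\ S & I\end{pmatrix}$ with $S = S^\top \in M_n(\Bbb{Z})$, and the scalings $\Bff{D}_A = \begin{pmatrix} A & 0 \\ 0 & (A^\top)^{-1}\end{pmatrix}$ with $A \in GL_n(\Bbb{Z})$. For each generator, $\mu$ is directly an eighth root of unity: iterated Poisson summation gives $\mathcal{M}(J)\comb_1 = \ee^{-\pi\ii n/4}\comb_1$; the key identity $\langle Sk, k\rangle \equiv \opnm{diag}(S)\cdot k~(\opnm{mod} 2)$ for $k\in\Bbb{Z}^n$ (the $n$-dimensional analogue of $k^2\equiv k~(\opnm{mod} 2)$) turns $\mathcal{M}(\Bff{T}_S)\comb_1$ into a half-integer momentum shift of $\comb_1$; and $\mathcal{M}(\Bff{D}_A)\comb_1 = \pm\comb_1$ since $A$ permutes $\Bbb{Z}^n$. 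Analogues of Proposition~\ref{prop_mu_transformations} then show that multiplying $\Bff{M}$ on either side by a generator modifies $\mu$ only by a further eighth root of unity, the extra factor arising from \eqref{eq_shift_composition} and \eqref{eq_shift_integer_comb}. Iterating the reduction reaches $\Bff{M} = \Bff{D}_{\pm I}$, and tracking the accumulated phase gives $\mu^8 = 1$.

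As an alternative that I would present as a remark, one can pair \eqref{eq_multid} with a centered Gaussian $g_\tau(x) = \ee^{\pi\ii\langle \tau x, x\rangle}$ for $\tau$ symmetric with positive-definite imaginary part, exactly as in Theorem~\ref{thm_theta}. This identifies $\bar\mu$ with the coefficient in the classical Siegel functional equation for higher-genus theta functions with half-integer characteristics, known to be an eighth root of unity. The algorithmic route above, however, has the advantage of remaining entirely within the elementary framework of Sections~\ref{s_direct} and~\ref{s_symmetries}.
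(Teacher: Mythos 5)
Your proposal matches the paper's proof: the same reduction to invariance under the integer shifts $\shift_{(e_\ell,0)},\shift_{(0,e_\ell)}$ via the Egorov relation and $\Bff{M}^{-1}$, the same generating set $J$, $\Bff{T}_S$, $\Bff{D}_A$ for $Sp(2n,\Bbb{Z})$, the same identity $\langle Sk,k\rangle \equiv \operatorname{diag}(S)\cdot k~(\operatorname{mod}\,2)$ turning the shear into a half-integer momentum shift, and the same conclusion by tracking that each generator and each shift-composition contributes only an eighth root of unity. One small slip: $\mathcal{M}(\Bff{D}_A)\comb_1 = (\det A)^{1/2}\comb_1$, which is $\pm\ii\,\comb_1$ when $\det A = -1$ (a fourth root of unity, not just a sign), though this does not affect the conclusion since fourth roots are eighth roots.
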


\begin{proof}
Let $e_j \in \Bbb{R}^n$ be the basis vector which has zeros for entries except for a one in the $j$-th position. Just as in the proof of Theorem \ref{thm_id_metapl}, following \cite[Sec.~7.2]{Hormander_ALPDO1}, existence of $\mu \in \Bbb{C}$ follows from showing that
\[
	\shift_{(e_j, 0)}\shift_{(\frac{q}{2}, \frac{p}{2})}\mathcal{M}(\Bff{M}) \comb_1 = \shift_{(0, e_j)}\shift_{(\frac{q}{2}, \frac{p}{2})}\mathcal{M}(\Bff{M}) \comb_1 = \shift_{(\frac{q}{2}, \frac{p}{2})}\mathcal{M}(\Bff{M}) \comb_1
\]
for $j = 1, \dots, g$.

We compute using the Egorov relation that
\[
	\shift_{(e_j, 0)}\shift_{(\frac{q}{2}, \frac{p}{2})}\mathcal{M}(\Bff{M}) \comb_1 = \ee^{-\pi\ii p_j}\shift_{(\frac{q}{2}, \frac{p}{2})}\mathcal{M}(\Bff{M})\shift_{\Bff{v}_j} \comb_1
\]
where $p = (p_j)_{j=1}^n$ and $\Bff{v}_j$ is the $j$-th column of $\Bff{M}^{-1}$. Since $\Bff{M} \in Sp(2n, \Bbb{Z})$, we can write \cite[Sect.~1.1.9]{Lion_Vergne_1980}
\[
	\Bff{M}^{-1} = \begin{pmatrix} D^\top & -B^\top \\ -C^\top & A^\top\end{pmatrix}.
\]
When $c_{k\ell}$ and $d_{k\ell}$ are the entries of $C$ and $D$, 
\[
	\Bff{v}_j = (d_{j1}, \dots, d_{jn}, -c_{j1}, \dots, -c_{jn}, )
\]
and, since $p^0_j$ was defined to be $\sum_{j=1}^n c_{j1}d_{j1}$,
\[
	\shift_{(e_j, 0)}\shift_{(\frac{q}{2}, \frac{p}{2})}\mathcal{M}(\Bff{M}) \comb_1 = \ee^{\pi\ii (p^0_j - p_j)}\shift_{(\frac{q}{2}, \frac{p}{2})}\mathcal{M}(\Bff{M})\shift_{\Bff{v}_j} \comb_1.
\]

Similarly,
\[
	\shift_{(0, e_j)}\shift_{(\frac{q}{2}, \frac{p}{2})}\mathcal{M}(\Bff{M}) \comb_1 = \ee^{-\pi\ii (q^0_j + q_j)}\shift_{(\frac{q}{2}, \frac{p}{2})}\mathcal{M}(\Bff{M})\shift_{\Bff{v}_j} \comb_1.
\]
Therefore there exists $\mu \in \Bbb{C}$ satisfying \ref{eq_multid}.

The fact that $\mu^8 = 1$ follows from the fact \cite[Prop.\ A 5]{Mumford_1983} that $Sp(2n, \Bbb{Z})$ is generated by
\[
	\begin{pmatrix} 0 & I \\ -I & 0\end{pmatrix}, \quad \begin{pmatrix} A^{-1} & 0 \\ 0 & A^\top\end{pmatrix}, \quad \begin{pmatrix} I & 0 \\ B & I\end{pmatrix},
\]
where $A \in GL(n, \Bbb{Z})$ and $B \in \Bbb{M}_{n\times n}(\Bbb{Z})$ is symmetric. These correspond to the Fourier transform, changes of variables $\mathcal{V}_{A}f(x) = (\det A)^{1/2}f(Ax)$, and multiplication by Gaussians with imaginary phase $\mathcal{W}_{B}f(x) = \ee^{\pi\ii x\cdot Bx}f(x)$. The Fourier transform gives an eighth root of unity $\ee^{-\frac{\pi\ii}{4}n}$ by the Poisson summation formula, and the changes of variables give fourth roots of unity (depending on the choice of signs for square roots) because $\det A = \pm 1$. 

Multiplication by Gaussians corresponds to a half-integer shift: when $b_\Delta = (b_{jj})_{j=1}^n$ is the diagonal of $B = (b_{jk})_{j,k=1}^n$,
\[
	\mathcal{W}_B \comb_1 = \shift_{(0, \frac{b_\Delta}{2})}\comb_1.
\]
This is because $B$ is symmetric and, for $x \in \Bbb{Z}^n$,
\[
	x\cdot Bx = \sum_{j=1}^n b_{jj}x_j^2 + 2\sum_{1 \leq j < k \leq n} b_{jk}x_jx_k \equiv \sum_{j=1}^g b_{jj}x_j^2 \quad (\opnm{mod} 2),
\]
so
\[
	\ee^{\pi\ii (Bx \cdot x - b_\Delta \cdot x)}\comb_1 = \comb_1.
\]
As in dimension one, commutators and compositions of half-integer shifts produce eighth roots of unity (and only eighth roots of unity).

Having shown that generators of the metaplectic representation, when composed with appropriate half-integer shifts, act on the Dirac comb by multiplication by eighth roots of unity, we have proved the theorem.
\end{proof}

\section{The Bargmann transform}\label{s_Bargmann}

The Bargmann transform \cite{Bargmann_1961}
\begin{equation}\label{eq_def_Bargmann}
	\mathfrak{B}f(z) = \int \ee^{-\pi x^2 + 2\pi z x - \frac{\pi}{2}z^2}f(x)\,\dd x
\end{equation}
plays a central role in the analysis of the quantum harmonic oscillator $Q_0$ defined in \eqref{eq_def_Schro_FrFT}. (A good reference is \cite[Chap.\ 1.6]{Folland_1989}.) The Bargmann transform, which can be formally viewed as the metaplectic operator associated with
\begin{equation}\label{eq_canon_Bargmann}
	\Bff{B} = \begin{pmatrix} 1 & -\ii \\ -\frac{\ii}{2} & \frac{1}{2} \end{pmatrix},
\end{equation}
is a unitary map from $L^2(\Bbb{R})$ to the space $\mathfrak{F}$ of holomorphic functions $u:\Bbb{C} \to \Bbb{C}$ for which
\[
	\|u\|_{\mathfrak{F}}^2 = \int_{\Bbb{C}} |u(z)|^2\,2^{1/2}\ee^{-\pi|z|^2}\,\dd \Re z \,\dd \Im z
\]
is finite. In this section, we show that the Bargmann transform allows us to prove Theorems \ref{thm_identify} and \ref{thm_mu_8} as a consequence of the classical functional equation in Theorem \ref{thm_theta}. As in Section \ref{ss_proofs_elementary}, we write
\begin{equation}\label{eq_Bargmann_rho}
	\rho := \ee^{\frac{\pi\ii}{2}\alpha} = \rho_1 + \ii \rho_2, \quad \rho_1, \rho_2\in \Bbb{R}.
\end{equation}
Supposing that $\cos\frac{\pi\alpha}{2}$ and $\sin \frac{\pi\alpha}{2}$ are linearly independent over $\Bbb{Z}$, we write $\rho = \frac{1}{s}(a+\ii b)$ for $a,b\in\Bbb{Z}$ linearly independent and $s = \sqrt{a^2 + b^2} > 0$ (Remark \ref{rem_rho_s}).

Our goal being to present the proof as rapidly as possible, we restrict ourselves to the case where $r = 1$ and where $ab$ and $cd$ are both even. In this case, Theorems \ref{thm_identify} and \ref{thm_mu_8} reduce to the existence of some $\mu_0\in\Bbb{C}$ for which $\mu_0^8 = 1$ and
\begin{equation}\label{eq_Bargmann_identify}
	\mathcal{F}^\alpha \comb_1 = \mu_0\ee^{-\pi \ii t x^2}\comb_{1/s}(x).
\end{equation}
We leave it to the interested reader to extend the computation to general $r > 0$ (which is quite straightforward) and general $a, b, c, d$ such that $ad-bc = 1$, which can be done using symmetries of theta-functions as in \cite[Table 0, p.~19]{Mumford_1983}. We also rely on direct computation; applications of the metaplectic theory, while clearly interesting to the author, are presented elsewhere in this work.

The Bargmann transform of the Dirac comb $\comb_1$ is
\begin{equation}\label{eq_Bargmann_comb}
	\mathfrak{B}\comb_1(z) = \sum_{k\in\Bbb{Z}} \ee^{-\frac{\pi}{2}z^2 + 2\pi z k - \pi k^2} = \ee^{-\frac{\pi}{2}z^2}\theta_{00}(-\ii z, \ii)
\end{equation}
for $\theta_{00}(z, \tau) = \sum_{k\in\Bbb{Z}}\ee^{2\pi z k + \pi\ii\tau k^2}$ as in Section \ref{ss_theta}, and
\begin{equation}\label{eq_Bargmann_rhs}
	\mathfrak{B}(\ee^{-\ii t (\cdot)^2}\comb_{1/s})(z) = s^{-1/2}\ee^{-\frac{\pi}{2}z^2}\theta_{00}\left(-\frac{\ii}{s}z, \frac{\ii - t}{s^2}\right).
\end{equation}

Conjugation by $\mathfrak{B}$ simplifies the harmonic oscillator because
\begin{equation}\label{eq_Bargmann_HO}
	\mathfrak{B}Q_0\mathfrak{B}^* = z\cdot \frac{\dd}{\dd z} + \frac{1}{2},
\end{equation}
and therefore, recalling \eqref{eq_Bargmann_rho} and using the shorthand $\rho^{-1/2}=\ee^{-\frac{\pi\ii}{4}\alpha}$,
\begin{equation}\label{eq_Bargmann_FrFT}
	\mathfrak{B}\mathcal{F}^\alpha\mathfrak{B}^*v(z) = \mathfrak{B}\ee^{-\frac{\pi\ii}{2}\alpha Q_0}\mathfrak{B}^*v(z) = \rho^{-1/2}v(z/\rho).
\end{equation}
The former equality follows from direct computation or an Egorov theorem for Weyl quantizations; the latter can be seen because the problem $\left(\ii \partial_t + z\frac{\dd}{\dd z}\right)U(t,z) = 0$ is easy to solve. (See for instance \cite{Aleman_Viola_2014a, Aleman_Viola_2018}.)

In Figure \ref{fig_Bargmann} we draw contours of the imaginary part of the normalized Bargmann transform $\Im(\mathfrak{B}\comb_1(z)\ee^{-\frac{\pi}{2}|z|^2})$; this is normalized in the sense that 
\[
	\|\mathfrak{B}\comb_1\|_{\mathfrak{F}} = \|\ee^{-\frac{\pi}{2}|z|^2}\mathfrak{B}\comb_1(z)\|_{L^2(\Bbb{C})}.
\] 
Note that this function is bounded by Proposition \ref{prop_Bargmann_mass_periodic}. By \eqref{eq_Bargmann_FrFT}, we are, in effect, studying symmetries under rotation of this function.

\begin{figure}
\centering
\includegraphics[width = \textwidth]{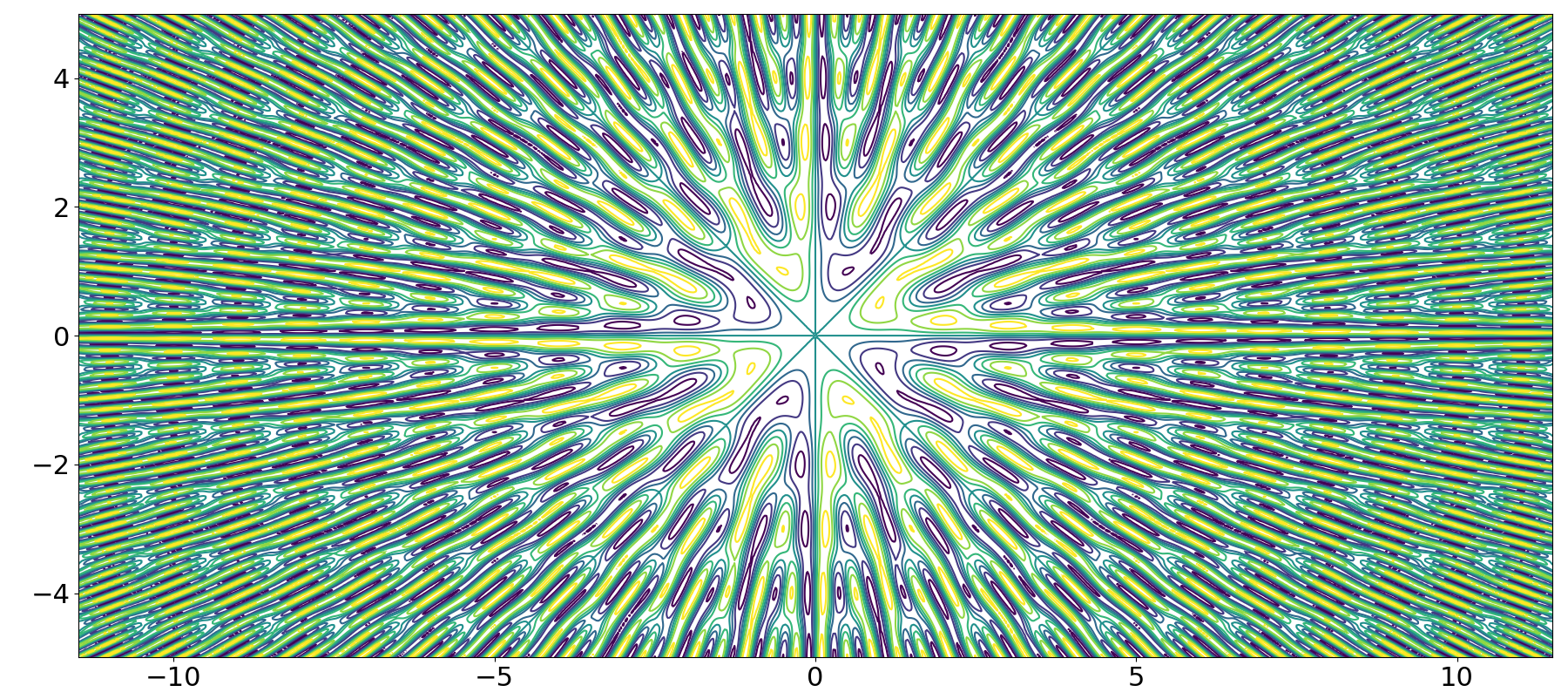}
\caption{Contours of the imaginary part of the normalized Bargmann transform, $\Im(\ee^{-\frac{\pi}{2}|z|^2}\mathfrak{B}\comb_1(z))$ as a function of $z$ in the complex plane.\label{fig_Bargmann}}
\end{figure}

\begin{example}
The Poisson summation formula \eqref{eq_Poisson_original}, for $r = 1$ and on the Bargmann side, reads that
\[
	\ee^{-\frac{\pi\ii}{4}}\mathfrak{B}\comb_1(z/\ii) = \ee^{-\frac{\pi\ii}{4}}\mathfrak{B}\comb_{1}(z).
\]
This corresponds to the symmetry under rotation by $\pi/2$ of the Bargmann transform of $\comb_1$, illustrated in Figure \ref{fig_Bargmann}.
\end{example}

We apply the (unitary) Bargmann transform to \eqref{eq_Bargmann_identify}. Applying \eqref{eq_Bargmann_FrFT} to \eqref{eq_Bargmann_comb} for the left-hand side and using \eqref{eq_Bargmann_rhs} as the right-hand side, we see that Theorem \ref{thm_identify} (for $r = 1$ and $ab, cd$ even) is equivalent to
\[
	\rho^{-1/2}\ee^{-\frac{\pi}{2}(z/\rho)^2}\theta_{00}\left(-\frac{\ii}{\rho} z, \ii\right) = \mu_0 s^{-1/2}\ee^{-\frac{\pi}{2}z^2}\theta_{00}\left(-\frac{\ii}{s}z, \frac{\ii - t}{s^2}\right).
\]
Setting $z_0 = -\ii z/\rho$, we rewrite this equality as
\begin{equation}\label{eq_Bargmann_identity_2}
	\theta_{00}\left(\frac{\rho}{s}z_0, \frac{\ii - t}{s^2}\right) = \frac{1}{\mu_0}\left(\frac{\rho}{s}\right)^{-1/2} \ee^{\frac{\pi}{2}(1-\rho^2)z_0^2} \theta_{00}(z_0, \ii).
\end{equation}

To see that this is a consequence of the classical functional equation presented in Theorem \ref{thm_theta}, we analyze $\rho/s$, $1-\rho^2$, and $(\ii-t)/s^2$. Recalling that $\rho^{-1} = \bar{\rho} = \frac{1}{s}(a-\ii b)$,
\[
	\frac{\rho}{s} = \frac{1}{a-\ii b}.
\]
Next,
\[
	1-\rho^2 = 1 - \frac{(a+\ii b)^2}{a^2 + b^2} = \frac{2b^2 - 2\ii a b}{a^2 + b^2} = -\frac{2\ii b(a+\ii b)}{a^2 + b^2} = -\frac{2\ii b}{a-\ii b}.
\]
Finally, using that $ad-bc = 1$,
\[
	\frac{\ii - t}{s^2} = \frac{\ii(ad-bc) - (ac + bd)}{a^2 + b^2} = \frac{(-c+\ii d)(a+\ii b)}{a^2 + b^2} = \frac{-c + \ii d}{a - \ii b}.
\]
These three computations, when inserted into \eqref{eq_Bargmann_identity_2}, give that Theorem \ref{thm_identify} for $r = 1$ and $ab, cd$ even is equivalent to
\[
	\theta_{00}\left(\frac{z_0}{a-\ii b}, \frac{-c + \ii d}{a-\ii b}\right) = \frac{1}{\mu_0}(a-\ii b)^{1/2} \ee^{\frac{\pi\ii(-b)}{a-\ii b}z_0^2}\theta_{00}(z_0, \ii).
\]
This, in turn, is a special case of the well-known Theorem \ref{thm_theta}.

An observation, useful in controlling the regularity of $\comb_r$ (Proposition \ref{prop_weak_continuity}), is that the Bargmann mass $\ee^{-\pi|z|^2}|\mathfrak{B}\comb_r|^2$ is bounded. This follows from the fact that it is a smooth periodic function.

\begin{proposition}\label{prop_Bargmann_mass_periodic}
For $\mathfrak{B}\comb_r$ the Bargmann transform \eqref{eq_def_Bargmann} of a Dirac comb \eqref{eq_def_comb}, for all $z \in \Bbb{C}$ and $w \in r\Bbb{Z} + \ii\frac{1}{r}\Bbb{Z}$,
\[
	\ee^{-\pi|z-w|^2}|\mathfrak{B}\comb_r(z-w)|^2 = \ee^{-\pi|z|^2}|\mathfrak{B}\comb_r(z)|^2.
\]
\end{proposition}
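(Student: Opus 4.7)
The strategy is to translate this invariance into phase-space shift covariance of the Bargmann mass, then invoke the eigenfunction property \eqref{eq_shift_integer_comb}. The key intermediate claim is that for any $f \in \mathscr{S}'(\Bbb{R})$, any $(x_0, \xi_0) \in \Bbb{R}^2$, and $w := x_0 - \ii\xi_0$,
\[
	\mathfrak{B}\shift_{(x_0, \xi_0)}f(z) = \ee^{-\frac{\pi}{2}|w|^2 + \pi z\bar{w}}\,\mathfrak{B}f(z-w).
\]
To establish this, I would substitute $y = x - x_0$ in the integral \eqref{eq_def_Bargmann} applied to $\shift_{(x_0,\xi_0)}f$, using the definition \eqref{eq_def_shift}. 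The $y$-dependent part of the resulting exponent takes the form $-\pi y^2 + 2\pi(z - x_0 + \ii\xi_0)y = -\pi y^2 + 2\pi(z-w)y$, which after accounting for the factor $\ee^{-\frac{\pi}{2}(z-w)^2}$ produces $\mathfrak{B}f(z-w)$. The remaining terms (including the twisted phase $-\pi \ii x_0\xi_0$ from \eqref{eq_def_shift}, plus the quadratic terms $-\pi x_0^2, 2\pi zx_0, 2\pi\ii\xi_0 x_0, -\frac{\pi}{2}z^2$, together with $\frac{\pi}{2}(z-w)^2$) simplify, after cancellation of the cross terms $\pm \pi\ii x_0\xi_0$, to exactly $-\frac{\pi}{2}|w|^2 + \pi z\bar{w}$.

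The second step is immediate once the first is in hand: taking $|\cdot|^2$ of the displayed identity yields a modulus-squared prefactor $\ee^{-\pi|w|^2 + 2\pi\Re(z\bar{w})}$, and multiplying by $\ee^{-\pi|z|^2}$ produces exactly $\ee^{-\pi|z-w|^2}$. Thus
\[
	\ee^{-\pi|z|^2}\bigl|\mathfrak{B}\shift_{(x_0, \xi_0)}f(z)\bigr|^2 = \ee^{-\pi|z-w|^2}\bigl|\mathfrak{B}f(z-w)\bigr|^2,
\]
expressing that the Bargmann mass transforms under $\shift_{(x_0,\xi_0)}$ by ordinary translation by $w$.

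Finally, I would specialize $f = \comb_r$ and $(x_0, \xi_0) = (jr, k/r)$ with $j,k \in \Bbb{Z}$. By \eqref{eq_shift_integer_comb}, $\shift_{(jr, k/r)}\comb_r = \ee^{-\pi\ii jk}\comb_r$ is a unimodular multiple of $\comb_r$, so the Bargmann masses on the two sides of the displayed identity coincide. As $(j,k)$ ranges over $\Bbb{Z}^2$, $w = jr - \ii k/r$ ranges over $r\Bbb{Z} + \ii \tfrac{1}{r}\Bbb{Z}$ (since $-\tfrac{1}{r}\Bbb{Z} = \tfrac{1}{r}\Bbb{Z}$), giving the proposition.

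The only real obstacle is the bookkeeping in the first step: one needs the twisted normalization $\ee^{-\pi\ii x_0\xi_0}$ in the definition of $\shift_{(x_0,\xi_0)}$ to interact correctly with the Bargmann kernel so that the leftover constants form the clean exponential in $w$ and $\bar{w}$. This is precisely the cancellation that makes the Bargmann representation manifestly covariant under the Heisenberg group, so there is no genuine difficulty.
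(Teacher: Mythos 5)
Your proof is correct, and it is essentially the paper's proof: both hinge on the intertwining relation $\mathfrak{B}\shift_{(x_0,\xi_0)} = \shift_{\Bff{B}(x_0,\xi_0)}\mathfrak{B}$ (you derive it by hand from the kernel; the paper invokes the Egorov relation \eqref{eq_Egorov} with $\Bff{B}$ from \eqref{eq_canon_Bargmann}) and then feed in the eigenfunction identity \eqref{eq_shift_integer_comb} before taking absolute values squared. The only cosmetic difference is a harmless conjugation in the parametrization of $w$, which is immaterial since $r\Bbb{Z}+\ii\frac{1}{r}\Bbb{Z}$ is closed under conjugation.
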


\begin{proof}
We translate \eqref{eq_shift_integer_comb} onto the Bargmann side using the canonical transformation \eqref{eq_canon_Bargmann}:
\[
	\shift_{(rj-\ii k/r, \frac{1}{2}(-\ii rj + k/r))}\mathfrak{B}\comb_r = \ee^{-\pi\ii jk} \mathfrak{B}\comb_r, \quad \forall j,k\in\Bbb{Z}.
\]
If we set $w = rj + \ii k/r \in \Bbb{Z} + \ii \Bbb{Z}$ an arbitrary Gaussian integer, we obtain
\[
	\shift_{(\bar{w}, \frac{1}{2\ii}w)}\mathfrak{B}\comb_r = \ee^{-\frac{\pi\ii}{2} \Im(w^2)}\mathfrak{B}\comb_r
\]
or
\[
	\mathfrak{B}\comb_r(z) = \ee^{-\frac{\pi}{2} |w|^2 + \pi\bar{w}z + \frac{\pi\ii}{2}\Im(w^2)}\mathfrak{B}\comb_r(z-w).
\]

We notice that 
\[
	|z-w|^2 = |z|^2 - 2\Re(\bar{w} z) + |w|^2,
\]
which allows us to write
\[
	\ee^{-\frac{\pi}{2}|z|^2}\mathfrak{B}\comb_r(z) = \ee^{-\frac{\pi}{2}|z-w|^2 + \pi \ii \Im\left(\bar{w}z + \frac{1}{2}w^2\right)}\mathfrak{B}\comb_r(z-w).
\]
The result follows by taking the absolute value squared.
\end{proof}

\section{Periodicity and parity}\label{s_further}

In Section \ref{s_irrational} we observe (without proof) that approximations to $\mathcal{F}^{\alpha}\comb_1$ when $\cot\frac{\pi\alpha}{2} \in \Bbb{R}\backslash \Bbb{Q}$ seem to be at many points nearly periodic, nearly odd, or nearly even. In this section, we consider when $\mathcal{F}^\alpha\comb_1$ is exactly periodic, exactly odd about a point, or exactly even about a point, which is significantly more restrictive.

\subsection{Periodicity and crystalline measures}

To better understand $\mathcal{F}^\alpha \comb_r$, a natural question is whether this function is periodic, as for instance $\mathcal{F}^{1/2}\comb_1$ plainly is (Figure \ref{fig_intro}). Another natural question is whether $\mathcal{F}^\alpha \comb_r$ is a crystalline measure, meaning that the support of it and its Fourier transform are both discrete. We group these two questions together because the answers are the same.

\begin{proposition}
Let $\alpha \in \Bbb{R}$ and $r > 0$. Let the Dirac comb $\comb_r$ be as in \eqref{eq_def_comb}, and let the fractional Fourier transform $\mathcal{F}^\alpha$ be as in \eqref{eq_def_Schro_FrFT}. Then $\mathcal{F}^\alpha \comb_r$ is periodic if and only if the support of $\mathcal{F}(\mathcal{F}^\alpha \comb_r)$ is discrete. Moreover, if $\alpha \notin \Bbb{Z}$ and if the support of $\mathcal{F}^\alpha \comb_r$ is discrete (as in Theorem \ref{thm_support}), then the support of $\mathcal{F}(\mathcal{F}^\alpha \comb_r)$ is also discrete if and only if $r^4 \in \Bbb{Q}$.
\end{proposition}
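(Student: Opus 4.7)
The plan is to reduce both claims to Theorem~\ref{thm_support} via the identity $\mathcal{F}(\mathcal{F}^\alpha\comb_r) = \mathcal{F}^{\alpha+1}\comb_r$ and the Egorov relation \eqref{eq_Egorov_F}. The main subtleties are just sign and parity bookkeeping in \eqref{eq_shift_integer_comb}, which are routine once the setup is in place.

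For the first equivalence, I would characterize periodicity directly. Writing $T$-periodicity of $\mathcal{F}^\alpha\comb_r$ as $\mathcal{S}_{(T,0)}\mathcal{F}^\alpha\comb_r = \mathcal{F}^\alpha\comb_r$ and applying \eqref{eq_Egorov_F} with $\Bff{F}_\alpha^{-1} = \Bff{F}_{-\alpha}$, this becomes $\mathcal{S}_{(T\cos\frac{\pi\alpha}{2},\, T\sin\frac{\pi\alpha}{2})}\comb_r = \comb_r$. By \eqref{eq_shift_integer_comb}, this holds exactly when $T\cos\frac{\pi\alpha}{2} = jr$ and $T\sin\frac{\pi\alpha}{2} = k/r$ for integers $j, k$ with $jk$ even; since doubling any nonzero integer solution restores even parity, existence of a period $T\neq 0$ is equivalent to existence of $(j,k)\in\Bbb{Z}^2\setminus\{0\}$ satisfying $jr^2\sin\frac{\pi\alpha}{2} = k\cos\frac{\pi\alpha}{2}$. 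On the other hand, Theorem~\ref{thm_support} applied to $\mathcal{F}^{\alpha+1}\comb_r$, combined with $\cos\frac{\pi(\alpha+1)}{2} = -\sin\frac{\pi\alpha}{2}$ and $\sin\frac{\pi(\alpha+1)}{2} = \cos\frac{\pi\alpha}{2}$, produces the same linear dependence condition on $r\sin\frac{\pi\alpha}{2}$ and $\frac{1}{r}\cos\frac{\pi\alpha}{2}$ over $\Bbb{Z}$ (the sign is irrelevant). The degenerate integer values of $\alpha$, for which $\mathcal{F}^\alpha\comb_r$ is itself a Dirac comb up to constants, can be verified separately.

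For the second part, suppose $\alpha\notin\Bbb{Z}$ and $\mathcal{F}^\alpha\comb_r$ has discrete support. By Remark~\ref{rem_rho_s}, $\ee^{\pi\ii\alpha/2} = s^{-1}(a/r + \ii br)$ for relatively prime $a,b\in\Bbb{Z}$, and since $\alpha\notin\Bbb{Z}$ neither $\cos\frac{\pi\alpha}{2}$ nor $\sin\frac{\pi\alpha}{2}$ vanishes, forcing $ab\neq 0$. Substituting $\cos\frac{\pi\alpha}{2} = a/(rs)$ and $\sin\frac{\pi\alpha}{2} = br/s$ into the linear dependence condition from the previous paragraph, $\mathcal{F}^{\alpha+1}\comb_r$ has discrete support iff there exist $(m,n)\in\Bbb{Z}^2\setminus\{0\}$ with $mbr^4 = na$. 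Since $ab\neq 0$, this forces $m\neq 0$, and the condition becomes $r^4 = na/(mb) \in \Bbb{Q}$. Conversely, writing $r^4 = p/q$ with $p,q\in\Bbb{Z}_{>0}$, the choice $(m,n) = (qa, pb)$ provides a nontrivial solution, completing the equivalence.
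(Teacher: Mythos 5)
Your proposal is correct and takes essentially the same route as the paper: characterize periodicity via shift invariance $\mathcal{S}_{(T,0)}\mathcal{F}^\alpha\comb_r = \mathcal{F}^\alpha\comb_r$, conjugate through the Egorov relation \eqref{eq_Egorov_F} to reduce to \eqref{eq_shift_integer_comb}, and match the resulting linear-dependence condition against Theorem~\ref{thm_support} applied to $\mathcal{F}^{\alpha+1}\comb_r$. Your explicit handling of the $jk$-even parity requirement (doubling a candidate period if necessary) is a slight tightening of a point the paper leaves implicit in its converse direction, but the argument is the same.
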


\begin{proof}
The distribution $\mathcal{F}^\alpha \comb_r$ is periodic if and only if there exists some $x_0 \in \Bbb{R}\backslash \{0\}$ such that
\[
	\mathcal{S}_{(x_0, 0)} \mathcal{F}^\alpha \comb_r = \mathcal{F}^\alpha \comb_r.
\]
By the Egorov relation \eqref{eq_Egorov} for $\mathcal{F}^\alpha$ given by \eqref{eq_def_F_canon}, this is equivalent to
\[
	\mathcal{F}^\alpha \mathcal{S}_{x_0(\cos\frac{\pi\alpha}{2}, \sin\frac{\pi\alpha}{2})}\comb_r = \mathcal{F}^\alpha \comb_r.
\]
Since we may eliminate the operator $\mathcal{F}^\alpha$ from both sides, this is possible if and only if 
\begin{equation}\label{eq_periodicity_suff}
	x_0(\cos\frac{\pi\alpha}{2}, \sin\frac{\pi\alpha}{2}) = (rm, n/r)
\end{equation}
for $m, n\in\Bbb{Z}$ for which $mn$ is even; see \eqref{eq_shift_integer_comb}. Because 
\[
	\left(\cos(\theta+\frac{\pi}{2}), \sin(\theta+\frac{\pi}{2})\right) = (-\sin\theta, \cos \theta),
\]
\[
	m r\cos\frac{\pi(\alpha+1)}{2} + n\frac{1}{r}\sin\frac{\pi(\alpha+1)}{2} = \frac{1}{x_0}(-mn + nm) = 0,
\]
so $\mathcal{F}(\mathcal{F}^\alpha \comb_r) = \mathcal{F}^{\alpha+1}\comb_r$ has discrete support by Theorem \ref{thm_identify}.

Conversely, if $\mathcal{F}(\mathcal{F}^\alpha \comb_r)$ has discrete support, then by Theorem \ref{thm_support} and Remark \ref{rem_rho_s}, there exists $m,n\in\Bbb{Z}$ such that
\[
	\ee^{\frac{\pi}{2}(\alpha+1)} = \frac{1}{s}(\frac{n}{r}+\ii mr), \quad s = \sqrt{(n/r)^2 + (mr)^2}.
\]
By the preceding discussion, in particular \eqref{eq_periodicity_suff}, this means that $\mathcal{F}^\alpha \comb_r$ is periodic with period $s = \sqrt{(n/r)^2 + (mr)^2}$.

Finally, if $\alpha \notin \Bbb{Z}$, then $\mathcal{F}^\alpha \comb_r$ has discrete support if and only if $\frac{1}{r^2}\cot \frac{\pi\alpha}{2} = q_1 \in \Bbb{Q}$, by Theorem \ref{thm_support}. In this case, $\mathcal{F}(\mathcal{F}^\alpha \comb_r)$ is discrete if and only if
\[
	\frac{1}{r^2}\cot\frac{\pi(\alpha+1)}{2} = -\frac{1}{r^2}\tan \frac{\pi\alpha}{2} = - \frac{1}{r^4 q_1} \in \Bbb{Q},
\]
which holds if and only if $r^4 \in \Bbb{Q}$.
\end{proof}

\subsection{Parity}

A function $f(x)$ is even/odd at $x_0 \in \Bbb{R}$ if $y\mapsto f(y+x_0)$ is even/odd, meaning that $f(y+x_0) = \pm f(-y + x_0)$. Recalling that $\ii \mathcal{F}^2 f(x) = f(-x)$, we say that a Schwartz distribution $f \in \mathscr{S}'(\Bbb{R})$ is even/odd at $x_0$ if
\[
	\ii\mathcal{F}^2 \shift_{(-x_0, 0)} f = \pm \shift_{(-x_0, 0)}f.
\]

Take $f = \mathcal{F}^\alpha \comb_r$. As usual, write $\ee^{\frac{\pi\ii}{2}\alpha} = \rho_1 + \ii \rho_2$, and recall the Egorov relation $\shift_{(1, 0)}\mathcal{F}^\alpha = \mathcal{F}^\alpha \shift_{(\rho_1, \rho_2)}$. We also use that $\comb_r$ is even in computing
\begin{align*}
	\ii \mathcal{F}^2 \shift_{(-x_0, 0)}\mathcal{F}^\alpha \comb_r &= \pm \shift_{(-x_0, 0)}\mathcal{F}^\alpha \comb_r
	\\ \iff \shift_{(x_0, 0)}\mathcal{F}^{\alpha}\ii\mathcal{F}^2 \comb_r &= \pm \shift_{(-x_0, 0)}\mathcal{F}^\alpha \comb_r
	\\  \iff \mathcal{F}^\alpha\shift_{x_0(\rho_1, \rho_2)}\comb_r &= \pm \mathcal{F}^\alpha\shift_{-x_0(\rho_1, \rho_2)}\comb_r
	\\ \iff \shift_{2x_0(\rho_1, \rho_2)} \comb_r &= \pm \comb_r.
\end{align*}
Recalling \eqref{eq_shift_integer_comb}, we obtain the following proposition.

\begin{proposition}\label{prop_parity}
Let $\alpha\in\Bbb{R}$ and write $\ee^{\frac{\pi\ii}{2}\alpha} = \rho_1 + \ii \rho_2$ for $\rho_1, \rho_2\in\Bbb{R}$. Let $\mathcal{F}^\alpha \comb_1$ be the fractional Fourier transform of the Dirac comb. Then $\mathcal{F}^\alpha \comb_r$ is even or odd around $x_0\in\Bbb{R}$ if and only if
\[
	2x_0(\rho_1, \rho_2) \in r\Bbb{Z}\times \frac{1}{r}\Bbb{Z},
\]
and in this case $\mathcal{F}^\alpha \comb_1$ is even if $4x_0^2\rho_1\rho_2$ is even and odd if $4x_0^2\rho_1\rho_2$ is odd.
\end{proposition}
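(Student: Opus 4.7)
The plan is to carry out the brief computation already sketched immediately before the statement, and then reduce matters to \eqref{eq_shift_integer_comb}. I would begin by unpacking the definition: $\mathcal{F}^\alpha\comb_r$ is even (resp.\ odd) at $x_0$ iff
\[
	\ii\mathcal{F}^2\shift_{(-x_0,0)}\mathcal{F}^\alpha\comb_r = \pm\shift_{(-x_0,0)}\mathcal{F}^\alpha\comb_r.
\]
Using the Egorov relation for $\mathcal{F}^2$ (which simply flips the sign of a shift) together with the evenness of $\comb_r$ (so that $\ii\mathcal{F}^2\comb_r=\comb_r$), and then applying \eqref{eq_Egorov_F} to pass the shifts through $\mathcal{F}^\alpha$ and invoking the invertibility of $\mathcal{F}^\alpha$ on $\mathscr{S}'(\Bbb{R})$, the parity condition reduces exactly to $\shift_{2x_0(\rho_1,\rho_2)}\comb_r = \pm\comb_r$.

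The next step is to characterize which $(y_0,\eta_0)\in\Bbb{R}^2$ satisfy $\shift_{(y_0,\eta_0)}\comb_r = \pm\comb_r$. Sufficiency is precisely \eqref{eq_shift_integer_comb}: when $(y_0,\eta_0)=(jr,k/r)$ with $j,k\in\Bbb{Z}$ the eigenvalue is $\ee^{-\pi\ii jk}\in\{\pm 1\}$. For necessity, $\supp \shift_{(y_0,\eta_0)}\comb_r = y_0 + r\Bbb{Z}$ must equal $r\Bbb{Z}$, forcing $y_0 \in r\Bbb{Z}$; writing $y_0 = jr$, the remaining phase $\ee^{\pi\ii jr\eta_0} \ee^{2\pi\ii \eta_0 rk}$ must be constant (and $\pm 1$) in $k\in\Bbb{Z}$, which forces $\eta_0 r\in\Bbb{Z}$ and hence $\eta_0\in\tfrac{1}{r}\Bbb{Z}$.

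Substituting $(y_0,\eta_0)=2x_0(\rho_1,\rho_2)$ yields $j = 2x_0\rho_1/r$ and $k = 2x_0\rho_2 r$, so $jk=4x_0^2\rho_1\rho_2\in\Bbb{Z}$. The eigenvalue $\ee^{-\pi\ii jk}$ is then $+1$ when $4x_0^2\rho_1\rho_2$ is even and $-1$ when it is odd, giving the sign claim in the proposition.

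The main (very mild) obstacle is the converse direction above, specifically ruling out $\eta_0 r \in \tfrac12 + \Bbb{Z}$: writing $2\eta_0 r = \ell$ odd, the factor $(-1)^{\ell k}$ alternates with $k$, so $\shift_{(y_0,\eta_0)}\comb_r$ cannot be a scalar multiple of $\comb_r$. Everything else is a direct transcription of the machinery already in place.
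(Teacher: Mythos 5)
Your proposal is correct and follows essentially the same route as the paper: unpack parity via $\ii\mathcal{F}^2$, commute shifts through $\mathcal{F}^2$ and $\mathcal{F}^\alpha$ via the Egorov relations, use evenness of $\comb_r$ and invertibility of $\mathcal{F}^\alpha$, and reduce to $\shift_{2x_0(\rho_1,\rho_2)}\comb_r = \pm\comb_r$. The paper then simply invokes \eqref{eq_shift_integer_comb} and states the proposition, taking the necessity direction (that only lattice shifts send $\comb_r$ to a scalar multiple of itself) as understood; you spell out that step explicitly, which is a welcome bit of rigor (note a harmless sign slip in your phase $\ee^{\pm\pi\ii jr\eta_0}$, which is immaterial once $r\eta_0\in\Bbb{Z}$ since $\ee^{\pi\ii n}=\ee^{-\pi\ii n}$ for integer $n$).
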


\begin{example}
If $\alpha = \frac{1}{2}$ then $2(\rho_1, \rho_2) = (\sqrt{2}, \sqrt{2})$. Then $\mathcal{F}^{\frac{1}{2}}\comb_1$ is even at $x_0$ if and only if $x_0 = \frac{2k}{\sqrt{2}}$ and odd if and only if $x_0 = \frac{2k+1}{\sqrt{2}}$. This agrees with the formula identified and illustrated in Section \ref{ss_brushes}.
\end{example}

\section{Questions of continuity in $\alpha$}\label{s_continuity}

The most interesting aspect (to the author) of the study of the fractional Fourier transform of Dirac combs is the question of how the result varies as $\alpha$ varies.

As we will discuss, it is obvious that the function $\alpha \mapsto \mathcal{F}^\alpha \comb_1$ is far from being absolutely continuous from $\Bbb{R}$ to the set of measures, and it is obvious that it is continuous from $\Bbb{R}$ to $\mathscr{S}'(\Bbb{R})$.

We show that the function $\alpha \mapsto \mathcal{F}^\alpha\comb_1$ is continuous from $\Bbb{R}$ to $Q_0^{-p}L^2(\Bbb{R})$ for any $p > 1/2$, the image being essentially a Sobolev space defined by the quantum harmonic oscillator instead of $D_x$.

We also show that the antiderivative $\Pi_{\alpha}(x)$ (Definition \ref{def_antiderivative}) is not locally uniformly continuous in $\alpha$. It has, however, a particular behavior as $\alpha \to 0^+$ in certain regimes: the antiderivative tends to a rescaled Fresnel integral, leading to the appearance of Euler spirals when the values of $\Pi_\alpha(x)$ are traced in the complex plane.

Within these Euler spirals one observes repeating motifs depending on $b$ if $\tan \frac{\pi\alpha}{2} = \frac{b}{a+jb}$ and $j \to \infty$. We show that these repeating motifs are the graphs of Gauss sums, and one can use this observation to express the eighth root of unity $\mu$ in Theorem \ref{thm_mu_8} via a Gauss sum (which is classical in the study of theta functions).

\subsection{Lack of absolute continuity}

It is obvious that the absolute value of the $\mathcal{F}^\alpha \comb_1$ (when we can describe this measure via Theorems \ref{thm_support} and \ref{thm_identify}) diverges wildly. It suffices to observe that when $\ee^{\frac{\pi\ii}{2}\alpha} = \frac{1}{s}(a+\ii b)$ as in \eqref{eq_def_rho_s}, the absolute value (in the sense of measures) of $\mathcal{F}^\alpha \comb_1$ is
\[
	|\mathcal{F}^\alpha \comb_1(x)| = s^{-1/2}\sum_{k \in \Bbb{Z} + \{\frac{ab}{2}\}} \delta(x - \frac{k}{s}),
\]
which charges an interval of length $L$ with a mass of $Ls^{1/2}$ (up to the obvious rounding error depending on exactly where the endpoints of the interval fall). Because $\{a,b\in\Bbb{Z} \::\: L(a^2+b^2)^{1/4} < C\}$ is finite for any $L, C > 0$, we have the following proposition on divergence of the absolute value of $\mathcal{F}^\alpha \comb_1$ (which applies equally well to any $\mathcal{F}^\alpha \comb_r$ for $r > 0$ fixed).

\begin{proposition}
For any open set $U \subset \Bbb{R}$ and for any $C > 0$, the number of $\alpha \in (-2, 2]$ such that $\cos \frac{\pi\alpha}{2}$ and $\sin\frac{\pi\alpha}{2}$ are linearly dependent over $\Bbb{Z}$ and for which
\[
	\int_U |\mathcal{F}^\alpha \comb_1(x)|\,\dd x \leq C
\]
is finite.
\end{proposition}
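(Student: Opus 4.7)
The plan is to use the explicit formula \eqref{eq_identify_sum_even_odd} for $|\mathcal{F}^\alpha \comb_1|$ as a measure, namely that when $\ee^{\frac{\pi\ii}{2}\alpha} = \frac{1}{s}(a+\ii b)$ with $a,b$ relatively prime (and no shift, for simplicity of exposition), we have
\[
\bigl|\mathcal{F}^\alpha \comb_1\bigr| = s^{-1/2}\!\!\sum_{k \in \Bbb{Z} + \{ab/2\}}\!\!\delta\!\left(x - \tfrac{k}{s}\right).
\]
The masses all have size $s^{-1/2}$ and are spaced at distance $1/s$. The proof reduces to the counting lemma that $\{(a,b) \in \Bbb{Z}^2 \text{ relatively prime} : a^2+b^2 \le M\}$ is finite, and the observation that the map $\alpha \mapsto (a,b)/s \in S^1$ restricted to $(-2,2]$ is injective.

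The concrete steps are as follows. First, shrink $U$ to a closed subinterval $I = [x_0, x_0 + L] \subset U$ of length $L > 0$. For any $\alpha$ with discrete support, the number of points of $\frac{1}{s}(\Bbb{Z}+\{ab/2\})$ lying in $I$ is at least $\lfloor Ls \rfloor \ge Ls - 1$. Therefore
\[
\int_U |\mathcal{F}^\alpha \comb_1(x)|\,\dd x \;\ge\; s^{-1/2}(Ls - 1) \;=\; Ls^{1/2} - s^{-1/2}.
\]
Imposing the bound $\le C$ forces $Ls^{1/2} \le C + s^{-1/2}$, hence $s \le ((C+1)/L)^2 =: M$.

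Next, the set of relatively prime pairs $(a,b) \in \Bbb{Z}^2$ with $s=\sqrt{a^2+b^2}\le M$ is obviously finite, and each such pair uniquely determines $\ee^{\frac{\pi\ii}{2}\alpha} = \frac{1}{s}(a+\ii b)$, hence a unique $\alpha \in (-2,2]$. This yields the finite bound. The only minor obstacle is bookkeeping in the case $ab$ is odd, where the Dirac masses sit at half-integer multiples of $1/s$; but this only translates the support by $\frac{ab}{2s}$ and does not affect the counting estimate $\lfloor Ls \rfloor$, so the argument goes through unchanged. One should note that the half-integer shift in \eqref{eq_identify_sum_even_odd} is a translation of the support, not an alteration of the spacing or individual mass, so the total variation on $I$ is still $s^{-1/2}$ times the number of lattice points in $sI - ab/2$.
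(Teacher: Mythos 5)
Your proof is correct and follows the same strategy as the paper: both start from the explicit formula for $|\mathcal{F}^\alpha\comb_1|$ as a measure, observe that any fixed interval of length $L$ receives mass comparable to $Ls^{1/2}$, and then conclude by the finiteness of the set of coprime pairs $(a,b)$ with $a^2+b^2$ bounded. You have simply supplied the explicit counting inequality (number of lattice points in $I$ is at least $\lfloor Ls\rfloor \ge Ls-1$) and the observation about injectivity of $\alpha\mapsto(a,b)/s$ on $(-2,2]$, both of which the paper leaves implicit.
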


\subsection{Weak continuity in $\alpha$}

When $Q_0$ the quantum harmonic oscillator defined in \eqref{eq_def_Q0}, we recall that the Schwartz space $\mathscr{S}(\Bbb{R})$ is simply 
\[
	\mathscr{S}(\Bbb{R}) = \{f\in L^2(\Bbb{R}) \::\: Q_0^N f \in L^2(\Bbb{R}),~~\forall N \in \Bbb{N}\},
\]
with the associated family of seminorms. Since $\mathcal{F}^\alpha$ is a function of $Q_0$, it is obvious that $\mathcal{F}^\alpha$ preserves the seminorms of $\mathscr{S}(\Bbb{R})$. Because $\comb_1\in\mathscr{S}'(\Bbb{R})$, by duality it is clear that $\alpha \mapsto \mathcal{F}^\alpha \comb_1$ is continuous from $\Bbb{R}$ to $\mathscr{S}'(\Bbb{R})$.

We would like to have more precise information on how much regularity (and decay) is required to make $\mathcal{F}^\alpha \comb_1$ continuous in $\alpha \in \Bbb{R}$.

\begin{proposition}\label{prop_weak_continuity}
Let the Dirac comb $\comb_r$ be as defined in \eqref{eq_def_comb} and let the harmonic oscillator $Q_0$ be as in \eqref{eq_def_Schro_FrFT}. For any $r > 0$,
\[
	Q_0^{-p}\comb_r \in L^2(\Bbb{R}), \quad \forall p > \frac{1}{2}.
\]
\end{proposition}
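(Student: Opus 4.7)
The plan is to pass to the Bargmann side, where by \eqref{eq_Bargmann_HO} the operator $Q_0$ becomes $z\partial_z + \tfrac{1}{2}$, and to combine a pointwise bound for $\mathfrak{B}\comb_r$ with the subordination formula for fractional powers. Proposition \ref{prop_Bargmann_mass_periodic} tells us that $z\mapsto e^{-\pi|z|^2}|\mathfrak{B}\comb_r(z)|^2$ is continuous and doubly periodic on the lattice $r\Bbb{Z}+\ii\Bbb{Z}/r$, hence bounded by some $M>0$, giving $|\mathfrak{B}\comb_r(z)|\leq M^{1/2}e^{\pi|z|^2/2}$. Meanwhile the semigroup $\{e^{-tQ_0}\}_{t\geq 0}$ acts on the Bargmann side by dilation, $e^{-tQ_0}u(z)=e^{-t/2}u(e^{-t}z)$, so the identity $Q_0^{-p}=\Gamma(p)^{-1}\int_0^\infty t^{p-1}e^{-tQ_0}\,dt$ yields
\[
	\mathfrak{B}(Q_0^{-p}\comb_r)(z) \;=\; \frac{1}{\Gamma(p)}\int_0^\infty t^{p-1}e^{-t/2}\mathfrak{B}\comb_r(e^{-t}z)\,dt.
\]

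First I would insert the pointwise bound, square, integrate against the Bargmann measure $\sqrt{2}\,e^{-\pi|z|^2}\,dA(z)$, and apply Fubini. The inner Gaussian integration in $z$ is explicit (the remaining Gaussian has exponent $-\pi(1 - \tfrac{1}{2}(e^{-2t_1}+e^{-2t_2}))|z|^2$), and since $\mathfrak{B}$ is an isometry from $L^2(\Bbb{R})$ onto $\mathfrak{F}$, the problem reduces to showing that
\[
	\int_0^\infty\!\!\int_0^\infty \frac{t_1^{p-1}t_2^{p-1}e^{-(t_1+t_2)/2}}{2-e^{-2t_1}-e^{-2t_2}}\,dt_1\,dt_2 \;<\; \infty.
\]
Exponential decay at infinity is automatic from the factor $e^{-(t_1+t_2)/2}$, together with the fact that the denominator tends to $2$.

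The only real obstacle is the behavior at the origin, where $2 - e^{-2t_1} - e^{-2t_2} \sim 2(t_1+t_2)$ and the integrand is asymptotic to $t_1^{p-1}t_2^{p-1}/(t_1+t_2)$. Switching to polar coordinates $(t_1,t_2)=(r\cos\theta,r\sin\theta)$ separates the singular part into a radial piece $\int_0^1 r^{2p-2}\,dr$, which converges precisely when $p>1/2$ — matching the stated hypothesis exactly — and an angular piece $\int_0^{\pi/2}(\cos\theta\sin\theta)^{p-1}(\cos\theta+\sin\theta)^{-1}\,d\theta$, which is finite for all $p>0$. This closes the argument.
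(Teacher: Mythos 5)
Your proof is correct and rests on the same two pillars as the paper's: the subordination formula $Q_0^{-p}=\Gamma(p)^{-1}\int_0^\infty t^{p-1}e^{-tQ_0}\,\dd t$ and the boundedness of the normalized Bargmann mass $e^{-\pi|z|^2}|\mathfrak{B}\comb_r(z)|^2$ from Proposition~\ref{prop_Bargmann_mass_periodic}. Where you diverge is the endgame. The paper first computes $\|e^{-tQ_0}\comb_r\|^2\leq C/\sinh t$ (a single Gaussian integral against the bounded Bargmann mass) and then applies Minkowski's integral inequality, reducing to the one-variable integral $\int_0^\infty t^{p-1}(\sinh t)^{-1/2}\,\dd t$, which converges iff $p>1/2$ since $\sinh t\sim t$ at the origin. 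You instead square the subordinated representation of $\mathfrak{B}(Q_0^{-p}\comb_r)$, invoke Tonelli, integrate out the Gaussian in $z$, and land on a double integral in $(t_1,t_2)$ whose singularity at the origin you resolve via polar coordinates. Both are valid and produce the same threshold; the paper's Minkowski route avoids the double integral and the change of variables, while yours makes the Gaussian integration over $z$ more explicit. Two small remarks: the constant should indeed be $\Gamma(p)^{-1}$ as you wrote (the paper has a harmless typo with $\Gamma(p)$), and your claim that the decay at infinity is "automatic" deserves the extra sentence that the denominator $2-e^{-2t_1}-e^{-2t_2}$ is bounded below by $1-e^{-2}$ once either $t_i\geq 1$, so the only singular behavior really is at the origin, as you handle.
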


\begin{remark} For brevity, we will use the Bargmann transform to control $\ee^{-tQ_0}\comb_1$, using methods similar to those in \cite{Aleman_Viola_2014a, Aleman_Viola_2018}. An alternate proof, omitted here, is to explicitly compute $\ee^{-tQ_0}\comb_1$ using the Mehler kernel. For small $t > 0$, one obtains an almost orthogonal family of Gaussians and the sum of the norms gives the somewhat sharper estimate
\[
	\|\ee^{-tQ_0}\comb_1\|^2 = \frac{1}{2\sinh t}(1+\BigO(\ee^{-c/t}))
\]
from which one can deduce the result.

To quickly check the plausibility of the result of the proposition, heuristically, on the Bargmann side, $Q_0$ acts like $1+|z|^2$ (see, for instance, \cite[Prop.~B.2]{Viola_2012a}). Therefore we expect
\[
	\|Q_0^{-p}\comb_1\|^2 \approx \int_{\Bbb{C}} (1+|z|^2)^{-2p}|\mathfrak{B}\comb_r(z)|^2\ee^{-\pi|z|^2}\,\dd \Re z\, \dd \Im z
\]
which converges whenever $p > \frac{1}{2}$ because $\ee^{-\pi |z|^2}|\mathfrak{B}\comb_r(z)|^2 \in L^\infty(\Bbb{C})$ by Proposition \ref{prop_Bargmann_mass_periodic}.
\end{remark}

\begin{proof}
Because $\mathscr{S}(\Bbb{R}) = \{f \in L^2(\Bbb{R})\::\: Q_0^k f \in L^2(\Bbb{R})\textnormal{ for all }k \in \Bbb{N}\}$, we have that $Q_0^\beta$ acts continuously on $\mathscr{S}(\Bbb{R})$ and therefore on $\mathscr{S}'(\Bbb{R})$ which contains $\comb_r$. If $p > 0$ is such that the integral converges absolutely,
\begin{equation}\label{eq_continuity_Laplace}
	Q_0^{-p}f = \Gamma(p) \int_0^\infty t^{p - 1}\ee^{-tQ_0}f\,\dd t.
\end{equation}
If $Q_0 f = \lambda f$, this comes from
\[
	\int_0^\infty t^{p-1}\ee^{-t\lambda}\,\dd t = \lambda^{-p}\int_0^\infty s^{p-1}\ee^{-s}\,\dd s = \lambda^{-p}\Gamma(p),
\]
and for general functions this comes from the decomposition of $f$ into (Hermite) eigenfunctions of $Q_0$.

It therefore is enough to control $\ee^{-tQ_0}\comb_r$ which is in $\mathscr{S}(\Bbb{R})$ when $t > 0$.

Conjugating by the unitary Bargmann transform, we obtain via \eqref{eq_Bargmann_HO} and \eqref{eq_Bargmann_FrFT} that
\begin{align*}
	\|\ee^{-tQ_0}\comb_r\|^2 &= \|\ee^{-t(z\cdot \frac{\dd}{\dd z} + \frac{1}{2})}\mathfrak{B}\comb_r\|_{\mathfrak{F}}^2
	\\ &= \|\ee^{-t/2}\mathfrak{B}\comb_r(\ee^{-t}z)\|_{\mathfrak{F}}^2
	\\ &= \int \ee^{-t}|\mathfrak{B}\comb_r(\ee^{-t}z)|^2\ee^{-\pi|z|^2}\,\dd \Re z \, \dd \Im z
	\\ &= \ee^{t}\int |\mathfrak{B}\comb_r(z)|^2\ee^{-\pi|\ee^{2t}z|^2}\,\dd \Re z \, \dd \Im z
	\\ &= \ee^{t}\int \ee^{-\pi(\ee^{2t}-1)|z|^2}|\mathfrak{B}\comb_r(z)|^2 \ee^{-\pi|z|^2}\,\dd \Re z \, \dd \Im z.
\end{align*}
Because $|\mathfrak{B}\comb_r(z)|^2 \ee^{-\pi|z|^2} \in L^\infty(\Bbb{C})$ (Proposition \ref{prop_Bargmann_mass_periodic}) and because 
\[
	\ee^t\int \ee^{-\pi(\ee^{2t}-1)|z|^2}\,\dd \Re z \,\dd \Im z = \frac{\ee^{t}}{\ee^{2t}-1} = \frac{1}{2\sinh t},
\]
we see that there exists some constant $C > 0$ such that
\[
	\|\ee^{-tQ_0}\comb_r\|^2 \leq \frac{C}{\sinh t}, \quad \forall t > 0.
\]

Therefore
\[
	\|Q_0^{-p}\comb_1\| \leq \left\|\int_0^{\infty} t^{p-1}\ee^{-tQ_0}\comb_r\,\dd t\right\| \leq \sqrt{C}\int_0^{\infty} \frac{t^{p-1}}{\sqrt{\sinh t}}\,\dd t.
\]
Since $\sinh t \sim t$ when $t$ is small and $\sinh t \sim \ee^t$ when $t$ is large, the last integral is finite if and only if $p > \frac{1}{2}$. This proves the proposition.
\end{proof}

\begin{corollary}
Let $f \in \mathscr{S}'(\Bbb{R})$ and define, where possible,
\[
	g(\alpha) = \langle \mathcal{F}^\alpha \comb_r, f\rangle.
\]
If there exists some $\eps > 0$ such that $Q_0^{\frac{1}{2}+\eps} f \in L^2(\Bbb{R})$, then $g$ is continuous on $\Bbb{R}$. If there exists some $\eps > 0$ such that $Q_0^{\frac{3}{2}+\eps} f \in L^2(\Bbb{R})$, then $g$ is $C^1$ on $\Bbb{R}$. If there exists some $t > 0$ such that $\ee^{tQ_0}f\in L^2(\Bbb{R})$, then $g$ is analytic on $\{\frac{\pi}{2}\Im \alpha < t\}$. 
\end{corollary}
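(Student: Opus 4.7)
The plan is to use Proposition \ref{prop_weak_continuity} to transfer smoothness from $\comb_r$ to the test function $f$, exploiting that $\mathcal{F}^\alpha = \ee^{-\frac{\pi\ii\alpha}{2}Q_0}$ by \eqref{eq_def_Schro_FrFT} and hence commutes with every (spectrally defined) power of $Q_0$ on $\mathscr{S}'(\Bbb{R})$. For any $p > 0$, using the self-adjointness of $Q_0$ (see Remark \ref{rem_unitary}), one may write
\[
	g(\alpha) = \langle \mathcal{F}^\alpha \comb_r, f\rangle = \langle \mathcal{F}^\alpha Q_0^{-p}\comb_r,\, Q_0^p f\rangle.
\]
For the first assertion, I would take $p = \tfrac{1}{2}+\eps$, so that $u := Q_0^{-p}\comb_r \in L^2$ by Proposition \ref{prop_weak_continuity} and $v := Q_0^p f \in L^2$ by hypothesis. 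Since $\{\mathcal{F}^\alpha\}_{\alpha \in \Bbb{R}}$ is a strongly continuous one-parameter unitary group on $L^2(\Bbb{R})$ (Stone's theorem applied to the self-adjoint generator $Q_0$, or directly from the Mehler kernel on any compact set of $\alpha$), the map $\alpha \mapsto \mathcal{F}^\alpha u$ is continuous in $L^2$, and therefore $g(\alpha) = \langle \mathcal{F}^\alpha u, v\rangle$ is continuous by Cauchy--Schwarz.

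For the $C^1$ statement, I would keep $p = \tfrac{1}{2}+\eps$ and differentiate using the group property with generator $-\frac{\pi\ii}{2}Q_0$, moving the factor of $Q_0$ across to the right side of the pairing by self-adjointness:
\[
	g'(\alpha) = -\frac{\pi\ii}{2}\langle \mathcal{F}^\alpha Q_0^{-p}\comb_r,\, Q_0^{\,p+1} f\rangle.
\]
The hypothesis $Q_0^{\frac{3}{2}+\eps}f \in L^2$ makes $Q_0^{p+1}f$ an $L^2$ vector, and then the argument used in the first part (with $Q_0 f$ in place of $f$) both rigorously justifies differentiating under the bracket and gives continuity of $g'$.

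For the analyticity statement, I would diagonalize in the Hermite basis: let $\{\phi_n\}_{n\geq 0}$ be the orthonormal eigenfunctions of $Q_0$ with eigenvalues $\lambda_n = n+\tfrac{1}{2}$, and set $c_n = \langle \comb_r, \phi_n\rangle$, $f_n = \langle f, \phi_n\rangle$. The spectral calculus extends $\mathcal{F}^\alpha$ to complex $\alpha$ via
\[
	g(\alpha) = \sum_{n\geq 0} \ee^{-\frac{\pi\ii\alpha}{2}\lambda_n}\, c_n\,\overline{f_n},
\]
each term being entire in $\alpha$. Proposition \ref{prop_weak_continuity} gives $\sum_n \lambda_n^{-2p}|c_n|^2 < \infty$ for any $p > \tfrac{1}{2}$, and $\ee^{tQ_0}f \in L^2$ gives $\sum_n \ee^{2t\lambda_n}|f_n|^2 < \infty$. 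Cauchy--Schwarz then bounds the series by a product whose second factor is locally uniformly finite on $\{\tfrac{\pi}{2}\Im\alpha < t\}$, because $\lambda_n^{2p}\ee^{\pi(\Im\alpha)\lambda_n} \leq C\ee^{2t\lambda_n}$ for $n$ large, yielding local uniform absolute convergence and hence holomorphy. The main delicate point throughout is bookkeeping for the spectral calculus of $Q_0$ acting on $\mathscr{S}'(\Bbb{R})$ and the legitimacy of moving fractional powers of $Q_0$ across the sesquilinear bracket; once this is in place, each of the three claims reduces to a direct application of Proposition \ref{prop_weak_continuity} combined with standard facts about the unitary group $\{\mathcal{F}^\alpha\}$.
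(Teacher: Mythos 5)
The paper leaves this corollary without an explicit proof, so the comparison is between your argument and what the author presumably had in mind as a routine consequence of Proposition \ref{prop_weak_continuity}; your approach is the natural one and it is essentially correct. The continuity and analyticity parts are solid: for continuity you rightly factor $g(\alpha) = \langle \mathcal{F}^\alpha Q_0^{-p}\comb_r, Q_0^p f\rangle$ with $p = \tfrac{1}{2}+\eps$ and invoke strong continuity of the unitary group $\{\mathcal{F}^\alpha\}$ on $L^2$, and for analyticity the Hermite-basis expansion plus the Cauchy--Schwarz bound $\lambda_n^{-p}|c_n| \cdot \lambda_n^p \ee^{\frac{\pi}{2}(\Im\alpha)\lambda_n}|f_n|$ with $\lambda_n^{2p}\ee^{\pi(\Im\alpha)\lambda_n} \lesssim \ee^{2t\lambda_n}$ gives locally uniform convergence on $\{\frac{\pi}{2}\Im\alpha < t\}$.

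The $C^1$ step is the one place where the bookkeeping is off. You say you ``keep $p = \tfrac{1}{2}+\eps$ and differentiate,'' but to differentiate $\alpha \mapsto \mathcal{F}^\alpha Q_0^{-p}\comb_r$ in $L^2$ via Stone's theorem you would need $Q_0^{-p}\comb_r$ in the domain of $Q_0$, i.e.\ $Q_0^{1-p}\comb_r = Q_0^{\frac{1}{2}-\eps}\comb_r \in L^2(\Bbb{R})$, which is precisely what Proposition \ref{prop_weak_continuity} does \emph{not} give (the exponent sits on the wrong side of the threshold $\tfrac{1}{2}$). The fix is small but should be said: start instead from $g(\alpha) = \langle \mathcal{F}^\alpha Q_0^{-p'}\comb_r, Q_0^{p'}f\rangle$ with $p' = \tfrac{3}{2}+\eps$, note that $Q_0^{-p'}\comb_r$ lies in $\operatorname{dom}(Q_0) \cap L^2$ since both $p'$ and $p'-1$ exceed $\tfrac{1}{2}$, differentiate there, and only afterwards commute one power of $Q_0$ to the right to recover your formula $g'(\alpha) = -\frac{\pi\ii}{2}\langle \mathcal{F}^\alpha Q_0^{-p}\comb_r, Q_0^{p+1}f\rangle$. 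Alternatively, and more uniformly, the Hermite expansion you write for the analyticity part proves all three assertions at once: the series $\sum_n \ee^{-\frac{\pi\ii\alpha}{2}\lambda_n} c_n \overline{f_n}$ and its termwise $\alpha$-derivative converge locally uniformly under the respective summability hypotheses, and this avoids any domain discussion. With that adjustment the argument is complete.
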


\subsection{The limit $\alpha \to 0$, Fresnel integrals, and Gauss sums}\label{ss_spirals}

Numerical computation quickly reveals that the antiderivative
\[
	\Pi_\alpha(X) = \int_0^X \mathcal{F}^\alpha \comb_1(x)\,\dd x
\]
defined in Definition \ref{def_antiderivative} does not, as $\alpha \to 0$ within $\{\alpha \::\: \tan \frac{\pi\alpha}{2} \in \Bbb{Q}\}$, tend locally uniformly to
\[
	\Pi_0(X) = \frac{1}{2}(\lfloor X \rfloor + \lceil X\rceil) = \begin{cases}X, & X \in \Bbb{Z}
	\\ \lfloor X \rfloor + \frac{1}{2}, & X \in \Bbb{R}\backslash \Bbb{Z}. \end{cases}
\]

Some further analysis reveals that the rescaled $\Pi_\alpha(\alpha^{1/2}X)$ resembles the Fresnel integral
\begin{equation}\label{eq_def_Fresnel}
	\ee^{-\frac{\pi\ii}{4}}S(X) = \ee^{-\frac{\pi\ii}{4}}\int_0^X \ee^{\pi\ii x^2}\,\dd x.
\end{equation}
Morever, when $\tan\frac{\pi\alpha}{2} = \frac{b}{a}$ with $a \gg b$ (relatively prime integers) and $b > 0$, the spiral traced by the values of $S(X)$ in the complex plane is made up of small repeating blocks which depend only on $b$ and on $a~(\opnm{mod} b)$. (See Figure \ref{fig_Euler_spiral}.) These blocks correspond to certain Gauss sums, and analyzing these motifs allows us to deduce an expression for the eighth root of unity $\mu(\Bff{M}; (q,p))$ in terms of these Gauss sums. We note that the case $b < 0$ is the complex conjugate of the case $b > 0$; see Remark \ref{rem_conj}.

Our goal is to establish this expression in the following form.

\begin{figure}
\centering
\includegraphics[width = \textwidth]{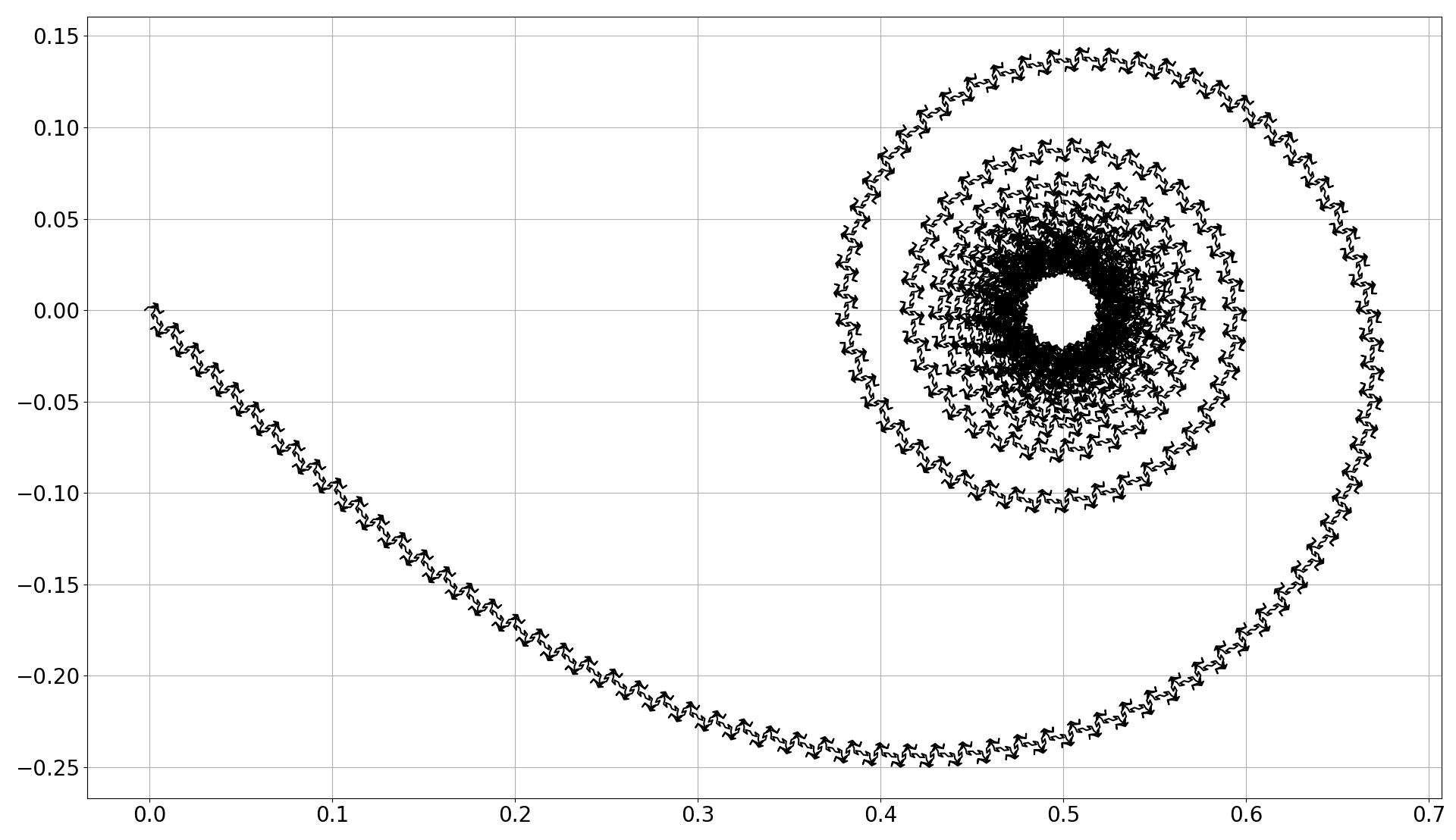}
\caption{Graph of values of $\Pi_{\alpha}(X)$ in the complex plane when $\tan \frac{\pi\alpha}{2} = \frac{38}{160001}$ and $X \in [0, 0.1]$.\label{fig_Euler_spiral}}
\end{figure}

\begin{theorem}\label{thm_Gauss_Sum}
Let $\Bff{M} = \begin{pmatrix} a & b \\ c & d \end{pmatrix} \in SL(2, \Bbb{Z})$ with $b > 0$, let $(q, p) \equiv (ab, cd)~(\opnm{mod} 2)$, and let $\mu(\Bff{M}; (q,p))$ be as in Theorem \ref{thm_id_metapl}. Then, when
\[
	E(n) = -\frac{d}{2b}(n-\frac{q}{2})^2 - \frac{1}{2}pn + \frac{1}{8}qp,
\]
we have the formula
\[
	\mu(\Bff{M}; (q,p)) = \frac{\ee^{-\frac{\pi\ii}{4}}}{\sqrt{b}}\sum_{n=0}^{b-1}\ee^{-2\pi\ii E(n)}.
\]
\end{theorem}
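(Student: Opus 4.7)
The plan is to compute $\mathcal{M}(\Bff{M})\comb_1$ directly from the integral kernel \eqref{eq_M_kernel} and to match it, coefficient by coefficient, against $\mu\,\shift_{(-q/2, -p/2)}\comb_1$, which by Theorem \ref{thm_id_metapl} is known to equal $\mathcal{M}(\Bff{M})\comb_1$. Applying the kernel to the sum of deltas gives
\[
\mathcal{M}(\Bff{M})\comb_1(x) = \frac{1}{\sqrt{\ii b}}\ee^{\pi\ii d x^2/b}\sum_{k\in\Bbb{Z}}\ee^{\pi\ii(ak^2 - 2xk)/b}.
\]
Writing $k = bn + \ell$ with $\ell \in \{0,\dots,b-1\}$ and $n \in \Bbb{Z}$, the sum over $n$ carries a factor $\ee^{\pi\ii abn^2}$; the key parity observation $n^2 \equiv n~(\opnm{mod}\,2)$ highlighted in the introduction converts this to $\ee^{\pi\ii abn}$, after which the Poisson summation formula \eqref{eq_Poisson_original} rewrites the $n$-sum as $\sum_k \delta(a\ell - x + ab/2 - k)$. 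This simultaneously confirms the discrete support on $\Bbb{Z} + ab/2 = \Bbb{Z} + q/2$ and shows that every $\ell \in \{0,\dots,b-1\}$ contributes to the coefficient at each such point.

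To put the resulting Gauss sum (which involves $a$) into the form stated in the theorem (which involves $d$), I would next substitute $\ell \mapsto d\ell$, permissible because $\gcd(d, b) = 1$ from $ad - bc = 1$. Using $ad^2 = d + bcd$ together with $\ell^2 \equiv \ell~(\opnm{mod}\,2)$, this substitution replaces $\pi\ii a\ell^2/b$ by $\pi\ii d\ell^2/b + \pi\ii cd\ell$, and the cross term becomes $-2\pi\ii dm\ell/b$. Completing the square $\ell^2 - 2m\ell = (\ell - m)^2 - m^2$ produces a term $-\pi\ii dm^2/b$ that exactly cancels the outer Gaussian $\ee^{\pi\ii dx^2/b}$ evaluated at $x = m$. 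The coefficient of $\delta(x - m)$ in $\mathcal{M}(\Bff{M})\comb_1$ thus reduces to
\[
\frac{1}{\sqrt{\ii b}}\sum_{\ell = 0}^{b-1}\ee^{\pi\ii d(\ell - m)^2/b + \pi\ii cd \ell}.
\]

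Matching this at $m = q/2$ against the coefficient $\mu\,\ee^{-\pi\ii pq/4 - \pi\ii pm}$ of $\delta(x - m)$ on the right-hand side, and identifying $\sqrt{\ii b} = \ee^{\pi\ii/4}\sqrt{b}$, yields the formula of the theorem once an apparent prefactor $\ee^{3\pi\ii pq/4}$ is collapsed to $\ee^{-\pi\ii pq/4}$. This last collapse relies on $pq \equiv abcd~(\opnm{mod}\,2)$ being even, which follows from $ad - bc = 1$ forcing exactly one of $ad$ and $bc$ to be even. This handles the case $(q, p) = (ab, cd)$; the general case $(q, p) \equiv (ab, cd)~(\opnm{mod}\,2)$ follows by applying \eqref{eq_mu_change_rep} and checking that the phase it introduces matches the change in $E(n)$ under even shifts of $q$ and $p$. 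The main technical obstacle is bookkeeping these phase factors cleanly, making sure each of the three applications of $\nu^2 \equiv \nu~(\opnm{mod}\,2)$ across the argument is placed at the right step.
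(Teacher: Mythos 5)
Your proposal is correct, but it proves the theorem by a genuinely different route than the paper. The paper's argument is asymptotic: it sends $\alpha_j\to 0$ along $\tan\frac{\pi\alpha_j}{2} = \frac{b}{a+jb}$, decomposes $\mathcal{F}^{\alpha_j}\comb_1$ (Proposition \ref{prop_asmall_decomp}) into $b$ Riemann-sum approximations to the Fresnel integral, tests against a slowly widening Gaussian $g_{(0,0),\tau}$ so that each piece converges to $\ee^{-\pi\ii/4}$ (Proposition \ref{prop_fres_riemm_on_Gaussian}), and compares with the direct computation of $\langle g_{(0,0),\tau},\mathcal{F}^{\alpha_j}\comb_1\rangle\to 1$ via Lemma \ref{lem_M_on_gaussian}. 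Your argument instead stays at a single $\Bff{M}\in SL(2,\Bbb{Z})$ and computes $\mathcal{M}(\Bff{M})\comb_1$ head-on from the kernel \eqref{eq_M_kernel}, peeling off the Gauss sum by splitting $k = bn+\ell$, exchanging the $a$-Gauss-sum for a $d$-Gauss-sum via the reindexing $\ell\mapsto d\ell\pmod b$ (valid because the summand is $b$-periodic once $x_0\in \frac{ab}{2}+\Bbb{Z}$), and matching the coefficient of $\delta(x-\frac{q}{2})$ on both sides of Theorem \ref{thm_id_metapl}. I have checked the phase bookkeeping: the three parity reductions you invoke ($n^2\equiv n$, $\ell^2\equiv\ell$, and $pq\equiv abcd\equiv 0\pmod 2$), the use of $ad^2 = d + bcd$, and the cancellation of $\ee^{\pm\pi\ii d x_0^2/b}$ after completing the square all come out correctly to the stated formula $\mu = \frac{\ee^{-\pi\ii/4}}{\sqrt{b}}\sum_{n=0}^{b-1}\ee^{-2\pi\ii E(n)}$.

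Two small remarks. First, the coefficient-matching as you set it up already carries the general $(q,p)\equiv(ab,cd)\pmod 2$ --- the identity $\ee^{\pi\ii cd\ell} = \ee^{\pi\ii p\ell}$ for $\ell\in\Bbb{Z}$ absorbs the difference --- so the appeal to \eqref{eq_mu_change_rep} at the end is not actually needed. Second, the place that would deserve the most care if you wrote this out fully is the distributional interchange: the formal expansion $\mathcal{M}(\Bff{M})\comb_1(x) = \frac{1}{\sqrt{\ii b}}\ee^{\pi\ii dx^2/b}\sum_k \ee^{\frac{\pi\ii}{b}(ak^2 - 2xk)}$ and the subsequent Poisson summation of the $n$-sum under multiplication by the smooth oscillatory prefactors have to be read as limits in $\mathscr{S}'(\Bbb{R})$ (e.g., by inserting a factor $\ee^{-\eps k^2}$ and letting $\eps\to 0^+$). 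This is standard but is exactly the sort of convergence issue the paper's proof sidesteps by never leaving the Gaussian-tested world. What your computation buys in exchange is a self-contained, finite, purely algebraic derivation of the Gauss sum that does not require the $\alpha_j\to 0$ limit at all; what the paper's version buys is the link to the Euler-spiral picture which is the point of that section.
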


\subsubsection{Decomposition of $\mathcal{F}^{\alpha_j}\comb_1$}

We study $\mathcal{F}^{\alpha}\comb_1$, when $\tan\frac{\pi\alpha}{2} = \frac{b}{a}$ with $a,b\in\Bbb{Z}$ are relatively prime, by studying $\mathcal{F}^{\alpha_j}\comb_1$ when $\tan\frac{\pi\alpha}{2} = \frac{b}{a+jb}$ and $j\to \infty$ in $\Bbb{N}$.

We therefore fix $c, d\in\Bbb{Z}$ with $ad-bc = 1$ and $(q, p) \equiv (ab, cd)~(\opnm{mod 2})$. We write $\mu = \mu\left(\begin{pmatrix} a & b \\ c & d\end{pmatrix}; (q, p)\right)$ as in Theorem \ref{thm_id_metapl}. Following Proposition \ref{prop_mu_transformations}, we let
\[
	\Bff{M}_j = \begin{pmatrix} a + jb & b \\ c + jd & d\end{pmatrix}, \quad (q_j, p_j) = (q-jb, p-jd)
\]
and we have
\[
	\mu_j := \mu(\Bff{M}_j; (q_j, p_j)) = \ee^{\frac{\pi\ii}{4}j(pb-dq)}\mu.
\]

The statement of Theorem \ref{thm_identify}, rephrased as a sum in Remark \ref{rem_sum_expression}, depends on 
\[
	t_j = (a+jb)(c+jd) + bd = j^2 bd + j(ad + bc) + t_0
\]
and $s_j > 0$ defined by
\[
	s_j^2 = (a+jb)^2 + b^2 = j^2b^2 + 2jab + s_0^2.
\]

The sum in Remark \ref{rem_sum_expression}, using Proposition \ref{prop_mu_transformations}, is
\begin{align*}
	\mathcal{F}^{\alpha_j} \comb_1(x) &= \mu_j s_j^{-1/2}\sum_{k\in\Bbb{Z}-\{\frac{q_j}{2}\}} \ee^{-\pi\ii \frac{t_j}{s_j^2}k^2 - \pi\ii p_j k - \frac{\pi\ii}{4} p_jq_j}\delta(x-\frac{k}{s_j})
	\\ &= \mu s_j^{-1/2}\sum_{k\in\Bbb{Z} - \{\frac{q_j}{2}\}} \ee^{-\pi\ii \frac{t_j}{s_j^2} k^2 - \pi\ii p_j k - \frac{\pi\ii}{4}q_j p_j + \frac{\pi\ii}{4}j(pb-dq)}\delta(x-\frac{k}{s_j})
	\\ &= \mu s_j^{-1/2}\sum_{k\in\Bbb{Z} - \{\frac{q_j}{2}\}} \ee^{\pi\ii(\Delta_0 k)^2 + 2\pi\ii E_0(k, j)}\delta(x-\frac{k}{s_j})
\end{align*}
when $\Delta_0 > 0$ is defined by removing the principal part, $-d/b$, of $-t_j/s_j^2$,
\[
	\Delta_0^2 = -\frac{t_j}{s_j^2} + \frac{d}{b},
\]
and $2\pi\ii E_0(k,j)$ is what remains in the exponent,
\begin{equation}\label{eq_def_E0}
	E_0(k,j) = -\frac{d}{2b}k^2 - \frac{1}{2}p_j k - \frac{1}{8}p_jq_j + \frac{1}{8}j(pb - dq).
\end{equation}
We compute that
\[
	\Delta_0^2 = \frac{ds_j^2 - bt_j}{bs_j^2} = \frac{2abdj + ds_0^2 - ((abd + b^2c)j + bt_0)}{bs_j^2} = \frac{bj + ds_0^2 - bt_0}{bs_j^2}.
\]

We will show in Lemma \ref{lem_E0_props} below that, when $k = n - \frac{q_j}{2}$ is a generic element of $\Bbb{Z} - \{\frac{q_j}{2}\}$, the function $E_0(n - \frac{q_j}{2},j)$ is independent of $j$ in $\Bbb{Z}$, is $b$-periodic in $n$ modulo $1$, and can be expressed as
\[
	E(n) = E_0(n-\frac{q_j}{2}, j) = -\frac{d}{2b}(n-\frac{q}{2})^2 - \frac{1}{2}pn + \frac{1}{8}qp.
\]
Therefore, $\ee^{2\pi\ii E(n+bm)} = \ee^{2\pi\ii E(n)}$ for all $n, m \in \Bbb{Z}$. In particular, we can express $k \in \Bbb{Z} - \{\frac{q_j}{2}\}$ in a unique way as
\[
	k = n + bm - \frac{q_j}{2}, \quad m \in \Bbb{Z}, n \in \{0, 1, \dots, b-1\},
\]
and when $m_0 \in [0, 1)$ is defined by $k \equiv b(m+m_0)~(\opnm{mod} b)$ or
\[
	m_0 = m_0(n, b, q, j) \equiv \frac{1}{b}(n - \frac{q_j}{2}) \quad (\opnm{mod} 1),
\]
we obtain
\[
	\mathcal{F}^{\alpha_j} \comb_1(x) = \mu s_j^{-1/2}\sum_{n = 0}^{b-1}\ee^{2\pi \ii E(n)}\sum_{m \in \Bbb{Z}} \ee^{\pi \ii (\Delta_0 b(m+m_0))^2}\delta(x - \frac{b}{s_j}(m+m_0)).
\]

It is convenient to introduce
\begin{equation}\label{eq_def_rD1}
	\begin{aligned}
	r &= \frac{b}{s_j} = \frac{b}{\sqrt{(a+jb)^2 + b^2}},
	\\ \Delta &= b\Delta_0 = \sqrt{\frac{b^2j + bds_0^2 - b^2 t_0}{b^2j^2 + 2abj + s_0^2}},
	\end{aligned}
\end{equation}
which are functions of $a, b, c, d,$ and $j$. We record the asymptotics that
\begin{align*}
	r &= \frac{1}{j}(1+\BigO(j^{-1})),
	\\ \Delta^2 & = \frac{1}{j}(1+\BigO(j^{-1})).
\end{align*}
We also introduce notation for the inner sum above, which as we will describe is essentially a discrete approximation to a Fresnel integral:
\begin{equation}\label{eq_def_Fres_Riemm}
	f(x; r, \Delta, m_0) = \sqrt{r}\sum_{m \in \Bbb{Z}}\ee^{\pi\ii(\Delta(m+m_0))^2}\delta(x-r(m+m_0)).
\end{equation}
We remark that we may write $b$ using shifts and metaplectic transformations as in \eqref{eq_def_shift} and Section \ref{ss_metapl}:
\begin{equation}\label{eq_def_Fres_Riemm_metapl}
	f(x; r, \Delta, m_0) = \mathcal{V}_r \mathcal{W}_{\Delta^2}\shift_{(-m_0, 0)}\comb_1(x).
\end{equation}

\begin{proposition}\label{prop_asmall_decomp}
Let $a, b, c, d \in \Bbb{Z}$ be such that $b > 0$ and $ad-bc = 1$. Also let $(q,p) \equiv (ab, cd)~(\opnm{mod} 2)$ and let $\mu = \mu\left(\begin{pmatrix} a & b \\ c & d\end{pmatrix}; (q,p)\right)$ as in Theorem \ref{thm_id_metapl}. For $j \in \Bbb{N}$, let $\alpha_j \in (0, \pi)$ be such that $\tan \frac{\pi\alpha_j}{2} = \frac{b}{a+jb}$. Let 
\[
	E(n) = -\frac{d}{2b}(n-\frac{q}{2})^2 - \frac{1}{2}pn + \frac{1}{8}qp,
\]
and let 
\[
	f(x; r, \Delta, m_0) = \sqrt{r}\sum_{m \in \Bbb{Z}}\ee^{\pi\ii(\Delta(m+m_0))^2}\delta(x-r(m+m_0)).
\]

Then, with $r = j^{-1}(1+\BigO(j^{-1}))$ and $\Delta = j^{-1/2}(1+\BigO(j^{-1}))$ defined as in \eqref{eq_def_rD1} and $m_0 \equiv \frac{1}{b}(n-\frac{1}{2}(q-jb))~(\opnm{mod} 1)$,
\[
	\mathcal{F}^{\alpha_j}\comb_1(x) = \mu\frac{1}{\sqrt{b}}\sum_{n=0}^{b-1}\ee^{2\pi\ii E(n)} f(x; r, \Delta, m_0).
\]
\end{proposition}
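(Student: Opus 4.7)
The plan is that most of the proof is already carried out in the discussion immediately preceding the proposition; the only new content is the algebraic lemma (referenced there as Lemma \ref{lem_E0_props}) that the phase $\ee^{2\pi\ii E_0(k,j)}$ depends only on $k$ modulo $b$ and, at $k=n-q_j/2$, takes the $j$-independent value $\ee^{2\pi\ii E(n)}$. I would organize the proof in three stages: apply Remark \ref{rem_sum_expression} and Proposition \ref{prop_mu_transformations} to express $\mathcal{F}^{\alpha_j}\comb_1$ as a single sum with explicit coefficients, reindex the sum by residues modulo $b$, and then verify the algebraic lemma.

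For the first stage, I apply Remark \ref{rem_sum_expression} with $\Bff{M}$ replaced by $\Bff{M}_j=\Bff{M}\Bff{W}_j$, i.e.\ entries $(a+jb,b,c+jd,d)$, and shift parameters $(q_j,p_j)=(q-jb,p-jd)$, so that $(q_j,p_j)\equiv(a_jb,c_jd)~(\operatorname{mod}2)$ holds automatically. The third identity of Proposition \ref{prop_mu_transformations} gives $\mu_j=\ee^{\frac{\pi\ii}{4}j(pb-dq)}\mu$. Splitting $-t_j/s_j^2=-d/b+\Delta_0^2$ with $\Delta_0^2=d/b-t_j/s_j^2$ as in the text isolates the ``Gaussian'' part $\pi\ii\Delta_0^2k^2$ and leaves the remaining factor $\ee^{2\pi\ii E_0(k,j)}$ with $E_0$ as in \eqref{eq_def_E0}. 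I then reindex $k=n-q_j/2+bm$ uniquely with $n\in\{0,\dots,b-1\}$ and $m\in\Bbb{Z}$, so that $k=b(m+m_0)$ with $m_0\equiv(n-q_j/2)/b~(\operatorname{mod}1)$, $x-k/s_j=x-r(m+m_0)$ for $r=b/s_j$, and the Gaussian exponent is $\pi\ii(\Delta(m+m_0))^2$ for $\Delta=b\Delta_0$. Matching the prefactor via $s_j^{-1/2}=\sqrt{r/b}$ turns the $m$-sum into $f(x;r,\Delta,m_0)/\sqrt{b}$.

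The main point is then the verification of the two properties of $E_0$. The first, $E_0(n-q_j/2,j)=E(n)$ for all $j\in\Bbb{Z}$, is a direct expansion: substituting $k=n-q/2+jb/2$ and $p_j=p-jd$ into
\[
E_0(k,j)=-\frac{d}{2b}k^2-\frac{1}{2}p_jk-\frac{1}{8}p_jq_j+\frac{1}{8}j(pb-dq),
\]
the $j^2b$ terms cancel in the combination $-\frac{d}{2b}\cdot\frac{j^2b^2}{4}+\frac{j^2bd}{4}-\frac{j^2bd}{8}=0$, the linear-in-$j$ terms involving $(n-q/2)$ cancel between the first two pieces, and the remaining linear-in-$j$ terms $-\frac{pjb}{4}+\frac{jpb}{8}+\frac{jdq}{8}+\frac{1}{8}j(pb-dq)$ collapse to zero. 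The second property, $E_0(k+b,j)-E_0(k,j)\in\Bbb{Z}$ whenever $k\in\Bbb{Z}-q_j/2$, reduces after the computation $-dk-\frac{b(d+p_j)}{2}$ to showing that $dq_j-b(d+p_j)$ is even. Using $(q,p)\equiv(ab,cd)~(\operatorname{mod}2)$ and, when $b$ is odd, the parity consequence of $ad-bc=1$ (if $d$ is even then $c$ is odd, hence $1+c$ is even; if $d$ is odd then $1-d$ is even), one checks in every parity case that $(1+c)b(1-d)\equiv 0~(\operatorname{mod}2)$, which is exactly what is needed.

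The hard part is precisely this parity check, since it is the only step that is not a formal manipulation and it is easy to slip up on the sign conventions; the rest of the proof is a bookkeeping exercise. Once both properties are established, the $n$-sum collects the phases $\ee^{2\pi\ii E(n)}$ and the $m$-sum yields $f(x;r,\Delta,m_0)$, giving the stated decomposition. The claimed asymptotics $r=j^{-1}(1+\BigO(j^{-1}))$ and $\Delta^2=j^{-1}(1+\BigO(j^{-1}))$ follow directly from expanding $s_j^2=b^2j^2+2abj+s_0^2$ and $\Delta^2=(b^2j+bds_0^2-b^2t_0)/s_j^2$ in \eqref{eq_def_rD1}.
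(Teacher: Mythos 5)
Your proposal is correct and follows essentially the same route the paper takes: apply Remark \ref{rem_sum_expression} with $\Bff{M}_j = \Bff{M}\Bff{W}_j$ and $(q_j,p_j)=(q-jb,p-jd)$, use the third identity of Proposition \ref{prop_mu_transformations} for $\mu_j$, split off the principal quadratic $-d/b$, reindex $k=n+bm-q_j/2$, and then prove the two properties of $E_0$ from \eqref{eq_def_E0} exactly as in Lemma \ref{lem_E0_props}. The only cosmetic difference is in the parity check at the end (you case-split on the parity of $b$ and $d$ and reduce to $(1+c)b(1-d)$ being even, while the paper case-splits on the parity of $q$ and $p$), but the two arguments are equivalent modulo the identity $ad-bc=1$.
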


\begin{lemma}\label{lem_E0_props}
The function
\[
	(n, j) \mapsto E_0(n - \frac{q-jb}{2}, j)
\]
with $E_0(k,j)$ defined in \eqref{eq_def_E0} is independent of $j$ and $b$-periodic as a function from $\Bbb{Z}$ to $\Bbb{R}/\Bbb{Z}$; it is given by the formula
\[
	E_0(n - \frac{q-jb}{2}, j) = -\frac{d}{2b}(n-\frac{q}{2})^2 - \frac{1}{2}pn + \frac{1}{8}qp.
\]
\end{lemma}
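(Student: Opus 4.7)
The plan is to prove both claims by straightforward algebraic expansion of the definition \eqref{eq_def_E0}. For the $j$-independence, I would substitute $k = n - q_j/2$ with $p_j = p - jd$ and $q_j = q - jb$, and set $\tilde n = n - q/2$ so that $n - q_j/2 = \tilde n + jb/2$. Expanding each of the four pieces of $E_0(\tilde n + jb/2, j)$ as a polynomial in $j$ and $\tilde n$, I expect the $j^2$ contributions from $-\frac{d}{2b}(\tilde n + jb/2)^2$, $-\frac{1}{2}(p-jd)(\tilde n + jb/2)$, and $-\frac{1}{8}(p-jd)(q-jb)$ to sum to zero, and similarly the $j^1$ contributions (both those linear in $\tilde n$ and those constant in $\tilde n$) to cancel once the correction $\frac{j}{8}(pb-dq)$ is added; this is the reason the otherwise mysterious ``$+\frac{1}{8}j(pb-dq)$'' sits in the definition of $E_0$. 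What remains is $-\frac{d}{2b}\tilde n^2 - \frac{p}{2}\tilde n - \frac{pq}{8}$; reverting $\tilde n = n - q/2$ then yields the claimed closed form for $E(n)$.

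For the $b$-periodicity modulo $\Bbb{Z}$, a short computation gives
\[
	E(n+b) - E(n) = -d(n - q/2) - \frac{bd}{2} - \frac{pb}{2} = -dn + \frac{dq - b(d+p)}{2}.
\]
Since $-dn \in \Bbb{Z}$, it suffices to show $dq - b(d+p) \equiv 0~(\opnm{mod}~2)$. Using the hypothesis $(q,p) \equiv (ab, cd)~(\opnm{mod}~2)$, this reduces to $bd(a - c - 1) \equiv 0~(\opnm{mod}~2)$, which I would verify in two cases: if $bd$ is even there is nothing to prove, and otherwise $b$ and $d$ are both odd, in which case the relation $ad - bc = 1$ forces $a \not\equiv c~(\opnm{mod}~2)$, so $a - c - 1$ is even.

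The argument is entirely elementary, so the main ``obstacle'' is really just careful bookkeeping of the cancellations in the first step. The one nontrivial ingredient is the parity hypothesis $(q,p) \equiv (ab, cd)~(\opnm{mod}~2)$: this hypothesis appeared naturally in Theorem \ref{thm_id_metapl} as the condition under which $\shift_{(q/2,p/2)}\mathcal{M}(\Bff{M})\comb_1$ is a multiple of $\comb_1$, and it is precisely what is needed here for $E$ to descend to a well-defined function $\Bbb{Z} \to \Bbb{R}/\Bbb{Z}$ of period $b$.
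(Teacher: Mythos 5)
Your proposal is correct. For the $j$-independence, the method is essentially the same as the paper's: expand $E_0(n-\frac{q_j}{2},j)$ in powers of $j$ and watch everything cancel. The paper organizes this slightly differently — it keeps $n$ rather than changing variables to $\tilde n = n-\frac{q}{2}$, observes that the $n$-coefficient collapses to the $j$-invariant combination $p_j - \frac{d}{b}q_j = p - \frac{d}{b}q$, and only then passes to the $(n-\frac{q}{2})$ form by a completing-the-square observation (phrased via Lemma \ref{lem_equiv_shifts}). Your change of variables at the outset gives the same cancellations in a more mechanical way, and I verified that the $j^2$ and $j^1$ contributions do vanish as you expect, leaving exactly $-\frac{d}{2b}\tilde n^2 - \frac{p}{2}\tilde n - \frac{pq}{8}$.

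Where you genuinely depart from the paper is the $b$-periodicity. The paper shows $dq - bd - pb$ is even by a three-way case split on the parities of $q$ and $p$, each invoking $ad - bc = 1$ separately. You instead substitute $q \equiv ab$, $p \equiv cd~(\mathrm{mod}~2)$ directly to reduce the claim to $bd(a-c-1) \equiv 0~(\mathrm{mod}~2)$, which collapses to only one nontrivial case ($bd$ odd, where $ad - bc \equiv a - c$ forces $a - c$ odd). This is cleaner and makes the role of the parity hypothesis more transparent; it's the sharper way to phrase that part of the argument.

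One cosmetic remark: in your sketch you describe the plan with ``I expect the $j^2$ contributions\ldots to sum to zero.'' For a finished proof you should carry out that expansion explicitly rather than leaving it as an expectation — the cancellations are easy but the whole point of this lemma is the bookkeeping, so they should be displayed.
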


\begin{proof}
We expand $E_0$ to obtain
\begin{align*}
	E_0(n - \frac{q_j}{2}, j) &= - \frac{d}{2b}(n-\frac{q_j}{2})^2 - \frac{1}{2} p_j(n-\frac{q_j}{2}) - \frac{1}{8}q_jp_j + \frac{1}{8}j(pb-dq)
	\\ &= -\frac{d}{2b}n^2 - \frac{1}{2}(p_j - \frac{d}{b}q_j)n + \frac{1}{8}q_j(p_j - \frac{d}{b}q_j) + \frac{1}{8}j(pb-dq).
\end{align*}
Since $(q_j, p_j) = (q-jb, p-jd)$,
\[
	p_j - \frac{d}{b}q_j = p - \frac{d}{b}q
\]
is $j$-invariant. (One may also recognize the form of a shift, in this case by $(\frac{q}{2}, -\frac{p}{2})$, being projected onto $\{(0, \zeta)\}_{\zeta \in \Bbb{C}}$ along the Lagrangian associated with $\ee^{-\pi\ii\frac{d}{b}x^2}$, as in Lemma \ref{lem_equiv_shifts}.) Therefore
\begin{align*}
	E_0(n - \frac{q_j}{2}, j) &= -\frac{d}{2b}n^2 - \frac{1}{2}(p - \frac{d}{b}q)n + \frac{1}{8}(q-jb)(p - \frac{d}{b}q) + \frac{1}{8}j(pb-dq)
	\\ &= -\frac{d}{2b}n^2 - \frac{1}{2}(p - \frac{d}{b}q)n + \frac{1}{8}q(p-\frac{d}{b}q).
\end{align*}
Recalling the Gaussian $g_{\Bff{v}, \tau}$ defined in \eqref{eq_def_gaussian}, we obtain, through Lemma \ref{lem_equiv_shifts} or through direct computation,
\begin{align*}
	\ee^{2\pi\ii E_0(n-\frac{q_j}{2}, j)} &= \ee^{-\pi\ii\frac{d}{b}n^2 - \pi\ii(p - \frac{d}{b}q)n + \frac{\pi\ii}{4}q(p-\frac{d}{b}q)}
	\\ &= \ee^{\frac{\pi\ii}{4}q(p-\frac{d}{b}q)}g_{(0, -\frac{1}{2}(p-\frac{d}{b}q)), -\frac{d}{b}}(n)
	\\ &= g_{(\frac{q}{2}, -\frac{p}{2}), -\frac{d}{b}}(n)
	\\ &= \ee^{-\pi\ii\frac{d}{b}(n-\frac{q}{2})^2 - \pi\ii p n + \frac{\pi\ii}{4}qp}.
\end{align*}

As for periodicity, we compute
\begin{align*}
	E(n+b)-E(n) &= -\frac{d}{2b}\left( (n+b-\frac{q}{2})^2 - (n-\frac{q}{2})^2\right) - \frac{1}{2}pb
	\\ &= -dn + \frac{1}{2}(dq-bd-pb).
\end{align*}
To show that $E(n+b)-E(n) \equiv 0~(\opnm{mod} 1)$ for $n\in\Bbb{Z}$, it suffices to show that $dq - db - pb$ is even. We recall that at least one of $q$ and $p$ is even, since $ad - bc = 1$ and $(q, p) \equiv (ab, cd)~(\opnm{mod} 2)$. If both $q$ and $p$ are even, then $bd$ cannot be odd because then both $a$ and $c$ are even, contradicting $ad - bc = 1$. If $q$ is odd then $p$ is even and $b$ is odd, so $d(q-b)$ is even. The corresponding reasoning --- if $p$ is odd then $q$ is even and $d$ is odd, so $b(d+p)$ is even --- completes the study of different cases. We conclude that $E(n)$ is $b$-periodic when viewed as a function from $\Bbb{Z}$ to $\Bbb{R}/\Bbb{Z}$.
\end{proof}

\subsubsection{Proof of Theorem \ref{thm_Gauss_Sum}}

We can analyze the ``Riemann sums'' $f(x; r, \Delta, m_0)$ either using standard results of numerical integration (to justify images like the one in Figure \ref{fig_Euler_spiral}) or by testing against a Gaussian. The latter approach turns out to be significantly simpler, which is to be expected because it fits naturally with the metaplectic representation. In both approaches, however, we obtain the same scale: the Fresnel integrals live on a scale $X \sim j^{-1/2}$, so to capture enough information we need to take $X \gg j^{-1/2}$. On the other hand, we surely cannot expect to take $X \gg 1$ or even $X \geq 1$, because the antiderivative of $\mathcal{F}^{\alpha_j}\comb_1$ weakly approaches $\Pi_0(X)$, the antiderivative of $\comb_1$, and $\Pi_0(X)$ jumps from $1/2$ on the interval $(0,1)$ to $3/2$ on the interval $(1, 2)$.

In terms of testing against a Gaussian $g_{(0,0),\tau}(x) = \ee^{\pi\ii \tau x^2}$, this forces us to choose $1 \ll \Im \tau$ and $|\tau| \ll j$, which we express in terms of $r = \Delta^2(1+o(1)) \to 0^+$ as follows.

\begin{proposition}\label{prop_fres_riemm_on_Gaussian}
Let $r, \Delta \in (0, \infty)$ be subject to the restriction $r = \Delta^2(1+o(1))$ as $\Delta \to 0^+$. Let $f$ be as in \eqref{eq_def_Fres_Riemm} and recall the Gaussian $g_{(0,0), \tau}(x) = \ee^{\pi\ii \tau x^2}$ from \eqref{eq_def_gaussian}. Then, for $\tau = \tau_1 + \ii \tau_2 \in \Bbb{C}$ obeying $|\tau| \ll \Delta^{-2}$ and $\tau_2 \to +\infty$ as $\Delta \to 0^+$, independently of $m_0 \in \Bbb{R}$,
\[
	\lim_{\Delta \to 0^+} \langle g_{(0,0), \tau}(x), f(x; r, \Delta, m_0)\rangle = \ee^{-\frac{\pi\ii}{4}}.
\]
\end{proposition}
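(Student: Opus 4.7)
The plan is to expand the pairing into a one-dimensional Gaussian lattice sum and then apply the Poisson summation formula, which converts it into a dual series whose limit behavior is transparent.

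First I would compute the pairing explicitly. Reading off the coefficients of the deltas that make up $f(x; r, \Delta, m_0)$ in \eqref{eq_def_Fres_Riemm} and pairing against $g_{(0,0),\tau}(x) = \ee^{\pi\ii\tau x^2}$ via the convention of Remark \ref{rem_unitary}, one obtains
\begin{equation*}
\langle g_{(0,0),\tau}, f(\,\cdot\,; r, \Delta, m_0)\rangle = \sqrt{r}\sum_{m \in \Bbb{Z}} \ee^{\pi\ii\sigma(m+m_0)^2}
\end{equation*}
for some $\sigma \in \Bbb{C}$ built from $\tau r^2$ and $\pm\Delta^2$ (the precise sign depending on the conjugation in the bracket, but immaterial for the argument). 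Under the hypotheses $r = \Delta^2(1+o(1))$, $|\tau|\Delta^2 \to 0$, and $\tau_2 \to +\infty$, a short computation shows that $\sigma$ approaches the negative real axis from the upper half plane: $\Re\sigma = -\Delta^2(1+o(1))$ because $|\tau_1| r^2 \ll \Delta^{-2}\cdot\Delta^4 = \Delta^2$, while $0 < \Im\sigma = \tau_2 r^2 = o(\Delta^2)$, yet the ratio $\Im\sigma/|\sigma|^2 \sim \tau_2 \to +\infty$.

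Next, since $\Im\sigma > 0$ the function $x \mapsto \ee^{\pi\ii\sigma(x+m_0)^2}$ is Schwartz, with Fourier transform $\xi \mapsto (-\ii\sigma)^{-1/2}\ee^{-\pi\ii\xi^2/\sigma + 2\pi\ii\xi m_0}$ (the square root taken with positive real part, which is admissible because $\Re(-\ii\sigma) = \Im\sigma > 0$). Poisson summation therefore yields
\begin{equation*}
\sqrt{r}\sum_{m\in\Bbb{Z}} \ee^{\pi\ii\sigma(m+m_0)^2} = \sqrt{\tfrac{r}{-\ii\sigma}}\sum_{n\in\Bbb{Z}} \ee^{-\pi\ii n^2/\sigma + 2\pi\ii n m_0}.
\end{equation*}
The prefactor tends to $\sqrt{\Delta^2/(\ii\Delta^2)} = \sqrt{-\ii} = \ee^{-\pi\ii/4}$; the $n=0$ term of the dual sum is exactly $1$; and for $n\neq 0$ the estimate
\begin{equation*}
|\ee^{-\pi\ii n^2/\sigma}| = \ee^{-\pi n^2\Im\sigma/|\sigma|^2} \leq \ee^{-c n^2 \tau_2}
\end{equation*}
(for some $c > 0$ and all $\Delta$ sufficiently small, with the bound independent of $m_0$) forces the tail to vanish as $\tau_2 \to +\infty$. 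Combining, the whole expression tends to $\ee^{-\pi\ii/4}$, uniformly in $m_0$.

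The main delicate point is precisely this uniformity in $m_0$, and the Poisson summation framework handles it for free, because $m_0$ appears only through the unimodular phases $\ee^{2\pi\ii n m_0}$ in the dual sum, which drop out of every modulus bound. An equivalent route, perhaps more evocative of the role of theta functions, uses the metaplectic form \eqref{eq_def_Fres_Riemm_metapl} and Remark \ref{rem_unitary} to rewrite the pairing as $r^{-1/2}\langle g_{(m_0, 0), \tilde\tau}, \comb_1\rangle$ for $\tilde\tau = \tau/r^2 - \Delta^2$, whose imaginary part $\Im\tilde\tau \sim \tau_2/\Delta^4 \to +\infty$ makes the Gaussian concentrate very narrowly around $x = m_0$; a single Gaussian integration then returns the same $\ee^{-\pi\ii/4}$.
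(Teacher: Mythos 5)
Your main argument is correct and is essentially the paper's own proof, just phrased more directly: the paper's route through $\mathcal{F}\comb_1 = \ee^{-\pi\ii/4}\comb_1$ and the formula $\mathcal{F}g_{(0,0),\zeta} = \zeta^{-1/2}g_{(0,0),-1/\zeta}$ is exactly the Poisson summation you invoke, with $\sigma$ playing the role of the paper's $\zeta = r^2\tau-\Delta^2$, and the prefactor limit $\sqrt{r}\,\zeta^{-1/2}\to -\ii$ matching your $\sqrt{r/(-\ii\sigma)}\to\ee^{-\pi\ii/4}$ after collecting the $\ee^{\pi\ii/4}$ from $\mathcal{F}\comb_1$.

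One caution on the aside you append at the end: the metaplectic rewriting actually gives $\sqrt{r}\,\langle g_{(m_0,0),\,r^2\tau-\Delta^2},\,\comb_1\rangle$, not $r^{-1/2}\langle g_{(m_0,0),\,\tau/r^2-\Delta^2},\,\comb_1\rangle$, because $\mathcal{V}_{1/r}^{-1}=\mathcal{V}_r$ sends $g_{(0,0),\tau}$ to $\sqrt{r}\,g_{(0,0),r^2\tau}$. With the correct parameter $\zeta = r^2\tau-\Delta^2$ one has $\Im\zeta = r^2\tau_2 \ll \Delta^2 \to 0$, so the Gaussian does \emph{not} concentrate; one genuinely must pass through a Fourier transform (equivalently, Poisson summation) to trade $\zeta$ for $-1/\zeta$, whose imaginary part does blow up. So the ``single Gaussian integration'' shortcut does not work, but since your main argument already contains the Poisson step, the proof stands.
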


\begin{proof}
We use the metaplectic presentation in \eqref{eq_def_Fres_Riemm_metapl}, as well as unitarity of metaplectic transformations (Remark \ref{rem_unitary}) and the Poisson summation formula. Letting 
\[
	\zeta = r^2 \tau - \Delta^2,
\]
we compute
\begin{align*}
	\langle g_{(0,0), \tau}(x), f(x; r, \Delta, m_0)\rangle &= \langle g_{(0,0), \tau}, \mathcal{V}_{1/r} \mathcal{W}_{\Delta^2} \shift_{(-m_0, 0)} \comb_1\rangle
	\\ &= \langle \shift_{(m_0, 0)} \mathcal{W}_{-\Delta^2}\mathcal{V}_r g_{(0,0), \tau}, \comb_1\rangle
	\\ &= \langle \sqrt{r}\shift_{(m_0, 0)} g_{(0,0), \zeta}, \comb_1\rangle
	\\ &= \sqrt{r}\langle \mathcal{F}\shift_{(m_0, 0)} g_{(0,0), \zeta}, \mathcal{F} \comb_1\rangle
	\\ &= \sqrt{r}\langle \shift_{(0, -m_0)}\mathcal{F}g_{(0,0), \zeta}, \ee^{-\frac{\pi\ii}{4}}\comb_1\rangle.
\end{align*}

The Fourier transform of $g_{(0,0), \zeta}$ for $\Im \zeta > 0$ is $\mathcal{F}g_{(0,0), \zeta} = \zeta^{-1/2}g_{(0,0), -1/\zeta}$; since we choose $\Re \zeta^{-1/2} > 0$, we have $\arg \zeta^{-1/2} \in (-\pi/2, 0)$. We compute
\[
	-\frac{1}{\zeta} = -\frac{1}{r^2(\tau_1 + \ii \tau_2) - \Delta^2} = \frac{1/r}{\Delta^2/r - r\tau_1 - \ii r \tau_2} = \frac{(\Delta/r)^2 - \tau_1 + \ii \tau_2}{(\Delta^2/r - r\tau_1)^2 + (r\tau_2)^2}.
\]
Under our hypotheses, when $\zeta = r^2 \tau - \Delta^2$, then $\Im(-1/\zeta) \to \infty$. Furthermore, $\zeta/r = r\tau - \Delta^2/r \to -1$ as $\Delta \to 0^+$, so because $\arg \zeta^{-1/2} \in (-\pi/2, 0)$, $\sqrt{r}\zeta^{-1/2} \to -\ii$ as $\Delta \to 0^+$. Therefore
\begin{align*}
	\langle g_{(0, 0), \tau}(x), f(x; r, \Delta, m_0)\rangle &= \ee^{\frac{\pi\ii}{4}}\sqrt{r}\zeta^{-1/2}\langle \shift_{(0, -m_0)}g_{(0,0), -1/\zeta}, \comb_1\rangle
	\\ &= \ee^{\frac{\pi\ii}{4}}\sqrt{r}\zeta^{-1/2}\sum_{m \in\Bbb{Z}} \ee^{-2\pi\ii m_0 m + \pi\ii (-\frac{1}{\zeta})m^2}
	\\ & \to \ee^{\frac{\pi\ii}{4}}(-\ii) = \ee^{-\frac{\pi\ii}{4}},\quad \textnormal{as }\Delta \to 0^+,
\end{align*}
because, as $\Im(-1/\zeta) \to \infty$, the terms in the sum where $m \neq 0$ are superexponentially small.
\end{proof}

This allows us to prove Theorem \ref{thm_Gauss_Sum}.

\begin{proof}[Proof of Theorem \ref{thm_Gauss_Sum}] Let $r$ and $\Delta$ be as in \eqref{eq_def_rD1} and let $\tau = \tau(j)$ be such that $\Im \tau \to \infty$ yet $|\tau| \ll \Delta^{-2} = j(1+\BigO(j^{-1}))$. (If one wishes to be concrete, let $\tau = \ii \sqrt{j}$, so that $g_{(0,0), \tau}$ ``lives on'' $|x| \lesssim j^{-1/4}$.) By Propositions \ref{prop_asmall_decomp} and \ref{prop_fres_riemm_on_Gaussian},
\begin{align*}
	\langle g_{(0,0), \tau}, \mathcal{F}^{\alpha_j}\comb_1\rangle &= \bar{\mu}\frac{1}{\sqrt{b}}\sum_{n=0}^{b-1}\ee^{-2\pi\ii E(n)}\langle g_{(0,0), \tau}, f(x; r, \Delta, m_0)
	\\ &\to \bar{\mu}\frac{\ee^{-\frac{\pi\ii}{4}}}{\sqrt{b}}\sum_{n=0}^{b-1}\ee^{-2\pi\ii E(n)}, \quad \textnormal{as }j \to \infty.
\end{align*}
On the other hand, since 
\[
	\ee^{\frac{\pi\ii}{2}\alpha_j} = \frac{1}{s_j}(a+jb + \ii b),
\]
one obtains using Lemma \ref{lem_M_on_gaussian} that
\[
	\mathcal{F}^{-\alpha_j}g_{(0,0), \tau} = \left(\frac{a+jb}{s_j} - \frac{b \tau}{s_j}\right)^{-1/2}g_{(0,0), \tau'}, \quad \tau' = \frac{b+(a+jb)\tau}{a+jb + b\tau}.
\]
Since $|\tau| \ll j$ and $s_j \sim jb$, $\frac{a+jb}{s_j} - \frac{b\tau}{s_j} = 1+o(1)$ and $\tau' = \tau(1+o(1))$ as $j \to \infty$. Having also assumed that $\Im \tau \to \infty$,
\begin{align*}
	\langle g_{(0,0), \tau}, \mathcal{F}^{\alpha_j} \comb_1\rangle &= \langle \mathcal{F}^{-\alpha_j}g_{(0,0),\tau}, \comb_1\rangle
	\\ &= \left(\frac{a+jb}{s_j} - \frac{b \tau}{s_j}\right)^{-1/2}\sum_{k \in \Bbb{Z}}\ee^{\pi\ii \tau' k^2}
	\\ & \to 1, \quad \textnormal{as }j \to \infty.
\end{align*}

This shows that
\[
	\bar{\mu}\frac{\ee^{-\frac{\pi\ii}{4}}}{\sqrt{b}}\sum_{n=0}^{b-1}\ee^{-2\pi\ii E(n)} = 1.
\]
Since $\bar{\mu} = \mu^{-1}$, this completes the proof of the theorem.
\end{proof}

\subsubsection{Riemann sums for the Fresnel integral}

The antiderivative of $f$ defined in \ref{eq_def_Fres_Riemm} is
\begin{equation}\label{eq_Fresnel_Riemann_antiderivative}
	F(X; \Delta, r, m_0) = \sqrt{r}\sum_{-m_0 \leq m \leq \frac{1}{r}X-m_0} \ee^{\pi\ii(\Delta(m+m_0))^2}.
\end{equation}
We may take $m_0 \in [0,1)$ without loss of generality. (Strictly speaking, to match the antiderivative $\Pi_{\alpha_j}(X)$ defined in Definition \ref{def_antiderivative} we should take the mean of the sum over $-m_0 \leq m \leq \frac{1}{r}X-m_0$ and the sum over $-m_0 < m < \frac{1}{r}X - m_0$, but the error is bounded by $\Delta$.)

Fix an order $N \in \Bbb{N}$ as well as $\Delta > 0$, $m_0 \in [0, 1)$, and a number of intervals $M \in \Bbb{N}$. Let $I_m = [\Delta m, \Delta (m+1)]$ and $I = [-\Delta N, \Delta(M+N)]$. A straightforward consequence of the Newton-Cotes formulas is that there exists a $C = C(N)$ such that, for any $g:I \to \Bbb{R}$ which is $N$ times continuously differentiable,
\begin{multline*}
	\left|\sum_{m=0}^{M-1} \Delta g(\Delta(m+m_0)) - \int_0^{M\Delta}g(y)\,\dd y\right| 
	\\ \leq C_N\left(\Delta \|g\|_{L^\infty(I)} + \Delta^{N+1}\sum_{m=0}^{M-1}\|g^{(N)}\|_{L^\infty(I_m)}\right).
\end{multline*}

We apply this to $g(y) = \ee^{\pi\ii y^2}$, where $\|g\|_{L^\infty} = 1$ and the derivative is bounded by $\|g^{(N)}\|_{L^\infty(I_m)} \leq C_N(1+(\Delta m)^N)$. The rounding error where $\frac{1}{r}X - m_0$ is absorbed in $C_N\Delta$, and we obtain for $\Delta \in (0,1]$, $m_0\in [0, 1)$
\begin{multline*}
	\left|\frac{\Delta}{\sqrt{r}} F(X; \Delta, r, m_0) - \int_0^{\frac{\Delta}{r}X}\ee^{\pi\ii y^2}\,\dd y\right|
	\leq C_N\left(\Delta + \Delta^{N+1}\sum_{m=0}^{\frac{1}{r}X-m_0} (1 + (\Delta m)^N)\right)
	\\  \leq C_N\left(\Delta + \Delta^{N+1}\frac{X}{r} + \Delta^{2N+1}\left(\frac{X}{r}\right)^{N+1}\right)
	\\ \leq C_N\left(\Delta + \Delta^{N+1}\frac{X}{r} +  \left(\frac{\Delta^2}{r}\right)^{N+1}\frac{1}{\Delta}X^{N+1}\right).
\end{multline*}

Our application will be to $\Delta = j^{-1/2}(1+\BigO(j^{-1}))$ and $r = j^{-1}(1+\BigO(j^{-1}))$. In this case, a simple consequence of the estimate above is the convergence of $\frac{\Delta}{\sqrt{r}}F(X; \Delta, r, m_0)$ to a Fresnel integral $\int_0^Y \ee^{\pi \ii y^2}\,\dd y$ when $Y = \frac{\Delta}{r}X$ and $X \in [0, \Delta^{\frac{1}{N}}]$, uniformly in the following sense.

\begin{proposition}\label{prop_Fres_Riemann}
Let $F(X; \Delta, r, m_0)$ be as in \eqref{eq_Fresnel_Riemann_antiderivative} and fix any $N \in \Bbb{N}^*$. If $r = \Delta^2(1+o(1))$ as $\Delta \to 0^+$ then
\[
	\lim_{\Delta \to 0^+}\sup_{X \in [0, \Delta^{1/N}], m_0 \in [0, 1)}\left|\frac{\Delta}{\sqrt{r}}F(X; \Delta, r, m_0) - \int_0^{\frac{\Delta}{r}X} \ee^{\pi\ii y^2}\,\dd y\right| = 0.
\]
\end{proposition}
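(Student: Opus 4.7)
The plan is essentially to apply the Newton--Cotes bound displayed in the paragraph immediately preceding the proposition, namely
\[
	\left|\frac{\Delta}{\sqrt{r}} F(X; \Delta, r, m_0) - \int_0^{\frac{\Delta}{r}X}\ee^{\pi\ii y^2}\,\dd y\right| \leq C_N\left(\Delta + \Delta^{N+1}\frac{X}{r} + \left(\frac{\Delta^2}{r}\right)^{N+1}\frac{X^{N+1}}{\Delta}\right),
\]
and then verify that under the hypotheses each of the three right-hand terms vanishes in the limit. The bound is already uniform in $m_0 \in [0,1)$ because $m_0$ does not appear on the right-hand side, so only uniformity in $X \in [0,\Delta^{1/N}]$ remains to be established.

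To carry this out I would write $r = \Delta^2(1+\eta(\Delta))$ with $\eta(\Delta)\to 0$, so that $\Delta^2/r$ is bounded and $\Delta/r$ is comparable to $\Delta^{-1}$. Each term on the right is monotone nondecreasing in $X$ on $[0,\Delta^{1/N}]$, so it suffices to substitute $X = \Delta^{1/N}$. The first term is $C_N\Delta$; the second is controlled by a constant multiple of $\Delta^{N+1}\cdot \Delta^{1/N}\cdot \Delta^{-2} = \Delta^{N-1+1/N}$; and the third is controlled by a constant multiple of $\Delta^{-1}\cdot (\Delta^{1/N})^{N+1} = \Delta^{1/N}$. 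For every $N \geq 1$ all three exponents on $\Delta$ are strictly positive, so the bound tends to $0$ uniformly as $\Delta \to 0^+$. Taking the supremum over $X \in [0, \Delta^{1/N}]$ and $m_0 \in [0, 1)$ then yields the proposition.

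I do not expect any real obstacle. The only genuine content lies in the preceding derivation of the Newton--Cotes bound itself, which is a standard composite-quadrature error estimate applied to the smooth function $g(y) = \ee^{\pi\ii y^2}$ on $[0, (\Delta/r)X]$, plus an $\BigO(\Delta)$ rounding correction at the endpoint since $(X/r) - m_0$ is generically not an integer. The choice of window $X \leq \Delta^{1/N}$ is tuned precisely so that the largest error term, which after $\Delta^2/r \sim 1$ is of the size $\Delta^{-1} X^{N+1}$, is driven to zero; if one were willing to fix $N$ and accept a smaller window, one could even quantify the rate of convergence to the Fresnel integral.
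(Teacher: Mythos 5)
Your proof is correct and follows essentially the same route as the paper: apply the Newton--Cotes error bound from the preceding paragraph, substitute $X = \Delta^{1/N}$, use $\Delta^2/r \to 1$, and observe that each of the three terms carries a strictly positive power of $\Delta$ (namely $1$, $N-1+\tfrac{1}{N}$, and $\tfrac{1}{N}$). The remarks about uniformity in $m_0$ and monotonicity in $X$ are implicit in the paper but make the argument slightly more explicit.
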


\begin{proof}
Under these hypotheses,
\begin{multline*}
	\left|\frac{\Delta}{\sqrt{r}}F(X; \Delta, r, m_0) - \int_0^{\frac{\Delta}{r}X} \ee^{\pi\ii y^2}\,\dd y\right| 
	\\ \leq C_N\left(\Delta + \frac{\Delta^2}{r}\Delta^{N-1}\Delta^\frac{1}{N} + \left(\frac{\Delta^2}{r}\right)^{N+1}\Delta^{\frac{N+1}{N}-1}\right)
\end{multline*}
which tends to zero as $\Delta\to 0$ and $\Delta^2/r\to 1$.
\end{proof}

\begin{corollary}
Fix $a, b$ relatively prime integers with $b > 0$ and let $\alpha_j \in (0, \pi)$ be such that $\cot\frac{\pi\alpha_j}{2} = \frac{a+jb}{b}$. For any $N \in \Bbb{N}$, the antiderivative $\Pi_\alpha(X)$ (Definition \ref{def_antiderivative}) converges to the rescaled Fresnel integral 
\[
	\ee^{-\frac{\pi\ii}{4}}S(j^{1/2}X), \quad S(X) = \int_0^X \ee^{\pi\ii x^2}\,\dd x
\]
uniformly on $[0, j^{-\frac{1}{2N}}]$ in the sense that
\[
	\lim_{j\to\infty} \sup_{X \in [0, j^{-1/(2N)}]} \left|\Pi_{\alpha_j}(X) - \ee^{-\frac{\pi\ii}{4}}S(j^{1/2}X)\right| = 0.
\]
\end{corollary}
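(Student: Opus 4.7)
The plan is to integrate the decomposition in Proposition \ref{prop_asmall_decomp} termwise and then invoke Proposition \ref{prop_Fres_Riemann} and Theorem \ref{thm_Gauss_Sum} to identify the limit. Concretely, taking the antiderivative of
\[
    \mathcal{F}^{\alpha_j}\comb_1(x) = \mu\,\frac{1}{\sqrt{b}}\sum_{n=0}^{b-1} \ee^{2\pi\ii E(n)} f(x; r, \Delta, m_0(n,j))
\]
from $0$ to $X$ yields, up to an $\BigO(\sqrt{r}) = \BigO(j^{-1/2})$ error coming from the symmetric midpoint convention in Definition \ref{def_antiderivative} at the two endpoints,
\[
    \Pi_{\alpha_j}(X) = \mu\,\frac{1}{\sqrt{b}}\sum_{n=0}^{b-1} \ee^{2\pi\ii E(n)}\, F(X; r, \Delta, m_0(n,j)) + \BigO(j^{-1/2}).
\]

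Next, recall from \eqref{eq_def_rD1} that $r = j^{-1}(1+\BigO(j^{-1}))$ and $\Delta = j^{-1/2}(1+\BigO(j^{-1}))$, so in particular $r = \Delta^2(1+o(1))$, and for $j$ sufficiently large $j^{-1/(2N)} \le \Delta^{1/N}$. Thus Proposition \ref{prop_Fres_Riemann}, applied uniformly in $m_0 \in [0,1)$, gives
\[
    \sup_{X \in [0, j^{-1/(2N)}]}\left|\tfrac{\Delta}{\sqrt{r}} F(X; r, \Delta, m_0) - S\!\left(\tfrac{\Delta}{r}X\right)\right| \longrightarrow 0.
\]
Since $\Delta/r = j^{1/2}(1+\BigO(j^{-1}))$ and $S$ is $1$-Lipschitz (because $|S'(X)| = 1$), we may replace $S(\Delta X/r)$ by $S(j^{1/2}X)$ at the cost of $\BigO(j^{1/2}X\cdot j^{-1}) = \BigO(j^{-1/2-1/(2N)})$ uniformly on $X \in [0, j^{-1/(2N)}]$. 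Furthermore, $\sqrt{r}/\Delta = 1 + \BigO(j^{-1})$, and $S$ is globally bounded on $[0,\infty)$ by the convergence of the classical Fresnel integral, so rescaling the prefactor introduces only a multiplicative $(1+o(1))$ error.

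Finally, Theorem \ref{thm_Gauss_Sum} identifies the coefficient: writing that theorem as $\sum_{n=0}^{b-1}\ee^{-2\pi\ii E(n)} = \sqrt{b}\,\mu\,\ee^{\pi\ii/4}$ and taking complex conjugates,
\[
    \mu \cdot \frac{1}{\sqrt{b}}\sum_{n=0}^{b-1}\ee^{2\pi\ii E(n)} = \mu\bar{\mu}\,\ee^{-\pi\ii/4} = \ee^{-\pi\ii/4},
\]
where $\mu\bar\mu = 1$ by Theorem \ref{thm_mu_8}. Combining the three steps gives
\[
    \sup_{X \in [0, j^{-1/(2N)}]}\left|\Pi_{\alpha_j}(X) - \ee^{-\pi\ii/4}\,S(j^{1/2}X)\right| \longrightarrow 0
\]
as $j \to \infty$, as claimed.

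The step requiring most attention is the passage from $S(\Delta X/r)$ to $S(j^{1/2}X)$: naively the argument of $S$ grows without bound on the interval $[0, j^{-1/(2N)}]$, and the relative error in $\Delta/r$ is only $\BigO(j^{-1})$, so one must exploit both the Lipschitz bound on $S$ and the fact that $X$ itself shrinks to balance the growth of $j^{1/2}$. All other errors are polynomially small in $j$ and pose no obstruction.
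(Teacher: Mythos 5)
Your proof is correct and follows essentially the same route as the paper's: integrate the decomposition of Proposition \ref{prop_asmall_decomp} termwise, apply Proposition \ref{prop_Fres_Riemann} to each $F(X;r,\Delta,m_0)$, and identify the coefficient $\ee^{-\pi\ii/4}$ via Theorem \ref{thm_Gauss_Sum} and $|\mu|=1$. Where you differ is only in bookkeeping: the paper absorbs the replacement of $S(\tfrac{\Delta}{r}X)$ by $S(j^{1/2}X)$ and the prefactor $\tfrac{\sqrt{r}}{\Delta}$ into a single ``$(1+o(1))S(j^{1/2}X)+o(1)$'' without comment, whereas you make the Lipschitz estimate $|S(\tfrac{\Delta}{r}X)-S(j^{1/2}X)|=\BigO(j^{-1/2}X)$ explicit, which is a genuine (if small) gap in the paper's exposition that your write-up fills in cleanly.
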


\begin{proof}
By Propositions \ref{prop_asmall_decomp} and \ref{prop_Fres_Riemann} (recall that $\Delta^2 = j^{-1}(1+\BigO(j^{-1})$ and $r = j^{-1}(1+\BigO(j^{-1})$ as well), as $j \to \infty$
\[
	\Pi_{\alpha_j}(X) = \mu\frac{1}{\sqrt{b}}\sum_{n=0}^{b-1} \ee^{2\pi\ii E(n)}\left((1+o(1))S(j^{1/2}X) + o(1)\right).
\]
Recall \cite[7.3.20]{Abramowitz_Stegun_1964} that $S(X)$ is bounded on $\Bbb{R}$ because $\lim_{X \to \infty}S(X) = \ee^{\frac{\pi\ii}{4}}$. By Theorem \ref{thm_Gauss_Sum},
\[
	\ee^{\frac{\pi\ii}{4}} = \frac{1}{\mu\sqrt{b}}\sum_{n=0}^{b-1}\ee^{-2\pi\ii E(n)}.
\]
Taking complex conjugates and recalling that $\mu = \bar{\mu}^{-1}$ proves the corollary.
\end{proof}

\begin{remark}
In particular, because
\[
	\Re S(\frac{\sqrt{3}}{2}) \approx 0.670, \quad \Im S(\frac{1}{2}) \approx -0.244,
\]
we do not have uniform convergence of $\Pi_\alpha(X)$ to $\Pi_0(X)$ in any neighborhood of zero. We do, however, have ``uniform convergence'' to $\Pi_0(X) = \frac{1}{2}$ in the obvious sense on intervals $[j^{\frac{1}{2} + \eps}, j^{-\eps}]$ for $\eps > 0$ fixed.
\end{remark}

\begin{remark}
Uniform estimates as $\alpha \to 0$ (instead of for $\alpha_j$ as $j \to \infty$), estimates for $X$ in larger sets, or statements of pointwise convergence would certainly be interesting, and seem numerically to be plausible. The author does not currently have any results in these directions.
\end{remark}

\section{Numerics around approximations to irrational cotangents}\label{s_irrational}

The list of things that the author does not know about $\{\mathcal{F}^\alpha \comb_1\}_{\alpha\in\Bbb{R}}$ is enormous. Here, we focus on the question of what happens when $\cot\frac{\pi\alpha}{2}$ is irrational. Indeed, this work fails to answer the second most natural question about $\mathcal{F}^\alpha \comb_1$: having identified $\mathcal{F}^{1/2}\comb_1$, what is $\mathcal{F}^{1/3}\comb_1$?

We have already seen (Figures \ref{fig_intro} and \ref{fig_intro_cty}, Section \ref{s_continuity}) that the antiderivative $\Pi_\alpha(X)$ appears to better show continuity in $\alpha$. It is straightforward to compute $\Pi_{\alpha_j}(X)$ when $\cot \frac{\pi\alpha_j}{2}$ is a (continued fraction) approximation to $\cot \frac{\pi}{6} = \sqrt{3}$. A very striking way to consider these approximations, presented in Figure \ref{fig_traces_sqrt3}, is via the values of the antiderivative $\Pi_{\alpha_j}$ in the complex plane. This flattens the depth $X$, so we use varying colors (oscillating between yellow and dark blue) to indicate this change. In particular, this also helps to see where the path re-crosses itself (frequently), giving information which would be lost with a monochrome curve. To try to clarify this approach, we include in Figure \ref{fig_traces_ReIm} the real and imaginary parts of $\Pi_\alpha(X)$ for an approximation $\cot \frac{\pi\alpha}{2} \approx \sqrt{3}$.

\begin{figure}
\centering
\includegraphics[width = .3\textwidth]{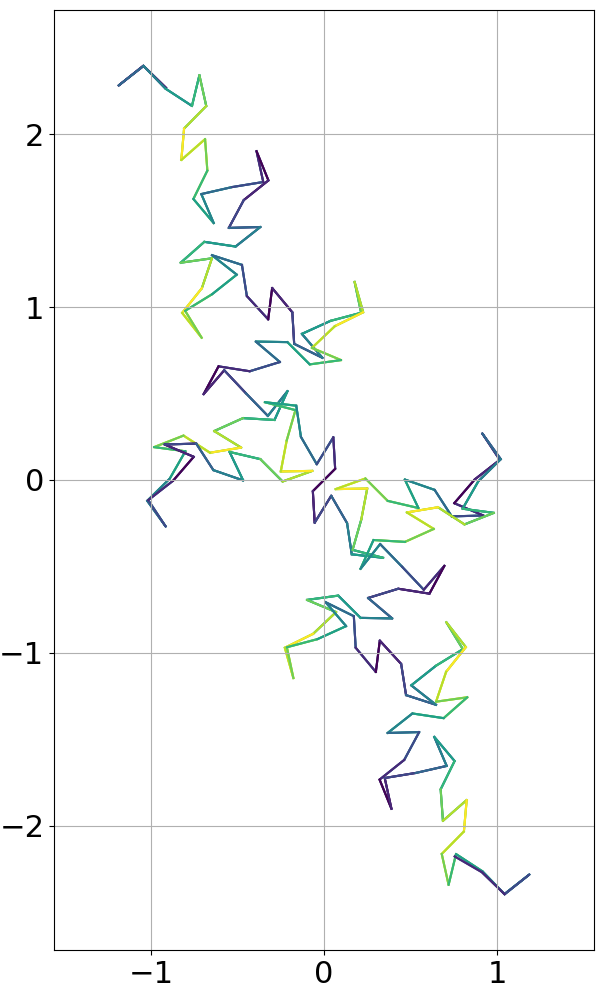}
\includegraphics[width = .3\textwidth]{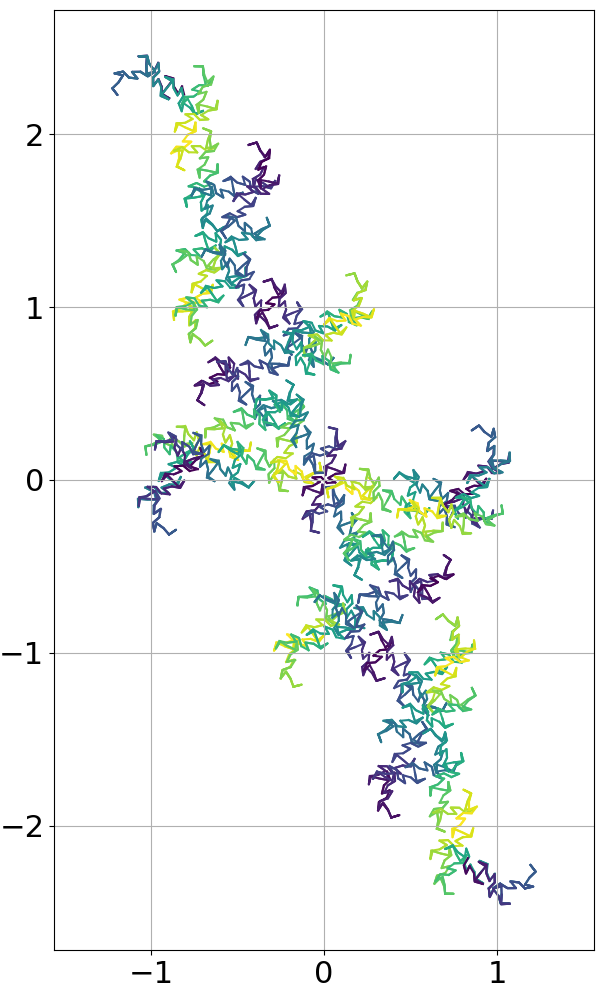}
\includegraphics[width = .3\textwidth]{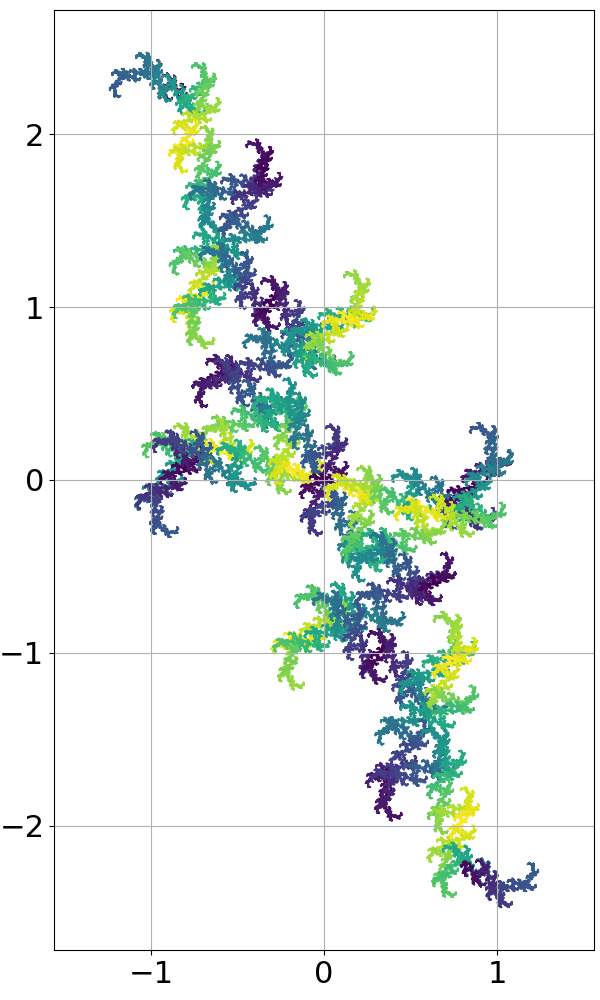}
\caption{Values of $\Pi_\alpha(X)$ in the complex plane for $X \in [-3, 3]$ and, from left to right, $\cot \frac{\pi\alpha}{2} = \frac{26}{15}, \frac{362}{209}, \frac{5042}{2911} \approx \sqrt{3}.$ Colors indicate varying $X$.\label{fig_traces_sqrt3}}
\end{figure}

\begin{figure}
\centering
\includegraphics[width = \textwidth]{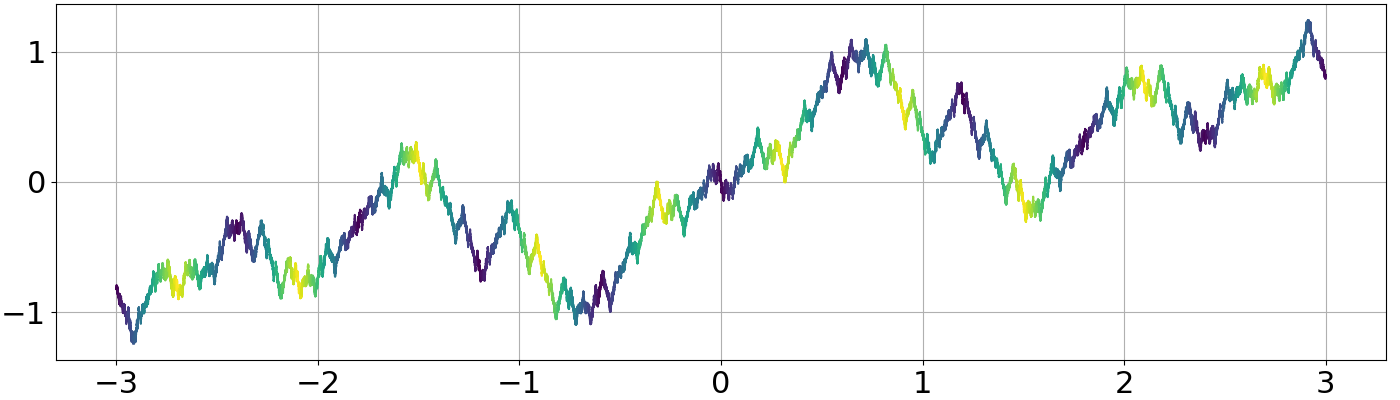}
\includegraphics[width = \textwidth]{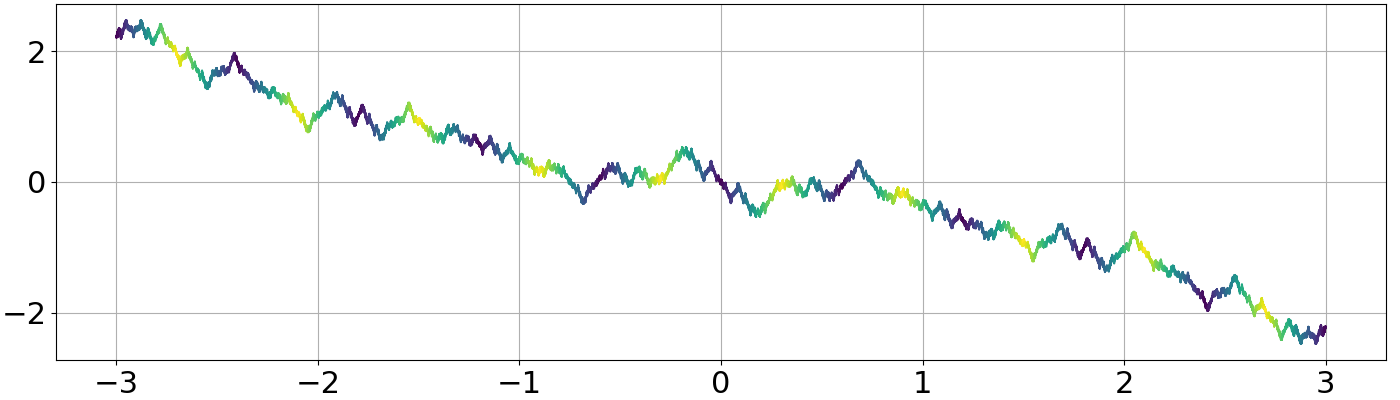}
\caption{Values of $\Re \Pi_\alpha(X)$ (top) and $\Im \Pi_\alpha(X)$ (bottom) as functions of $X$ for $\cot \frac{\pi\alpha}{2} = \frac{18817}{10864} \approx \sqrt{3}$, with colors matching those in Figure \ref{fig_traces_sqrt3}.\label{fig_traces_ReIm}}
\end{figure}

\begin{figure}
\centering
\includegraphics[width = .3\textwidth]{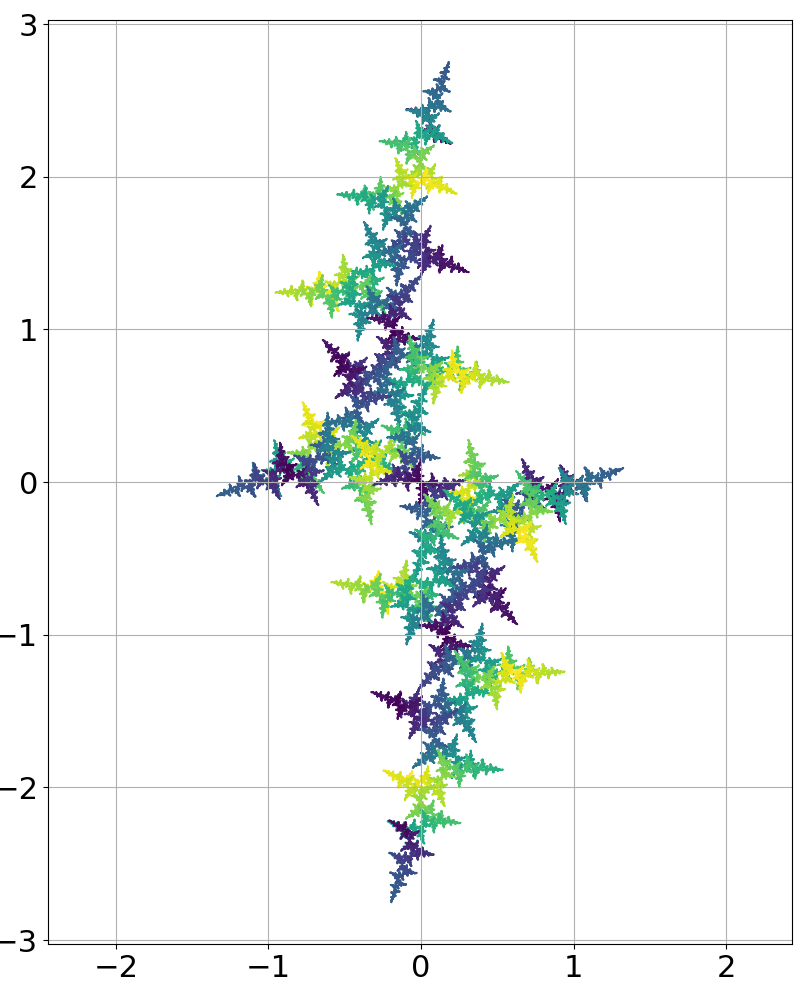}
\includegraphics[width = .3\textwidth]{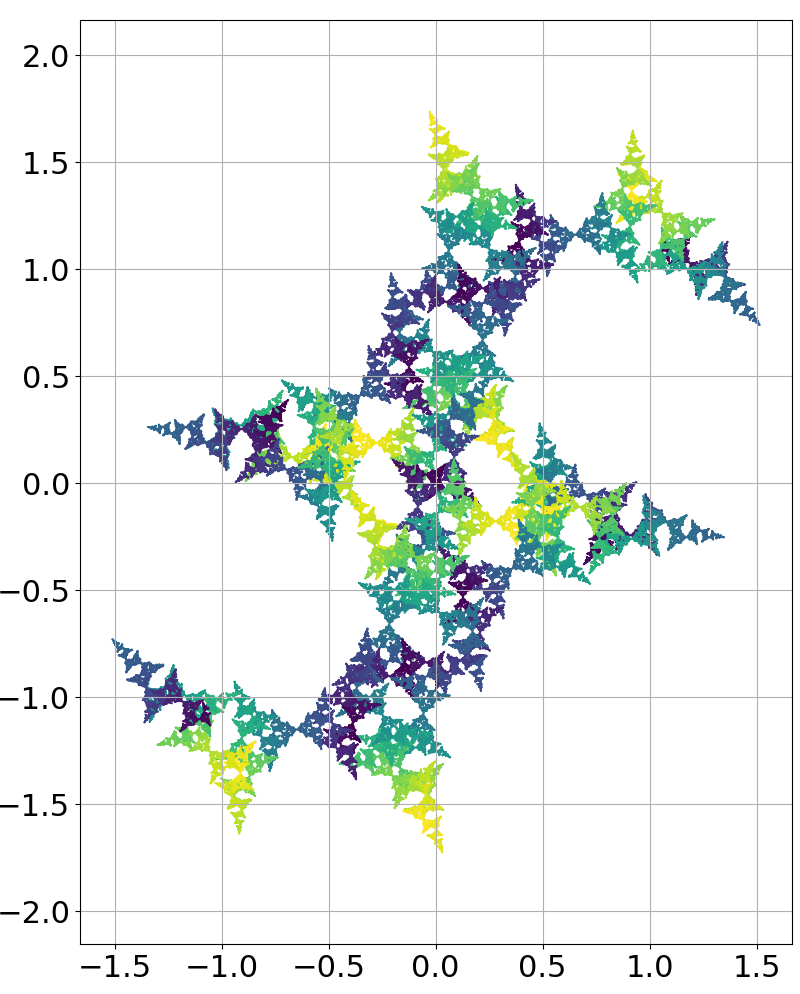}
\includegraphics[width = .3\textwidth]{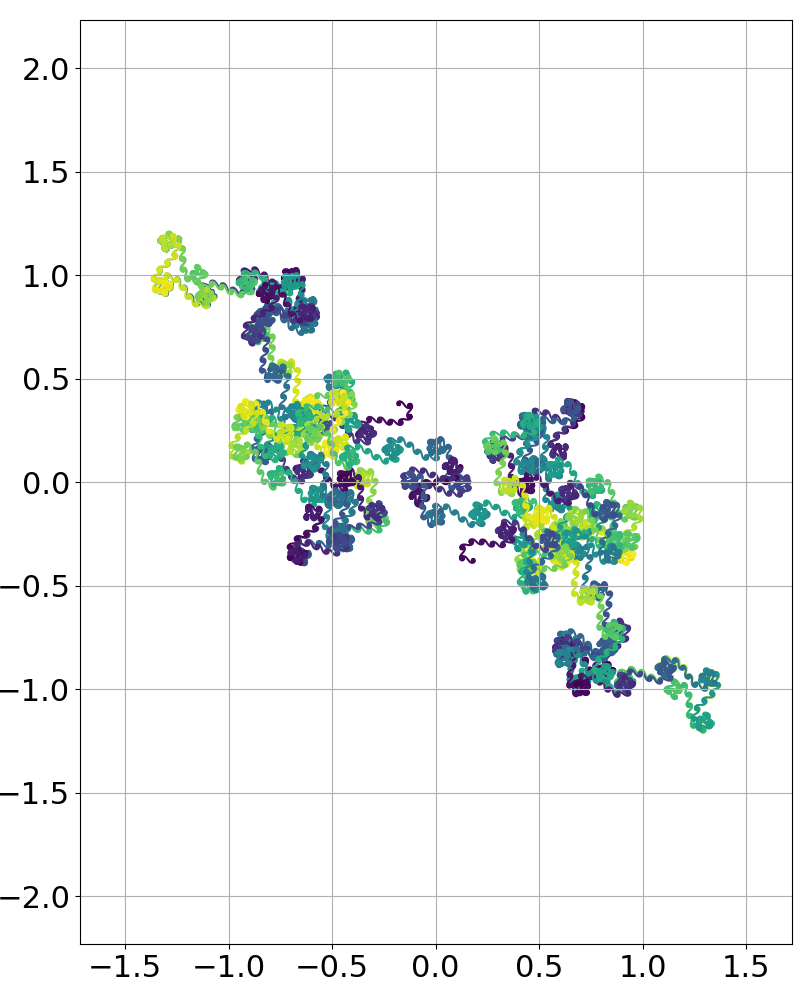}
\caption{Values of $\Pi_\alpha(X)$ in the complex plane for $X \in [-3, 3]$ and, from left to right, $\cot \frac{\pi\alpha}{2} = \frac{4181}{2584}\approx\frac{1+\sqrt{5}}{2}, \frac{8119}{5741} \approx \sqrt{2}$, and $\frac{103993}{33102} \approx \pi$. Colors indicate varying $X$.\label{fig_traces_multiple}}
\end{figure}

To the naked eye, it certainly seems that there is a limit $\Pi_{1/3}(X)$ which is a continuous but not differentiable function whose weak derivative gives $\mathcal{F}^{1/3}\comb_1(x)$.

In Figure \ref{fig_traces_multiple} we draw the values of $\Pi_\alpha(X)$ for similar approximations to cotangents $\frac{1}{2}(1+\sqrt{5})$, $\sqrt{2}$, and $\pi$. The square roots (with their repeating continued fractions) appear to have striking self-similar polygonal structure, and the graph of the values corresponding to $\pi$ seems to contain Euler spirals on several scales (which seem to correspond to large denominators in the continued fraction expansion).

Though we know that these functions are neither even nor odd around any point (Proposition \ref{prop_parity}), it certainly appears that there are many $x$ around which these approximations are nearly and locally even or odd.

One could also wonder whether the $\Pi_\alpha$ is Hausdorff continuous (if it is a function), whether the image has Hausdorff dimension larger than one as a subset of $\Bbb{C}$, whether these functions are nearly periodic and in what sense, whether parts of these curves are self-similar under scaling, what the behavior is as $x \to \infty$, and on and on.

At present, the author has very few answers. The questions seem quite natural, nontrivial, and, in the author's opinion, beautiful. The author hopes that, through further study or through exchange with other areas of research, more information will come to light.

\bibliographystyle{plain}
\bibliography{Brushes}

\end{document}